\newtheorem {prop}{Proposition} [section] 
\newtheorem {thm}[prop]{Theorem}
\newtheorem {cor}[prop]{Corollary}
\newtheorem{lem}[prop]{Lemma}
\theoremstyle{definition}
 \newtheorem {rk}[prop]{Remark}
\newtheorem {df}[prop]{Definition}
\newtheorem {ex}[prop]{Example}
\newtheorem {notation}[prop]{Notation}
\newcommand{\Q} {\mathbb{Q}}
\newcommand{\R} {\mathbb{R}}
\newcommand{\N} {\mathbb{N}}
\newcommand{\C} {\mathcal{C}}
\newcommand{\eps}{\varepsilon}
\newcommand{\A}{\mathcal{A}}
\newcommand{\U}{\mathcal{U}}
\newcommand{\F}{\mathcal{F}}
\newcommand{\hk}{(H_k;\lambda_k)_{1 \leq k \leq b}}
\newcommand{\pa}{\partial}
\author{Leonid Shartser}
\address
{Department of Mathematics, University of Toronto, 40 St. George
st, Toronto, ON, Canada M5S 2E4 }
\email{shartl@math.toronto.edu}
\keywords{}
\thanks{}
\subjclass{14P10, 14P25, 55N20}
\begin{document}
\title[On $L^p$ inequality of a semialgebraic set ]{On $L^p$ inequality for differential forms and $L^p$ cohomology of a semialgebraic set for $p>>1$}
\maketitle
\begin{abstract}
We study  Poincar\'e type $L^p$ inequality on a compact semialgebraic subset of 
$\R^n$ for $p>>1$. First we derive a local inequality by using a Lipschitz
deformation retraction with estimates on its derivatives. Then, we extend
the local inequality to a global inequality by employing double complex technique. 
As a consequence we obtain an isomorphism between $L^p$ cohomology and
singular cohomology of a normal compact semialgebraic pseudomanifold.
\end{abstract}

\setcounter{tocdepth}{1}
\tableofcontents

\section{ Introduction }
Let $X\subset\R^n$ be a compact semialgebraic set and $\omega$ a smooth $k$-form on $X_{reg}$, the regular part of $X$.  We say that  $\omega$  is $L^p$ bounded when
$$ \| \omega \|_{L^p} := \left(\int_{X_{reg}} |\omega(x)|^p dVol(x) \right)^{1/p} <\infty\ .$$
Assume $\omega$ is a closed $L^p$ bounded smooth $k$-form on $X_{reg}$.
We prove that if all of the integrals of $\omega$  on the cycles in $X$ vanish
(see Section \ref{singular_case_2} for the precise definition of an integral of an $L^p$ bounded form on a cycle in $X$)  then there exists a smooth $(k-1)$-form $\xi$ such that $\omega= d\xi$ and, moreover, 
\begin{equation} \label{eq11}
\| \xi \|_{L^p(X)} \leq C \| \omega \|_{L^p(X)}
\end{equation}
holds for $p>>1$, where $C$ depends only on the set $X$ and $p$. 
Of course for a contractible semialgebraic set $X$, 
there are no cycles in $X$. Consequently there is a $\xi$ such that $\omega=d\xi$ and inequality (\ref{eq11}) holds on $X$ for $p>>1$.

In \cite{S} we proved a generalization of inequality  (\ref{eq11}), but on compact manifolds. 
Namely, we constructed for every smooth exact $k$-form $\omega$ on a compact Riemannian manifold 
$M$, $\dim M =n$, a smooth $(k-1)$-form $\xi$ on $M$ such that $ \omega = d\xi$ and inequality
\begin{equation} \label{eq12}
\| \xi \|_{L^p(M)} \leq C \| d\omega \|_{L^q(M)}
\end{equation}
holds for $p$ and $q$ in the standard range (i.e. $p<q$ or $p\geq q$ and $\frac{1}{q}-\frac{1}{p}<\frac{1}{n}$) with a positive
constant $C$ depending only on $p$, $q$, $k$ and the manifold $M$.
We proved (\ref{eq12}) first on a convex set following the arguments of Lemma 3.11 of \cite{BoMi} and then derived the global version by means of a method (suggested to us by P. Milman) based on the Weil's double complex. The local version of (\ref{eq12}) appeared in \cite{IwLu}.
In this article we will make use of the 'globalization' method of the proof of (\ref{eq12}) in \cite{S}.

The main difficulty in extending the proof of (\ref{eq12}) to a neighborhood of a point in a set with singularities is that we can no longer connect any two points by a straight line that lies
entirely in the set.

To overcome this difficulty we make use of a Lipschitz deformation retraction $r$ to a single point with estimates on its derivatives, namely:

\smallskip
\noindent
{\bf Theorem \ref{lip_retraction_main}}{\it\ (Lipschitz deformation retraction theorem)
Let $\Sigma_0$ be a stratification of $\R^n$, $X=\cup X_j$, $X_j\in\Sigma_0$, $0\in\overline X_j\cap X$, $j=1,\dots,m$.
There exist a stratified neighborhood $(U,\Sigma_U)$ of $0$ in $\R^n$ with $\Sigma_U$ a cell subdivision such that $\Sigma_U\prec\Sigma\cap U$ and a Lipschitz semialgebraic deformation retraction $r:U\times I\to U$, 
such that 
}
\begin{enumerate}
\item $r_0(x)=0$, $r_1(x)=x$ ,
\item $r|_{S\times(0,1]}$ is smooth ,
\item $|\det Dr_t|\gtrsim t^\mu$, for some $\mu\geq 0$,
\item $\|Dr_t \|\lesssim t^\lambda$ for some $\lambda>0$,
\end{enumerate}
{\it
where $r_t(x):=r(x,t)$, $Dr_t$ is the tangent map of $r_t$ and
$\|Dr_t \|$ denotes the operator max-norm of the tangent map.
}
\smallskip

By means of the latter we define a homotopy operator $R$ such that for a closed form $\omega$ we have 
$\omega=d R\omega$ and our estimates on the derivatives of the deformation retraction $r$ allow us 
to prove that for $p>>1$ our homotopy operator $R$ is an $L^p$ bounded operator. 
Consequently we conclude that $\xi:=R\omega$ is the solution to inequality (\ref{eq11})
on a neighborhood of a point in $X$.

Fortunately, 'globalization' of the local $L^p$ inequality (\ref{eq11}) 
to a semialgebraic set can be carried out essentially just like for a smooth manifold in \cite{S}. The basic two facts that are needed
for proving this global version is 
the validity of the local version of inequality (\ref{eq11}) and the existence
of a partition of unity (with locally bounded differentials), which semialgebraic sets admit. 


One of the most important applications of the local version of inequality (\ref{eq11}) is in 
the theory of $L^p$ cohomology on semialgebraic sets. 
To define $L^p$ cohomology we consider a differential complex
consisting of the $L^p$ bounded forms with $L^p$ bounded weak exterior derivatives on the regular part of the set in question. $L^p$ cohomology is defined as the factor space of closed $L^p$ bounded forms by the exact $L^p$ bounded forms. 
Of course for compact semialgebraic sets, $L^p$ cohomology is an invariant of 
the induced metric. 
But the question of finiteness of the latter in general (for any $p\leq\infty$) was open.
The $L^p$ cohomology theory is addressed  
in several special cases by various authors (see e.g. \cite{Ch},\cite{Y},\cite{HP},\cite{GKS},\cite{GKS2},\cite{GKS3},\cite{Gr}).

In this article we show that as a consequence of inequality (\ref{eq11}) 
 $L^p$ Poincar\'e lemma is valid for $p>>1$ and hence $L^p$ cohomology coincides with
the singular cohomology of a compact (normal) semialgebraic set.

\subsection{Organization of the article}
In Section \ref{singular_case} we prove a local inequality of the form of inequality (\ref{eq11}) for smooth $L^p$ bounded forms in a neighborhood of a point in a semialgebraic set.

In Section \ref{Lp_cohomology1} we give an application of the local $L^p$ inequality to 
$L^p$ cohomology of a compact normal semialgebraic set $X$. We show that for $p>>1$ the $L^p$
cohomology of such sets coincides with the singular cohomology
by means of a sheaf theoretic argument.

In Section \ref{singular_case_2} we extend the local inequality to a global inequality on 
compact semialgebraic sets. We show that under certain conditions, which we express combinatorially, closed $L^p$ bounded forms satisfy (\ref{eq11}) for $p>>1$.

Section \ref{sec_Lip_retract} is introduction to the construction of the Lipschitz deformation 
retraction with estimates on derivatives. 

In Section \ref{sec_reg_fam} we introduce some technical material needed for our construction of 
Lipschitz deformation retraction.

Finally, in Section \ref{sec_Lip_ret} we prove the Lipschitz deformation retraction Theorem \ref{lip_retraction_main}.

\vspace*{4mm}
\noindent \textbf{Acknowledgment. }
I would like to thank P. Milman for posing the questions and for contributing
many fundamental ideas that were used in this work.
I would also like to thank G.Valette for teaching me his invaluable Lipschitz geometry techniques.
\vspace*{2mm}

\section{Notations and basic definitions}\label{sec_defs}

Let $X\subset\R^n$  be a semialgebraic set. Denote by $X_{reg}$ the subset of $X$ consisting 
of points where $X$ is a smooth manifold and set $X_{sing}:=X-X_{reg}$. Denote by $\overline {X}$ the closure of $X$ and by $\text{bd }X$ the topological boundary of $X$.
\begin{itemize}

\item $(\Omega^\bullet(X_{reg}),d)$ denotes the complex of smooth $k$-forms on $X_{reg}$ and 
with exterior derivative $d:\Omega^k(X_{reg})\to\Omega^{k+1}(X_{reg})$.

\item For a form $\omega\in\Omega^k(X_{reg})$ define the $L^p$ norm by
$$ \| \omega \|_{L^p} := \left(\int_{X_{reg}} |\omega(x)|^p dVol(x) \right)^{1/p} <\infty,$$  
where $|\omega(x)|$ is the pointwise norm of $\omega$ at the point $x\in X_{reg}$
defined by 
$$ \sup_{v\in\wedge^{k}(X_{reg})}\frac{|\omega(x;v)}{|v|} .$$

\end{itemize}

Suppose that $X_{reg}$ is of dimension $n$ and $\omega\in \Omega^k_{L^p}(X_{reg})$. 
A form $\gamma\in\Omega^{k+1}_{L^p}(X_{reg})$ is said to be
{\bf the weak exterior derivative} of $\omega$ if for every point $p\in X_{reg}$ there
exists a neighborhood $U$ such that for every smooth $(n-k-1)$-form $\phi$ supported
in $U$ we have 
$$ \int_{U} \omega\wedge d\phi = (-1)^{k+1}\int_{U} \gamma \wedge \phi.$$
The weak exterior derivative of $\omega$ is denoted by $\overline d\omega$.


%
%
%

\section{Local $L^p$ inequality on a semialgebraic set}\label{singular_case}
Let $X\subset \R^n$ be a compact semialgebraic set with $a\in X$.
Denote by $(\Omega^\bullet_{L^p}(X_{reg}),\overline d)$ the complex of $L^p$ bounded forms with $L^p$ bounded weak exterior derivatives, i.e., forms $\omega$ with 
$$ \|\omega \|_{L^{p,1}}:=\|\omega \|_{L^{p}} + \| \overline d\omega\|_{L^p} <\infty. $$
We say that $X$ admits a {\bf local $L^p$ estimate} near $a$ if 
there is a neighborhood $U$ of $a$ in $X$ such that 
for every closed smooth $L^p$ bounded $k$-form $\omega$, $k\geq 1$, defined in $U$
 there is a smooth form $\xi$, defined in $U$,  such that 
\begin{equation}\label{loc_ineq}
\left\{
\begin{array}{lc}
 \omega=d\xi\ \ \text{in $U$}, \\ \|\xi\|_{L^p(U)}\leq C \| \omega \|_{L^p(U)}
\end{array}
\right.
\end{equation}
where $C>0$ is independent of $\omega$.

We prove in this section that $X$ admits local $L^p$ estimate for $p>>1$.
The main technical tool is  our Lipschitz deformation retraction Theorem \ref{lip_retraction_main}.

\subsection{Homotopy Opertator.}
Let $(U,\Sigma)$ be a stratified neighborhood and $r : U\times I \to U$, $I:=[0,1]$, be the Lipschitz semialgebraic deformation retraction obtained by applying Theorem \ref{lip_retraction_main} to the set $X$ and any stratification 
of $\R^n$ that is compatible with $X$.
Let $\eps>0$. We associate a homotopy operator $R_{\eps}$ with the deformation retraction $r$  as follows:\\
Let $\alpha$ be an $L^p$ bounded smooth $k$-form on $X_{reg}$. The pull back $r^*\alpha$ 
is a form on $U\times I$ and can be represented as $\alpha_0+dt\wedge\alpha_1$
where $t$ is the coordinate in $I$. 
Define an operator 
$$P:\Omega^{k}_{L^p}(U)\to \Omega^{k-1}_{L^p}(U\times I),\ \ P\alpha:=\alpha_1.$$
Set 
$$ R_\eps\alpha := \int_\eps^1 \alpha_1(x,t)dt .$$

Observe that $R_{\eps}\alpha$ is defined almost everywhere on every stratum of $\Sigma$ that is contained in $U$.
Next we show that $R_\eps$ is an $L^p$ bounded operator (for $p$ large enough) and therefore $R_\eps \alpha$ defines an element in $L^p$.
We will need the following lemma.
\begin{lem}
Suppose that $S\subset\R^n$ is a locally closed oriented submanifold of dimension $k$ and 
$\phi:D\to S$ is a bi-Lipschitz diffeomorphism from an open and bounded domain $D\subset\R^k$.
Then, 
$$ \int_S f(x) dVol(x) \sim \int_D f(\phi(x))dx_1\dots dx_k \text { for any } f:S\to \R\  ,$$
where $dVol(x)$ is the volume form on $S$ and $x_1,\dots,x_k$ are coordinates in $D$.
\end{lem}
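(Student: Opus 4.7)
The plan is to reduce the claim to the area (change of variables) formula for Lipschitz maps and then bound the Jacobian uniformly from above and below using the bi-Lipschitz hypothesis.

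First, by Rademacher's theorem the bi-Lipschitz parametrization $\phi:D\to S\subset\R^n$ is differentiable at almost every $u\in D$, and $\phi^{-1}$ is differentiable almost everywhere on $S$. Write $L$ for a Lipschitz constant of $\phi$ and $L'$ for one of $\phi^{-1}$. The classical area formula (e.g.\ Federer) then gives
\begin{equation*}
\int_S g(x)\,dVol(x) \;=\; \int_D g(\phi(u))\,J\phi(u)\,du_1\cdots du_k
\end{equation*}
for every nonnegative measurable $g:S\to\R$, where $J\phi(u):=\sqrt{\det\bigl(D\phi(u)^{T}D\phi(u)\bigr)}$ is the $k$-dimensional Jacobian.

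Next I would bound $J\phi$ pointwise. At any differentiability point $u$, the map $D\phi(u):\R^k\to T_{\phi(u)}S$ is a linear isomorphism between $k$-dimensional Euclidean spaces with $\|D\phi(u)\|\leq L$ and $\|D\phi(u)^{-1}\|\leq L'$. Hence every singular value $\sigma_i$ of $D\phi(u)$ lies in $[1/L',\,L]$, so
\begin{equation*}
(1/L')^k \;\leq\; J\phi(u)\;=\;\prod_{i=1}^k \sigma_i \;\leq\; L^k\qquad\text{for a.e. } u\in D.
\end{equation*}
Substituting these uniform bounds into the area formula (applied to $g=|f|$, which is the correct interpretation of the symbol $\sim$ here, or directly to $g=f$ when $f\geq 0$) immediately yields
\begin{equation*}
(1/L')^k \int_D |f(\phi(u))|\,du \;\leq\; \int_S |f(x)|\,dVol(x) \;\leq\; L^k \int_D |f(\phi(u))|\,du,
\end{equation*}
which is the desired equivalence of integrals.

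The only delicate point is the applicability of the area formula to a bi-Lipschitz parametrization of a smooth submanifold of $\R^n$, and this is classical; boundedness of $D$ trivially ensures integrability for the $f$'s that arise in the intended applications. So there is no real obstacle beyond invoking the standard tools in the correct order.
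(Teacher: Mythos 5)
Your proof is correct and follows essentially the same route as the paper's: both write the $k$-dimensional volume element on $S$ as $\sqrt{\det(D\phi^{T}D\phi)}\,dx_1\cdots dx_k$ (the paper phrases this as the Gram determinant $\sqrt{\det\langle\partial/\partial x_i,\partial/\partial x_j\rangle}$, which is the same quantity) and then observe that the bi-Lipschitz hypothesis forces this Jacobian factor to be uniformly bounded above and below, hence equivalent to a constant. You are somewhat more careful than the paper: you make the two-sided bound explicit via singular values $\sigma_i\in[1/L',L]$, giving $J\phi\in[(1/L')^k,L^k]$, whereas the paper merely asserts that the density is ``bounded and non vanishing'' (a phrasing that on its own would not give a uniform lower bound on a non-compact open domain, though bi-Lipschitzness does). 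Your invocation of Rademacher and the Lipschitz area formula is unnecessary overhead given that the hypothesis already calls $\phi$ a diffeomorphism, but it does no harm and would cover the slightly more general merely bi-Lipschitz case.
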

\begin{proof}
In coordinates $x=(x_1,\dots,x_k)$ on $D$ and $y=(y_1,\dots,y_n)$ on $\R^n\supset S$ the description of $\phi$ is 
$ y_1 = \phi_1(x),\dots,y_n=\phi_n(x)$.
Also 
$$ \frac{\pa}{\pa x_i}:=\sum_{j} \frac{\pa y_j}{\pa x_i}\frac{\pa}{\pa y_j}\in T_y S\subset\R^n, \ 1\leq i \leq k, $$
is a basis of tangent vectors to $S$ at the point $y=\phi(x)$. 
Thus the volume form $dVol(x)$ in the induced from $\R^n$ Riemannian metric on $S\subset\R^n$ can be written as 
$$ dVol(x) = \sqrt{\det \left<\frac{\pa }{\pa x_i},\frac{\pa }{\pa x_j}\right>_{i,j}}dx_1\dots dx_k\ ,$$
where $\left<\cdot,\cdot \right>$ is the standard scalar product in $\R^n$.
Since $\phi$ is bi-Lipschitz, the 'volume density' function $\sqrt{\det \left<\frac{\pa }{\pa x_i},\frac{\pa }{\pa x_j}\right>_{i,j}}$ is bounded and is non vanishing. Therefore this function is 'equivalent' to a constant. 
\end{proof}
\begin{thm}(Local $L^p$ inequality Theorem)\label{thm_loc_lp_ineq}
Suppose that $\omega$ is a smooth $L^p$ bounded $k$-form, $k\in\N$, defined on $X_{reg}$ near $0\in X$. 
Then there is a neighborhood $U$ of $0\in X$ 
such that 
\begin{enumerate}[(i)]
\item $\| R_\eps \omega\|_{L^p(U)}\leq C \| \omega \|_{L^p(U)}$ ,
\item $R_\eps\omega \to R_0 \omega $ in $L^p$,
\item $\| r^*_\eps \omega\|_{L^p(U)}\to 0 $ as $\eps\to 0$.
\end{enumerate}
where $p>>1$ and $C>0$ depend only on the set $X$.
\end{thm}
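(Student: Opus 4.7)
The plan is to use the pointwise bounds (3)–(4) from Theorem \ref{lip_retraction_main} to reduce everything to one-dimensional integrals in the time variable $t$ that converge for $p \gg 1$. All integrations are performed stratum by stratum: on each top-dimensional stratum $S \subset X_{reg}\cap U$ the preceding lemma identifies $L^p(S)$ with Lebesgue $L^p$ on a bi-Lipschitz model domain $D\subset\R^k$, up to a uniform multiplicative constant, so the analysis becomes a classical Fubini/Minkowski computation on Euclidean domains.

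\textbf{Pointwise estimate on $\alpha_1$.} Contracting $r^*\omega = \alpha_0 + dt\wedge\alpha_1$ with $\partial_t$ gives
$$\alpha_1(x,t)(v_1,\ldots,v_{k-1}) \;=\; \omega(r_t(x))\bigl(\partial_t r(x,t),\, Dr_t v_1,\ldots,Dr_t v_{k-1}\bigr).$$
Since $r$ is Lipschitz we have $|\partial_t r|\lesssim 1$, and by property (4) $\|Dr_t\|\lesssim t^\lambda$, so
$$|\alpha_1(x,t)|\;\lesssim\; t^{\lambda(k-1)}\,|\omega(r_t(x))|,\qquad |R_\eps\omega(x)|\;\lesssim\; \int_\eps^1 t^{\lambda(k-1)}|\omega(r_t(x))|\,dt.$$

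\textbf{Proof of (i) and (ii).} Applying Minkowski's integral inequality to the right-hand side and then changing variables $y=r_t(x)$ on each stratum — using property (3), $|\det Dr_t|\gtrsim t^\mu$, and a uniform bound on the multiplicity of $r_t$ restricted to each top-dimensional stratum — yields
$$\int_U |\omega(r_t(x))|^p\,dVol(x)\;\lesssim\; t^{-\mu}\,\|\omega\|_{L^p(U)}^{\,p},$$
and hence
$$\|R_\eps\omega\|_{L^p(U)}\;\lesssim\;\|\omega\|_{L^p(U)}\,\int_\eps^1 t^{\,\lambda(k-1)-\mu/p}\,dt.$$
The integrand is integrable on $[0,1]$ as soon as $p>\mu/(\lambda(k-1)+1)$, which gives (i) with $C$ independent of $\eps$. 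The very same bound applied to the difference $R_\eps\omega-R_0\omega=-\int_0^\eps\alpha_1\,dt$ forces its $L^p$-norm to go to zero as $\eps\to 0$, proving (ii).

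\textbf{Proof of (iii) and main obstacle.} Directly, $|r_\eps^*\omega(x)|\lesssim \|Dr_\eps\|^{k}|\omega(r_\eps(x))|\lesssim \eps^{\lambda k}|\omega(r_\eps(x))|$, and the same change of variables yields
$$\|r_\eps^*\omega\|_{L^p(U)}\;\lesssim\;\eps^{\,\lambda k-\mu/p}\,\|\omega\|_{L^p(U)},$$
which tends to $0$ as $\eps\to 0$ provided $p>\mu/(\lambda k)$. Choosing $p$ larger than both threshold values supplies the single $p\gg 1$ required by the theorem. The delicate step in all three claims is the change of variables: because $r_t$ collapses $U$ onto $\{0\}$ as $t\to 0$, it cannot be globally injective, so one has to invoke the area formula and verify that $r_t$ restricted to each top-dimensional stratum takes values in a stratum of the same dimension with uniformly bounded multiplicity. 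This is where the semialgebraicity of $r$ combined with the non-degeneracy (3) of its Jacobian is essential, and it is the main technical point of the proof.
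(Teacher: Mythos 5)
Your argument matches the paper's proof step for step: the same pointwise estimate $|\alpha_1(x,t)|\lesssim t^{\lambda(k-1)}|\omega(r_t(x))|$ from the bound $\|Dr_t\|\lesssim t^\lambda$, the same Minkowski interchange, and the same change of variables $z=r_t(x)$ controlled by $|\det Dr_t|\gtrsim t^\mu$, arriving at the same integrability thresholds on $p$ for (i)--(iii). One clarification on the concern you flag as the main technical point: for each fixed $t\in(0,1]$ the map $r_t$ is in fact injective on every stratum, since it is built by iterated standard lifts (Definition \ref{std_lift}, where $\tau(r_t(q))=\tau(q)$) of the one-dimensional base retraction $r'_t(x)=tx$, so the multiplicity is one and the paper's plain change of variables is legitimate without invoking the area formula; it is only the limit map $r_0$ that collapses $U$ to a point.
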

\begin{proof}
Let $(U,\Sigma)$ be a stratified neighborhood of $0\in X$ and $r:U\times I\to U$ be a Lipschitz deformation retraction given by Theorem \ref{lip_retraction_main}. Clearly it is enough to prove Theorem \ref{thm_loc_lp_ineq} for every $S\in \Sigma$ of dimension $\dim X$ which 
is a stratum contained in $U$. So let $S$ be such a stratum.
Let $\omega_1:=P\omega$. Then
\begin{eqnarray*}
\|R_\eps\omega \|_{L^p(S)} &=& \|\int_\eps^1 \omega_1(x,t) dt \|_{L^p(S)}\ .
\end{eqnarray*}
Note that $\omega_1(x,t;\cdot) =r^*\omega(x,t; {\frac{\pa }{\pa t}},\cdot )$ or equivalently, for every
$v\in\wedge^{k-1}(\R^n)$ 

$$ \omega_1((x,t);v)= \omega(r(x,t); r_*\frac{\pa}{\pa t}\wedge r_*v)\ , $$
holds, where $r_*$ denotes the push forward map of the deformation retraction $r$.

According to Theorem \ref{lip_retraction_main} there is $\lambda>0$ such that an upper bound $\|Dr_t\|\lesssim t^{\lambda}$ holds. 
It follows that 
\begin{eqnarray*}
\left|\omega_1(x,t) \right| &=& \sup_{|v|=1}\left|\omega(r(x,t);r_*{\frac{\pa }{\pa t}}\wedge r_*v)\right|\\
&\leq&
\left|\omega(r(x,t)\right| \sup_{|v|=1}\left|r_*{\frac{\pa }{\pa t}}\wedge r_*v)\right|\\
&\leq& C\left|\omega(r(x,t)\right| \| Dr_t \|^{k-1}\\ 
&\leq& C\left|\omega(r(x,t)\right| t^{(k-1)\lambda}\ .
\end{eqnarray*} 
Consequently 
\begin{eqnarray*}
\|\int_\eps^1 \omega_1(x,t) dt \|_{L^p(S,dx)} &\lesssim&  
\|\int_\eps^1 t^{(k-1)\lambda}|\omega(r(x,t))| dt \|_{L^p(S,dx)} \\
&\leq&
\int_\eps^1 \| |\omega(r(x,t))|  \|_{L^p(S,dx)} t^{(k-1)\lambda}dt\\
  &=& \int_\eps^1 \| |\omega(z)| \left| \frac{\pa x}{\pa z}\right|^{\frac{1}{p}}\|_{L^p(S,dz)} t^{(k-1)\lambda}dt\\
\end{eqnarray*}
and since $\left|\frac {\pa z}{\pa x}\right| \gtrsim t^{\mu}$ for $z:=r(x,t)$, the upper bound on the latter is 
\begin{eqnarray*}
  &\leq& \int_\eps^1 t^{-\frac{\mu}{p}+{(k-1)\lambda}}\| |\omega(z)| \|_{L^p(S,dz)} dt\\
  &=& \frac{p}{p\left(1+(k-1)\lambda\right)-\mu}\left(1-\eps^{-\frac{\mu}{p}+{(k-1)\lambda}+1}\right)
  \|\omega\|_{L^p(S)}\ .                           
\end{eqnarray*}

Therefore, for any $p>\frac{\mu}{1+(k-1)\lambda}$ and any $\eps\geq 0$ the homotopy operator $R_\eps$ is a bounded operator between the $L^p$ spaces of differential forms.

%
%
%
%
For part $(ii)$, we have to show that if $\eps,\eps'$ are small then 
$\| R_\eps\omega - R_{\eps'}\omega \|_{L^p(S)}$ is small.
The same type of computation as in the previous paragraph (replacing integration from $\eps$ to $1$
by integration from $\eps'$ to $\eps$) implies part $(ii)$.

To prove part $(iii)$ we will make use of
\begin{eqnarray*}
\| r^*_\eps\omega \|^p_{L^p(S)} &=& \int_{S} |r^*_\eps\omega|^p dx \\
 &\leq& \int_{S} |\omega(r_\eps(x))|^p \|Dr_\eps\|^{pk} dx \\
 &\leq& \int_{S} |\omega(r_\eps(x))|^p \eps^{pk\lambda} dx \\
\end{eqnarray*}
and with $z=r_\eps(x)$ the upper bound for the latter is
\begin{eqnarray*}
 &\leq& \int_{S} |\omega(z)|^p \left|\frac{\pa x}{\pa z}\right|\eps^{pk\lambda} dz \\
 &\lesssim& \int_{S} |\omega(z)|^p \eps^{-\mu}\eps^{pk\lambda} dz \\
\end{eqnarray*}
Therefore $p>\frac{\mu}{k\lambda}$ implies $\| r^*_\eps \omega \|_{L^p} \to 0 $ as $\eps \to 0$,
as required.
%
%
%
%
%
%
%

\end{proof}

Next, we show that $R_\eps$ satisfies the classical homotopy identity.

\begin{prop}\label{homotopy_prop}
Suppose that $V$ is a stratified neighborhood of $0\in X$ provided by Theorem \ref{lip_retraction_main} and consequently the operator $R_\eps$ is defined on $V$.
The homotopy operator $R_\eps$ satisfies the following homotopy identity:
$$ \overline{d}R_\eps\alpha + R_\eps d\alpha=\alpha -r_\eps^*\alpha $$
for any smooth $L^p$ bounded form  $\alpha$ defined on $U:=X_{reg}\cap V$. 
\end{prop}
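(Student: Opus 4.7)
The proposition is the stratified, Lipschitz version of the classical de Rham homotopy identity for a smooth homotopy. I would carry out the standard Cartan-type computation on each top-dimensional stratum of $\Sigma_U$ where $r$ is smooth, and then promote the resulting pointwise identity to a weak one on $X_{reg}\cap V$.

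\textbf{Pointwise identity on smooth strata.} Fix a top-dimensional stratum $S\subset X_{reg}\cap V$ of $\Sigma_U$. By property (2) of Theorem \ref{lip_retraction_main}, $r$ is smooth on $S\times(0,1]$ and in particular on every compact slab $S\times[\eps,1]$. Decompose the pullback as $r^*\alpha=\alpha_0+dt\wedge\alpha_1$ with $\alpha_0,\alpha_1$ free of $dt$; then $\alpha_0(x,t)=r_t^*\alpha(x)$, so $\alpha_0(\cdot,1)=\alpha$ and $\alpha_0(\cdot,\eps)=r_\eps^*\alpha$. Splitting $d=d_x+dt\wedge\partial_t$ and comparing the $dt$-components of $d\circ r^*=r^*\circ d$ yields the pointwise relation $P(d\alpha)=\partial_t\alpha_0-d_x\alpha_1$. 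Smoothness of $r$ on $S\times[\eps,1]$ legitimizes differentiation under the integral sign, so
\begin{equation*}
 d(R_\eps\alpha)+R_\eps(d\alpha)=\int_\eps^1\!\big(d_x\alpha_1+\partial_t\alpha_0-d_x\alpha_1\big)\,dt=\alpha_0\big|_{t=1}-\alpha_0\big|_{t=\eps}=\alpha-r_\eps^*\alpha
\end{equation*}
holds pointwise on every such stratum $S$.

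\textbf{From pointwise to weak.} The union of the top-dimensional strata is an open dense subset of full $\hn^n$-measure in $X_{reg}\cap V$. Testing both sides against a smooth $(n-k)$-form $\phi$ compactly supported in $X_{reg}\cap V$ and invoking classical Stokes stratum by stratum produces the desired integration-by-parts relation
\begin{equation*}
 \int_{X_{reg}\cap V}R_\eps\alpha\wedge d\phi=(-1)^k\int_{X_{reg}\cap V}\big(\alpha-r_\eps^*\alpha-R_\eps d\alpha\big)\wedge\phi;
\end{equation*}
the boundary contributions concentrate along lower-dimensional strata of $\Sigma_U$ of ambient codimension $\geq 1$ and cancel pairwise, using the Lipschitz continuity of $r$ (and hence the continuity of $R_\eps\alpha$) across common faces. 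Since all three forms on the right are in $L^p(X_{reg}\cap V)$ by Theorem \ref{thm_loc_lp_ineq}, this display is exactly the defining relation $\overline{d}R_\eps\alpha=\alpha-r_\eps^*\alpha-R_\eps d\alpha$.

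\textbf{Main obstacle.} The Cartan calculation itself is the same as in the classical smooth case, once the cushion $t\geq\eps$ is in place. The substantive issue is the passage from the stratum-wise pointwise identity to the global weak identity: one has to check that the failure of $r$ to be smooth at lower-dimensional strata contributes nothing to the integration by parts. The restriction $t\geq\eps$ confines the singular locus of $r$ in the slab to directions transverse to time only, and the codimension-$\geq 1$ position of those strata in $X_{reg}\cap V$ combined with the Lipschitz continuity of $r$ reduces this to a standard measure-theoretic cancellation argument.
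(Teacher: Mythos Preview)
Your stratum-wise Cartan computation is correct and coincides with the identity the paper uses. The gap is in the passage ``from pointwise to weak''. You write that the boundary contributions along codimension-one faces ``cancel pairwise, using the Lipschitz continuity of $r$ (and hence the continuity of $R_\eps\alpha$) across common faces.'' But Lipschitz continuity of $r$ does \emph{not} yield continuity of $R_\eps\alpha$: the coefficients of $\alpha_1(x,t;v)=\alpha(r(x,t);r_*\pa_t\wedge r_*v)$ involve the first derivatives of $r$, and for a merely Lipschitz map these are only $L^\infty$, not continuous across strata. Hence the traces of $R_\eps\alpha$ from the two sides of a shared face need not agree, and the pairwise cancellation of the Stokes boundary terms is unjustified. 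Since these faces have codimension one in $U$, they are not removable for the weak derivative, so the a.e.\ pointwise identity does not by itself imply the weak one.

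The paper circumvents this by working on the product $U\times[\eps,1]$ rather than stratum by stratum in $U$. It passes to $\int_{U\times[\eps,1]}\alpha_1\,dt\wedge d\phi$ by Fubini and then applies the Stokes formula for stratified $L^\infty$ forms from \cite{SV}: the topological boundary consists of $(\pa U)\times[\eps,1]$, where $\phi$ vanishes, and $U\times\{\eps,1\}$, where the integrand contains $dt$ and hence vanishes; the internal stratum interfaces are absorbed by that Stokes theorem rather than by any continuity of $\alpha_1$. For the remaining identity $\int_{U\times[\eps,1]}\pa_t\alpha_0\,dt\wedge\phi=\int_U(\alpha-r_\eps^*\alpha)\wedge\phi$, the paper further refines to a cell decomposition of $U\times[\eps,1]$ into bands $\{\eta_{i,j}(x)\le t\le\eta_{i,j+1}(x)\}$ on which $r^*\alpha$ is smooth, and telescopes the fundamental theorem of calculus over these bands. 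Both steps replace your continuity-across-faces claim by the stratified-form machinery of \cite{SV}.
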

\begin{proof}
In order to check this identity let $\phi$ be a smooth $(n-k)$-form with a compact support
in $U$. We have to show that
\begin{equation}\label{weak_homotopy_formula}
\int_{U} (\alpha-r^*_\eps\alpha-R_\eps d\alpha)\wedge \phi = (-1)^{k}\int_U R_\eps\alpha \wedge d\phi.
\end{equation}
%
%
%
%
%
%
Note that since $r$ is Lipschitz and $\alpha$ is smooth on $X_{reg}$ the pullback $r^*\alpha$ of 
the form $\alpha$ is an
$L^\infty$ form on $X$ in the sense of \cite{SV}  on 
$$ \{x\in S: d(x,X_{sing}\cap S)\geq \eps\}\times (0,1]$$ 
for every stratum $S\in\Sigma$, $S\subset X_{reg}$ and any $\eps>0$ .
As a result, 
the forms $\alpha_1\wedge\phi$ and $\alpha_1\wedge d\phi$, where $\alpha_1:=P\alpha$ are also $L^\infty$ forms in the sense of \cite{SV}.
To be precise we mean that there is a stratification $\C$ of $U\times [\eps,1]$ such that
the forms $\alpha_1\wedge\phi$ and $\alpha_1\wedge d\phi$ are stratified and bounded with 
stratified and bounded exterior derivatives.
In the computation below exterior derivative of a form is calculated on each stratum separately.

Below we denote by $d_x$ the exterior derivative with respect to $x$ (ignoring the variable $t$). We begin by analyzing the right hand side of (\ref{weak_homotopy_formula}):
\begin{eqnarray}\label{verif_1}
\int_U R_\eps\alpha \wedge d\phi &=& \int_U \int_\eps^1 \alpha_1(x,t)dt \wedge d\phi\nonumber \\
&=& \int_{U\times[\eps,1]} \alpha_1(x,t)dt \wedge d\phi \\
&=& (-1)^k\int_{U\times[\eps,1]} d(\alpha_1(x,t)dt \wedge \phi) - d_x\alpha_1dt\wedge\phi\nonumber \\
&=& (-1)^k\int_{U\times[\eps,1]} d(\alpha_1(x,t)dt \wedge \phi) - (-1)^k\int_{U\times[\eps,1]}d_x\alpha_1dt\wedge\phi \nonumber \\ 
&=& (-1)^{k+1}\int_{U\times[\eps,1]}d_x\alpha_1dt\wedge\phi.\nonumber
\end{eqnarray}
The latter equality makes use of the Stokes' formula for $L^\infty$ forms. Indeed,
\begin{eqnarray*}
\int_{U\times[\eps,1]} d(\alpha_1(x,t)dt \wedge \phi) &=& \int_{\pa( U\times[\eps,1])} \alpha_1(x,t)dt \wedge \phi\\
&=& 
\int_{\pa U\times[\eps,1]} \alpha_1(x,t)dt \wedge \phi + (-1)^n\int_{U\times\pa [\eps,1]} \alpha_1(x,t)dt \wedge \phi\\
\end{eqnarray*}
The first summand in the latter equation equals to zero since $\phi$ is compactly supported in $U$
and the second summand vanishes since on $U\times\pa[\eps,1]$  the variable $t$ is locally constant and therefore $dt=0$.

Next we simplify the left hand side of the equation (\ref{weak_homotopy_formula}):
$$\int_{U} (\alpha-r^*_\eps\alpha-R_\eps d\alpha)\wedge \phi =
\int_{U} (\alpha-r^*_\eps\alpha)\wedge\phi-\int_{U\times[\eps,1]} \left(\frac{\pa\alpha_0}{\pa t}-d_x\alpha_1\right)dt\wedge \phi\ , $$

where $\alpha_0:=r^*\alpha - P\alpha$.
Formula (\ref{weak_homotopy_formula}) follows from
$$\int_{U} (\alpha-r^*_\eps\alpha)\wedge\phi=\int_{U\times[\eps,1]} \frac{\pa\alpha_0}{\pa t}dt\wedge \phi, $$
which we prove below.

By refining the stratification $\C$ we may assume that 
each stratum $S\in\C$ is a cell in $\R^n\times[\eps,1]$. In particular, 
it means that the projection of each cell $S\in\C$ to the first $n$ coordinates is a cell
in $U$. Hence, we may assume that $\C=\{S_{i,j}\}$, where $i,j\in \N$, and
$$ S_{i,j}:=\{(x,t)\in U\times[\eps,1]: \eta_{i,j}(x)\leq t\leq \eta_{i,j+1}(x), x\in S'_i\}\ ,$$
where $S'_i$ is the projection to the first $n$ coordinates of the set $S_{i,j}$ (for any $j$) and
$\eta_{i,j}$ are smooth semialgebraic functions defined over $S'_i$.

Next, since $r^*\alpha$ is stratified it follows that $r^*\alpha|_{t=\eta_{i,j}(x)}$ is well defined for $x\in S'_i$ and, in particular, $\alpha_0(x,\eta_{i,j}(x))$ is well defined.
Therefore, 
\begin{eqnarray*}
\int_{U\times[\eps,1]} \frac{\pa\alpha_0}{\pa t}dt\wedge \phi &=&
\int_{U}\left(\int_{\eps}^{1} \frac{\pa\alpha_0}{\pa t}dt\right)\wedge \phi\\
&=& \sum_{i,j} \int_{S'_i}\left( \int_{\eta_{i,j}(x)}^{\eta_{i,j+1}(x)} \frac{\pa\alpha_0}{\pa t}dt\right)\wedge\phi\\
&=& \sum_{i} \int_{S'_i}\left(\alpha_0(x,1) - \alpha_0(x,\eps) \right)\wedge\phi\\
&=& \int_{U} (\alpha-r_\eps^*\alpha)\wedge\phi\ ,
\end{eqnarray*}
as required.

\end{proof}


%
%
%
%
%
\subsection{Finding a smooth solution to problem (\ref{loc_ineq})}
As a corollary of the results of the previous section the following holds 
\begin{cor}\label{h_cor}
In the setting of Proposition \ref{homotopy_prop} assume $\omega$ is a smooth $L^p$ bounded closed form defined on $U$. Then there exists a smooth $L^p$ bounded form $\xi$ solving problem (\ref{loc_ineq}).
\end{cor}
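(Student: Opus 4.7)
My plan is to take $\xi := R_0 \omega$ and verify the two requirements of (\ref{loc_ineq}) directly. The homotopy operator has been constructed precisely for this purpose, and closedness of $\omega$ eliminates the problematic $R_\eps d\omega$ term in the homotopy identity.

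First I would apply Proposition \ref{homotopy_prop} with $\alpha = \omega$. Since $d\omega = 0$, the homotopy identity collapses to $\overline{d}(R_\eps \omega) = \omega - r_\eps^* \omega$ for every $\eps > 0$. By Theorem \ref{thm_loc_lp_ineq}(ii), $R_\eps \omega \to R_0 \omega$ in $L^p(U)$, and by (iii), $r_\eps^* \omega \to 0$ in $L^p(U)$. Since $\overline{d}$ is a closed operator on the $L^p$ complex, passing to the limit gives $\overline{d}(R_0 \omega) = \omega$ in the weak sense. The $L^p$ bound $\|R_0 \omega\|_{L^p(U)} \leq C \|\omega\|_{L^p(U)}$ then follows from part (i) of the same theorem by letting $\eps \to 0$ in $\|R_\eps \omega\|_{L^p(U)} \leq C\|\omega\|_{L^p(U)}$.

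It remains to show that $\xi := R_0\omega$ is smooth on $U \cap X_{reg}$. For any fixed $\eps > 0$ the form $R_\eps\omega$ is smooth on each stratum of $U \cap X_{reg}$, because $r|_{S\times(0,1]}$ is smooth for every stratum $S$ and $\omega$ is smooth on $X_{reg}$. On a compact subset of a top-dimensional stratum bounded away from the singular point $0$, the $x$-derivatives of the integrand $\omega_1(x,t)$ are controlled by derivatives of $\omega$ and powers of $t$ extracted from the semialgebraic estimates of Theorem \ref{lip_retraction_main}; for $p$ large enough these controls should make $\int_\eps^1 \omega_1(\cdot,t)\,dt$ converge in $C^\infty_{loc}(U \cap X_{reg})$ as $\eps \to 0$, so $\xi$ is smooth on $U \cap X_{reg}$. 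I expect the main obstacle to be making this $C^\infty$-convergence precise: Theorem \ref{lip_retraction_main} only records bounds on the first-order derivatives of $r$, so one either must extract analogous bounds on higher-order derivatives from the semialgebraic construction of $r$, or else bypass the issue by mollifying the weak $L^p$ solution on the smooth manifold $U \cap X_{reg}$ and invoking the ordinary smooth Poincar\'e lemma to upgrade it to a smooth primitive with a comparable $L^p$ norm.
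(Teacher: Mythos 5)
Your first half is correct and matches the paper exactly: $\xi' := R_0\omega$ is $L^p$ bounded, the homotopy identity from Proposition \ref{homotopy_prop} gives $\overline d(R_\eps\omega) = \omega - r_\eps^*\omega$, and passing to the limit using (ii), (iii), and (i) of Theorem \ref{thm_loc_lp_ineq} yields $\overline d\xi' = \omega$ with $\|\xi'\|_{L^p} \leq C\|\omega\|_{L^p}$. So far so good.

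The gap is in the smoothness step, and you have correctly diagnosed the problem yourself: Theorem \ref{lip_retraction_main} only furnishes bounds on $Dr_t$, not on higher derivatives, so there is no control on the $C^\infty_{loc}$ convergence of $\int_\eps^1 \omega_1(\cdot,t)\,dt$. Trying to show $R_0\omega$ is itself smooth is therefore the wrong road, and the statement of the Corollary does not actually require it. The paper never claims $R_0\omega$ is smooth; instead it invokes Theorem \ref{Y1} (Youssin's Theorem 2.7.1), which is stated immediately before the Corollary for exactly this purpose. Since $\xi'$ is $L^p$ bounded and $\overline d\xi' = \omega$ is smooth and $L^p$, Theorem \ref{Y1} produces $\psi$ with $\|\psi\|_{L^p} + \|\overline d\psi\|_{L^p} < \|\omega\|_{L^p}$ such that $\xi := \xi' + \overline d\psi$ is smooth; then $d\xi = \overline d\xi' = \omega$ and $\|\xi\|_{L^p} \leq (C+1)\|\omega\|_{L^p}$. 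Your final sentence (``mollify and invoke the ordinary Poincar\'e lemma'') is the right instinct but not the right tool: the ordinary Poincar\'e lemma on the non-compact manifold $U\cap X_{reg}$ does not by itself give $L^p$ control near the singularity, whereas Theorem \ref{Y1} is precisely the global smoothing statement that preserves $L^p$ bounds and exact differences. Replace the smoothness discussion with a citation of Theorem \ref{Y1} and the proof is complete.
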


To prove this corollary we will need a theorem from \cite{Y}.
\begin{thm}(Theorem 2.7.1 \cite{Y})\label{Y1}
Let $M$ be Riemannian manifold. Suppose that $\omega$ in an $L^p$ bounded form on $X$ with 
$\overline d \omega$ a smooth $L^p$ bounded form. Then for any $\eps>0$ there exits a form $\psi_\eps$ such that $\|\psi_\eps \|_{L^{p}}+\|\overline d\psi_\eps \|_{L^{p}}<\eps$ and $\omega+\overline d\psi_\eps$ is smooth.
\end{thm}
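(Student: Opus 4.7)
Plan proposal.

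The plan is a classical Friedrichs mollification argument paired with an explicit de Rham chain homotopy, localized via a partition of unity. The key algebraic input is Cartan's formula applied to the vector field generating Euclidean translations, and the analytic input is that convolution with a smooth compactly supported kernel preserves smoothness and commutes with the weak exterior derivative.

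First I work in a Euclidean chart. Let $\rho\in C_c^\infty(\R^n)$ be a symmetric mollifier with $\int\rho=1$ and set $\rho_\delta(y):=\delta^{-n}\rho(y/\delta)$. Define the mollification
$$\omega_\delta(x):=\int_{\R^n}\rho_\delta(y)\,\tau_y^*\omega(x)\,dy,$$
where $\tau_y(x)=x+y$. Then $\omega_\delta\in C^\infty$, $\omega_\delta\to\omega$ in $L^p$ as $\delta\to 0$, and $\overline{d}\omega_\delta=\rho_\delta*\overline{d}\omega$. Applying Cartan's formula to the constant vector field $Y\equiv y$ and integrating along its flow $\tau_{sy}$ gives the pointwise identity
$$\tau_y^*\omega-\omega=\overline{d}h_y\omega+h_y\overline{d}\omega,\qquad h_y\omega:=\int_0^1\iota_y\tau_{sy}^*\omega\,ds.$$
Averaging against $\rho_\delta$ yields the chain homotopy
$$\omega_\delta-\omega=\overline{d}K_\delta\omega+K_\delta\overline{d}\omega,\qquad K_\delta\omega:=\int_{\R^n}\rho_\delta(y)h_y\omega\,dy.$$

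Setting $\psi_\eps:=K_\delta\omega$ and rearranging gives $\omega+\overline{d}\psi_\eps=\omega_\delta-K_\delta\overline{d}\omega$. The right side is smooth because $\omega_\delta$ is smooth by construction and $K_\delta$ applied to the smooth form $\overline{d}\omega$ is again smooth (its integral kernel is $C_c^\infty$ in $y$ and the integrand in $x$ depends smoothly on the parameters $(s,y)$ over a compact set). For the norms, the inequality $|h_y\omega(x)|\lesssim|y|\,\sup_{0\le s\le 1}|\tau_{sy}^*\omega(x)|$ together with Young's inequality gives $\|K_\delta\omega\|_{L^p}\lesssim\delta\,\|\omega\|_{L^p}$ and similarly $\|K_\delta\overline{d}\omega\|_{L^p}\lesssim\delta\,\|\overline{d}\omega\|_{L^p}$; combining these with the standard fact $\|\omega_\delta-\omega\|_{L^p}\to 0$ yields $\|\psi_\eps\|_{L^p}+\|\overline{d}\psi_\eps\|_{L^p}<\eps$ for $\delta$ small enough.

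To globalize to $M$, cover it by a locally finite atlas with subordinate partition of unity $\{\chi_j\}$, apply the Euclidean construction chart-by-chart to obtain operators $K_\delta^{(j)}$, and assemble a global $\psi_\eps$ (for instance $\psi_\eps:=\sum_j\chi_j K_\delta^{(j)}\omega$, choosing a common scale $\delta$). The main obstacle is precisely this patching step: the naive localization $\chi_j\omega$ destroys the smoothness hypothesis on $\overline{d}\omega$, since $\overline{d}(\chi_j\omega)=d\chi_j\wedge\omega+\chi_j\overline{d}\omega$ contains the merely $L^p$ term $d\chi_j\wedge\omega$. The remedy is to apply the mollification to $\omega$ itself within each chart and introduce $\chi_j$ only on the output side; the cross-terms $d\chi_j\wedge K_\delta^{(j)}\omega$ that appear when differentiating are controlled in $L^p$ by $\sup|d\chi_j|\cdot\delta\|\omega\|_{L^p}$ and hence absorbed by shrinking $\delta$, while the collective smoothness of the pieces combines to give a globally smooth $\omega+\overline{d}\psi_\eps$.
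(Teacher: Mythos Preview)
The paper does not itself prove this statement; it is quoted verbatim from Youssin \cite{Y} and used as a black box. So there is no ``paper's proof'' to compare against, and your proposal must stand on its own.

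Your local (Euclidean) argument is the standard one and is correct: mollification plus the Cartan homotopy gives $\omega_\delta-\omega=\overline d K_\delta\omega+K_\delta\overline d\omega$, hence $\omega+\overline d(K_\delta\omega)=\omega_\delta-K_\delta\overline d\omega$, and the right-hand side is smooth because $\omega_\delta$ is smooth and $K_\delta$ preserves smoothness. The $L^p$ estimates you state are also correct.

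The globalization, however, has a genuine gap. With $\psi_\eps=\sum_j\chi_jK_\delta^{(j)}\omega$ one computes
\[
\omega+\overline d\psi_\eps=\sum_j\chi_j\omega_\delta^{(j)}-\sum_j\chi_jK_\delta^{(j)}\overline d\omega+\sum_j d\chi_j\wedge K_\delta^{(j)}\omega.
\]
The first two sums are smooth, but the last one is not: $K_\delta$ is \emph{not} a smoothing operator on merely $L^p$ forms (its kernel, after integrating out $s$, behaves like $|u|^{1-n}$ near the origin), so $K_\delta^{(j)}\omega$ has no reason to be smooth, and the terms $d\chi_j\wedge K_\delta^{(j)}\omega$ do not cancel because the operators $K_\delta^{(j)}$ come from different charts. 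Your remark that these cross-terms are small in $L^p$ is true but irrelevant to the smoothness claim.

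The standard remedy, which is what Youssin (following de Rham and \cite{GKS2}) actually does, is to build in each chart a pair $(R_j,A_j)$ acting on forms on all of $M$ with $R_j-\mathrm{Id}=\overline dA_j+A_j\overline d$, $R_j$ equal to the identity outside the chart, $R_j$ smoothing on a slightly smaller set, and both $R_j,A_j$ preserving smoothness. One then \emph{composes} $R:=R_N\circ\cdots\circ R_1$ rather than summing; since each $R_j$ commutes with $\overline d$, telescoping gives $R-\mathrm{Id}=\overline dA+A\overline d$ with $A=\sum_jR_N\cdots R_{j+1}A_jR_{j-1}\cdots R_1$. Now $R\omega$ is genuinely smooth everywhere, $A$ preserves smoothness so $A\overline d\omega$ is smooth, and one sets $\psi_\eps:=A\omega$.
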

\begin{proof}[Proof of Corollary \ref{h_cor}]
Denote $R_\eps \omega$ by $ \xi'_\eps$.
According to $(ii)$ of Theorem \ref{thm_loc_lp_ineq} $\xi':=\lim_{\eps\to 0} \xi'_\eps $
is $L^p$ bounded. Proposition \ref{homotopy_prop} implies
$$ \overline d\xi'_\eps = \omega - r^*_{\eps}\omega. $$
Hence $(iii)$ of Theorem \ref{thm_loc_lp_ineq} and passing to limit 
as $\eps \to 0$ imply $ \overline d\xi' = \omega. $
Moreover, $(i)$ of Theorem \ref{thm_loc_lp_ineq} implies 
$\| \xi'\|_{L^{p}} \leq C \| \omega\|_{L^{p}}$.
According to Theorem \ref{Y1} there is a form $\psi$ such that 
$\|\psi \|_{L^{p}}+\|\overline d\psi \|_{L^{p}} < \|\omega\|_{L^p}$ and $\xi:=\xi'+\overline d\psi$ is smooth.
Therefore $d\xi = \overline d\xi'$ and
$$ \| \xi \|_{L^p} = \| \xi' + \overline d\psi \|_{L^p} \leq (C+1)\| \omega \|_{L^p}, $$
as required.
\end{proof}

\section{$L^p$-cohomology}\label{Lp_cohomology1}

In this section we consider an $L^p$ cohomology theory of a normal compact semialgebraic set $X$.

The $L^p$ cohomology is the cohomology of the complex $(\Omega^\bullet_{L^p}(X),\overline d)$  commonly defined by 
$$ H^k_{L^p}(X):= \frac{\text{Ker }(\overline d: \Omega^k_{L^p}(X_{reg})\to \Omega^{k+1}_{L^p}(X_{reg}))}
{\text{Im }(\overline d: \Omega^{k-1}_{L^p}(X_{reg})\to \Omega^{k}_{L^p}(X_{reg}))}\ .$$

Let 
$$ \Lambda^k_{L^p}(X):= \Omega^k(X_{reg})\cap \Omega^k_{L^p}(X_{reg}), $$
and denote the $k^{th}$ cohomology group of $(\Lambda^\bullet_{L^p},d)$ by $H^k(\Lambda_{L^p}^\bullet(X))$.

Then Theorem \ref{Y1} implies that 
the cohomology of $\Lambda_{L^p}^\bullet(X)$ is isomorphic to the $L^p$ cohomology:

\begin{prop}
$H^k(\Lambda_{L^p}^\bullet(X))=  H^k_{L^p}(X)$.
\end{prop}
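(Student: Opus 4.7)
The natural inclusion $\iota:\Lambda^\bullet_{L^p}(X)\hookrightarrow \Omega^\bullet_{L^p}(X_{reg})$ is a map of cochain complexes (since for smooth forms the classical exterior derivative $d$ agrees with the weak exterior derivative $\overline d$). The plan is to show that the induced map $\iota^*:H^k(\Lambda^\bullet_{L^p}(X))\to H^k_{L^p}(X)$ is both surjective and injective by twice invoking the smoothing Theorem \ref{Y1}.

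For surjectivity, let $[\omega]\in H^k_{L^p}(X)$ be represented by a closed form $\omega\in\Omega^k_{L^p}(X_{reg})$. Since $\overline d\omega=0$ is trivially smooth and $L^p$-bounded, Theorem \ref{Y1} produces a form $\psi\in\Omega^{k-1}_{L^p}(X_{reg})$ such that $\omega':=\omega+\overline d\psi$ is smooth. Because $\|\overline d\psi\|_{L^p}$ is finite, $\omega'$ is $L^p$-bounded, and $\overline d\omega'=0$ is also smooth and $L^p$-bounded; hence $\omega'\in\Lambda^k_{L^p}(X)$ and is closed as a smooth form. By construction $\iota([\omega'])=[\omega]$ in $H^k_{L^p}(X)$.

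For injectivity, suppose $\omega\in\Lambda^k_{L^p}(X)$ is closed and $\iota([\omega])=0$, i.e., $\omega=\overline d\eta$ for some $\eta\in\Omega^{k-1}_{L^p}(X_{reg})$. Here $\overline d\eta=\omega$ is already smooth and $L^p$-bounded, so Theorem \ref{Y1} applies to $\eta$ and yields $\psi$ with $\tilde\eta:=\eta+\overline d\psi$ smooth. Then $\tilde\eta$ is $L^p$-bounded (as sum of two $L^p$ forms) and its weak exterior derivative satisfies $\overline d\tilde\eta=\overline d\eta=\omega$, which is smooth and $L^p$-bounded. Since $\tilde\eta$ is smooth on $X_{reg}$, the classical derivative $d\tilde\eta$ equals $\overline d\tilde\eta=\omega$. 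Hence $\tilde\eta\in\Lambda^{k-1}_{L^p}(X)$ and $d\tilde\eta=\omega$, so $[\omega]=0$ in $H^k(\Lambda^\bullet_{L^p}(X))$.

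The only nontrivial input is Theorem \ref{Y1}; everything else is formal bookkeeping about smooth versus weak derivatives and the fact that $L^p$-boundedness is preserved under taking $\eta+\overline d\psi$. The main obstacle that a reader might worry about is the coincidence of $d$ and $\overline d$ on smooth $L^p$ forms on $X_{reg}$, which is standard since $X_{reg}$ is a smooth manifold, so no genuine difficulty arises.
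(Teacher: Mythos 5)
Your proof is correct and uses essentially the same idea as the paper: both rely on Theorem \ref{Y1} to smooth an $L^p$ form up to an exact term, applied twice (once at the level of cocycles, once at the level of potentials). The paper constructs a smoothing map $i:H^k_{L^p}(X)\to H^k(\Lambda^\bullet_{L^p}(X))$ and must check it is well-defined, whereas you work with the natural inclusion $\iota$ and verify surjectivity and injectivity directly; this is a slightly cleaner packaging of the same argument.
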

\begin{proof}
We define a homomorphism 
$$i: H^k_{L^p}(X)\to H^k(\Lambda_{L^p}^\bullet(X)) $$
as follows. 
Assume $\omega\in \Omega^k_{L^p}(X_{reg})$ is a closed form.
Denote by $[\omega]$  the $L^p$ cohomology class of $\omega$.
According to Theorem \ref{Y1} there is a form $\psi$ with $\|\psi\|_{L^{p,1}}<\infty$  such 
that $\omega + \overline d\psi$ is smooth.
  
Set $i[\omega]$ to be the $\Lambda_{L^p}$ cohomology class of $\omega + \overline d\psi$.
First note that $i$ is well defined. Indeed, since if $\psi'$ is another form 
such that $\|\psi'\|_{L^{p,1}}<\infty$
and  $\omega + \overline d\psi'$ is smooth then 
$$\overline d(\psi-\psi') = \omega + \overline d\psi - (\omega + \overline d\psi')    $$  
is a smooth form. Therefore, applying Theorem \ref{Y1} once more, we obtain 
a form $\xi$, $\| \xi \|_{L^{p,1}}<\infty$ such that $\psi-\psi'+\overline d\xi$ is smooth.
Finally,  
$$(\omega + \overline d\psi) - (\omega + \overline d\psi')  = d(\psi-\psi'+\overline d\xi)\ ,$$
as we claimed.
The proof of surjectivity of $i$ is straightforward. The homomorphism $i$ is also injective.
Indeed, if $i[\omega]=0$ then 
there is a form $\psi$, $\|\psi\|_{L^{p,1}}<\infty$ such that 
$\omega + \overline d \psi = d\gamma$
for a smooth form $\gamma$ with $\|\gamma\|_{L^{p,1}}<\infty$ and injectivity follows.

\end{proof}

\begin{df}
We will refer to a $k$ dimensional subset $X\subset \R^n$ as {\bf normal} if for any $x\in X$, 
there exists $\eps>0$ such that $S^{n-1}(x,\eps)\cap X_{reg}$ is connected, where $S^{n-1}(x,\eps)$ is an $(n-1)$-sphere in $\R^n$ centered at $x$ with radius $\eps$.
\end{df}

Since we work with compact sets, $L^p$ boundedness is a local property and hence germs 
of $L^p$ bounded $k$-forms define a sheaf on $X$. Namely, for every open set $U\subset X$
we associate the set $\Omega^k_{L^p}(U\cap X_{reg})$ or $\Lambda^k_{L^p}(U\cap X_{reg})$. We denote the sheaf of $L^p$ bounded $k$-forms by $\Omega^{k}_{L^p}$ and the sheaf of smooth 
$L^p$ bounded forms by $\Lambda^k_{L^p}$.
The sheaves $\Omega^{k}_{L^p}$ and $\Lambda^{k}_{L^p}$ are fine on compact semialgebraic set $X\subset\R^n$. Indeed, every open cover of $X$ can be extended to an open cover of a neighborhood of $X$ in $\R^n$,
on which existence of a partition of unity is evident.

In Section \ref{singular_case} we proved that, locally, smooth closed $k$-forms on a semialgebraic set $X$ are exact for $p>>1$ and $k>0$. If $X$ is normal then closed $0$-forms
are locally constant functions. It follows from here that the sheaf complex $\Lambda^\bullet_{L^p}$
on a normal set $X$ comprises a fine resolution of the constant sheaf $\R$ on $X$ and
therefore, a standard argument from sheaf theory implies that the singular cohomology of
$X$ naturally coincides with the cohomology of $\Omega^\bullet_{L^p}(X)$.
That is we have proven 
\begin{thm}\label{thm_isom}
Let $X\subset \R^n$ be a normal compact semialgebraic set. 
There exists $p>>1$ such that the cohomology of the $L^p$ complex $\Omega^\bullet_{L^p}(X)$ is isomorphic to the singular cohomology of $X$.
\end{thm}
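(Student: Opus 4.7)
The plan is to assemble the sheaf-theoretic argument already previewed in the paragraph before the statement. I would organize it around three ingredients: (i) that the complex of sheaves $\Lambda^\bullet_{L^p}$ forms a resolution of the constant sheaf $\R_X$, (ii) that each $\Lambda^k_{L^p}$ is fine, and (iii) that $H^k(\Lambda^\bullet_{L^p}(X))$ already computes $H^k_{L^p}(X)$.

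First, I would check that for $p \gg 1$ the sequence $0 \to \R_X \to \Lambda^\bullet_{L^p}$ is exact. In positive degrees, this is exactly the content of Corollary \ref{h_cor}: for every $a\in X$ there is a neighborhood $U$ of $a$ in which every smooth closed $L^p$-bounded $k$-form ($k\geq 1$) is the exterior derivative of a smooth $L^p$-bounded form, so the stalk complex is exact at $\Lambda^k_{L^p}$ for $k\ge 1$. For degree zero, the kernel of $d:\Lambda^0_{L^p}\to\Lambda^1_{L^p}$ is the sheaf of locally constant $\R$-valued functions on $X_{reg}$. Here the normality hypothesis is crucial: the connectedness of $S^{n-1}(x,\eps)\cap X_{reg}$ for all small $\eps$ implies that a punctured neighborhood of any $x\in X$ in $X_{reg}$ is connected, so a locally constant function on $U\cap X_{reg}$ takes a single value near $x$ and extends to a locally constant function on $U$. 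This provides the augmentation $\R_X\hookrightarrow \Lambda^0_{L^p}$ and closes up the resolution.

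Second, as the paper already notes, fineness of $\Lambda^k_{L^p}$ (and of $\Omega^k_{L^p}$) follows because any open cover of the compact semialgebraic set $X$ extends to a cover of an open neighborhood of $X$ in $\R^n$, on which a smooth partition of unity exists; multiplication by such cut-offs preserves smoothness and the $L^p$ boundedness of both the form and its differential. Then the standard sheaf-theoretic principle that a fine resolution of the constant sheaf computes sheaf cohomology gives $H^k(\Lambda^\bullet_{L^p}(X))\cong H^k(X;\R_X)$. Since compact semialgebraic sets are triangulable, sheaf cohomology with constant coefficients coincides with singular cohomology, so $H^k(\Lambda^\bullet_{L^p}(X))\cong H^k_{\mathrm{sing}}(X;\R)$. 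Composing with the already established isomorphism $H^k(\Lambda^\bullet_{L^p}(X))\cong H^k_{L^p}(X)$ from the previous proposition yields the theorem.

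The main obstacle I expect is the degree-zero step, where one must use normality correctly to identify the kernel with the constant sheaf rather than with something slightly larger that would only see locally constant functions on $X_{reg}$. One has to argue, for each $x\in X$, that there is a neighborhood basis $\{V_\eps\}$ such that $V_\eps\cap X_{reg}$ is connected; this is where the connectedness of $S^{n-1}(x,\eps)\cap X_{reg}$ is applied to sweep out connectedness of $B(x,\eps)\cap X_{reg}$. Aside from this point, the rest is a direct citation of Corollary \ref{h_cor}, the partition-of-unity remark, and the classical de Rham-type theorem for fine resolutions of $\R_X$.
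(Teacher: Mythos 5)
Your proposal matches the paper's own argument: the paper likewise verifies that $\Lambda^\bullet_{L^p}$ is a fine resolution of $\R_X$ (positive degrees by the local $L^p$ Poincar\'e lemma of Corollary \ref{h_cor}, degree zero by normality), notes fineness via extending partitions of unity from a neighborhood of $X$ in $\R^n$, and then invokes the standard sheaf-theoretic de Rham argument together with the identification $H^k(\Lambda_{L^p}^\bullet(X))\cong H^k_{L^p}(X)$. Your elaboration of the degree-zero step — using connectedness of $S^{n-1}(x,\eps)\cap X_{reg}$ to get connectedness of $B(x,\eps)\cap X_{reg}$ and hence a constant-sheaf kernel — is exactly the point the paper leaves implicit, and it is correctly handled.
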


We remark that for small $p$ the isomorphism of Theorem \ref{thm_isom} 
need not hold.
\medskip
\\
\textbf{Example. } Let $X$ be a semialgebraic set such that $(X_{reg},g)$ is a smooth Riemannian manifold diffeomorphic to $M\times(0,1]$, 
where $M$ is a smooth compact $m$-dimensional manifold with a Riemannian metric $g_M$.
Suppose that $g= dr^2 + r^{2\alpha}g_M$, where $r$ is the coordinate in $(0,1]$ and $\alpha\geq 1$. Topologically, $X$ is a cone over the manifold $M$.
Let $\omega$ be a smooth, closed, non exact and radially constant $k$-form on $X_{reg}$, i.e. $\omega$ does not contain 
any terms of the form $dr\wedge\dots$ and the coefficients of $\omega$ are independent of $r$. 
The volume form on $X_{reg}$ is given by $dV=r^{\alpha m}dr\wedge dV_M$, where $dV_M$ stands for the volume form 
on $M$.
The pointwise norm of $\omega$ is given by 
$$|\omega(x,r)| = r^{-k\alpha}|\omega(x,r)|_{M}, \ \ (x,r)\in M\times(0,1]\ ,$$
where $|\cdot|_M$ is the pointwise norm on $M$.
Note that since $\omega$ is closed and radially constant, it is independent of $r$ and, in 
abuse of notation, we will write $\omega(x)$ instead of $\omega(x,r)$.

Clearly $\|\omega\|_{L^p(X)}<\infty$ if and only if $p<\frac{\alpha m+1}{k\alpha}$:
\begin{eqnarray*}
\int_X |\omega(x)|^p dV &=& \int_0^1 \int_M |\omega|^p r^{\alpha m} dr dV_M \\
&\approx& \int_0^1 r^{-\alpha k p +\alpha m} dr < \infty.
\end{eqnarray*}

In fact, an $L^p$ bounded closed (not exact) radially constant form defines a nontrivial cohomology class in $H^k_{L^p}$. Indeed, otherwise assume $\omega=d\xi$, $\|\xi\|_{L^p}<\infty$. 
Since $\omega$ is radially constant $\xi$ must be radially constant as well.
But then $\omega|_M = d\xi|_M$ which contradicts the assumption of $\omega$
not being exact.
However, for $p>>1$ it follows from Theorem \ref{thm_isom} that $H^{k}_{L^p}(X)=0$ for $k>0$.
In our example, the minimal $p$ for which $H^{k}_{L^p}(X)=0$
equals $\frac{\alpha m+1}{k\alpha}$ .
%
%
%
%
%
%
%
%
%
%
%
%
%
%
%
%
%
%
\section{Global $L^p$ inequality on a semialgebraic set}\label{singular_case_2}
For a closed form considered in Section \ref{singular_case} 
the problem of finding an antiderivative with the bound (\ref{loc_ineq}) can be generalized to a global problem on a compact semialgebraic set $X$. 
In the case that $X$ is a compact smooth manifold, such problem is treated in \cite{S}.
In general, due to the existence of topological obstructions, of course there are no 
antiderivatives for some closed forms. 

To overcome this obstacle we derive a combinatorial condition under which closed forms are exact. 
In the case that $X$ is a compact smooth manifold, a closed form on $X$ is exact if and only if its integrals vanish on every cycle in $X$. 
We generalize this condition to closed $L^p$ bounded forms by 
extending the notion of integration over cycles
in $X$ to the case of closed $L^p$ bounded forms. In particular, our generalized condition for closed $L^p$ bounded forms to be exact is the usual one (mentioned above) whenever $X$ is a (nonsingular) manifold. 
Our definition of an integral of a closed $L^p$ bounded form over a cycle in $X$ 
is placed in the forthcoming subsection and is of a 
combinatorial nature. In \cite{S} we derive a combinatorial formula for an integral of a closed
form over a cycle  in a manifold. 
It is constructed iteratively by means of a process of an application of the $L^p$ inequality for forms on various contractible subsets. This process can be carried out for any class of forms that satisfy the $L^p$ inequality for forms on a 'good' (or even 'weakly good') covering by contractible subsets.
We prove in the forthcoming subsection that closed $L^p$ bounded forms satisfy the $L^p$ inequality for forms on contractible sets, 
which would allow us to extend the notion of an integral over cycles to the $L^p$ bounded forms.

\begin{rk}
In a paper by Gol'dshtein, Kuz'minov and Shvedov \cite{GKS}, the authors defined an integral of forms in $W^k_{p,q}$ over any $k$-dimensional manifold parametrized by a Lipschitz map. However, for our purposes, it suffices to define integrals of closed $L^p$ bounded forms just over cycles. 
\end{rk}
\subsection{Definition of an integral of a closed $L^p$ bounded form.}
To define an integral of an $L^p$ bounded form over a cycle we consider the \v{C}ech-De Rham double complex. We refer the reader to \cite{S} for the related definitions and generalities. 

Assume $X\subset \R^n$ is a compact semialgebraic set and let $\U=\{U_i\}_{i=1,\dots,N}$ be 
a finite open cover of $X$.
We associate a differential \v{C}ech complex with values in the $L^p$ bounded $k$-forms to the cover $\U$.
\begin{df}
The \v{C}ech complex with values in $\Omega_{L^p}^k$ we denote
by  $(K^{k,\bullet}(\U,\Omega_{L^p}^k),\delta)$,
where 
$$ \delta : K^{k,j}(\U,\Omega_{L^p}^k)\to K^{k,j+1}(\U,\Omega_{L^p}^k) $$
is defined by 
$$(\delta \varphi)_{i_0,\dots,i_{j+1}} := \sum_{k} (-1)^k \varphi_{i_0,\dots\hat{i_{k}},\dots,i_{j+1}} .$$
%
The 'combined' double complex {\v C}ech and the complex of $L^p$ bounded forms 
is defined by  $(K^\bullet(\U,\Omega^\bullet_{L^p}),D)$
, where 
 $$K^j(\U):=\bigoplus_{l+k=j} C^l(\U,\Omega_{L^p}^k) $$ and 
$D : K^j(\U)\to K^{j+1}(\U) $
is defined by $D:=d+(-1)^l\delta$ on $C^l(\U,\Omega_{L^p}^k)$.
\end{df}
Denote by $H^j(K^\bullet(\U))$ the cohomology of the complex $K^\bullet$ and
denote by $H^j(C^\bullet(\U,\Omega^r_{L^p}))$ the cohomology of the complex $C^\bullet(\U,\Omega^r_{L^p})$.
In \cite{S} a good cover is defined as a cover consisting of convex sets. In this
article we will work with slightly weaker condition on covers namely:

\begin{df}
If $\U=\{U_i\}$ is a cover of $X$, we say that $\U$ is a {\bf weakly good cover}
if each finite intersection of $U_i$'s is contractible.

The {\bf nerve complex} of a cover $\U$ is a simplicial complex $(C(\U)_\bullet,\pa)$ with 
simplex $[I]$ associated with every non empty intersection $U_I$.
The boundary operator $\pa:C_l(\U)\to C(\U)_{l-1}$ is defined as usually 
$$\pa[I]:=\sum_{j>0} (-1)^j [i_0,\dots,\hat{i_j},\dots,i_l], \ \ I=[i_0,\dots,i_l].$$
\end{df}

It is a well known fact that if $\U$ is a weakly good cover of $X$ then the homology of
the nerve complex $C_\bullet(\U)$ coincides with the singular homology of $X$ (see e.g. \cite{H} Corollary 4G.3).

\begin{rk}
Every triangulable set $X$ has a weakly good cover. Indeed, if $T$ is a triangulation of $X$ 
with $V:=\{1,\dots,|T|\}$ being the set of vertices of $T$, then let $\U:=\{U_i\}_{i\in V}$
be a cover of $X$, where $U_i$ is the star of vertex $i$. We claim that $\{U_i\}_{i\in V}$ is a weakly good cover.
Let $I:=(i_0,\dots,i_l)$ and assume that $U_I:= U_{i_0}\cap\dots\cap U_{i_l} \neq \emptyset$.
Every simplex of dimension $\dim X$ in the closure of $U_I$ contains the vertex $i_0$.
Therefore, it is possible to deformation retract the closure of $U_I$ to $i_0$.
Consequently, every finite intersection $U_I$ is contractible.
\end{rk}

\begin{lem}\label{loc_glob_1}
Assume that $X$ is a compact contractible semialgebraic set.
There exists $p>>1$ such that for every closed $k$-form $\omega$ in $\Lambda^k_{L^p}(X_{reg})$,  $k\geq 1$ there exists a form $\xi\in\Omega^{k-1}_{L^p}(X)$ 
such that 
\begin{equation} \label{loc_1}
\left\{
\begin{array}{lc}
 \omega=d\xi\ \ \text{on $X_{reg}$}, \\ \|\xi\|_{L^p(X)}\leq C \| \omega \|_{L^p(X)}
\end{array}
\right.
\end{equation}
\end{lem}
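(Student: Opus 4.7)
The plan is to deduce the global estimate on contractible $X$ from the local estimate (Corollary \ref{h_cor}) via the \v{C}ech--de Rham double complex, following the strategy used in \cite{S} for smooth manifolds and recalled in the introduction. First I would fix a finite open cover $\U=\{U_i\}$ of $X$ with three properties: (a) $\U$ is weakly good, (b) each nonempty intersection $U_I:=U_{i_0}\cap\cdots\cap U_{i_l}$ is contained in a stratified neighborhood of the form produced by Theorem \ref{lip_retraction_main}, so that the local estimate (\ref{loc_ineq}) of Corollary \ref{h_cor} applies on $U_I$ with a uniform constant and for a uniform exponent $p\gg 1$, and (c) $\U$ admits a semialgebraic partition of unity $\{\rho_i\}$ with uniformly bounded differentials. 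Such a cover is produced by taking the star cover of a sufficiently fine triangulation of $X$ (see the remark preceding the lemma) and refining it so that each star fits inside a neighborhood furnished by Theorem \ref{lip_retraction_main}.

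Next I would run the standard staircase descent in the double complex $(K^\bullet(\U,\Omega^\bullet_{L^p}),D)$. Viewing the closed form $\omega$ as a \v{C}ech $0$-cochain with values in $\Omega^k_{L^p}$, Corollary \ref{h_cor} on each $U_i$ produces $\xi^{(0)}\in C^0(\U,\Omega^{k-1}_{L^p})$ with $d\xi^{(0)}_i=\omega|_{U_i}$ and $\|\xi^{(0)}_i\|_{L^p(U_i)}\lesssim\|\omega\|_{L^p(U_i)}$. The \v{C}ech coboundary $\delta\xi^{(0)}$ is $d$-closed on each $U_{ij}$, so a second application of the local estimate on each $U_{ij}$ yields $\xi^{(1)}\in C^1(\U,\Omega^{k-2}_{L^p})$ with $d\xi^{(1)}=\delta\xi^{(0)}$; iterating gives $\xi^{(l)}\in C^l(\U,\Omega^{k-l}_{L^p})$ for $0\le l\le k$, each controlled in $L^p$ by $\|\omega\|_{L^p(X)}$ up to a constant depending only on the (finitely many) local constants.

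After $k$ steps the residual $\delta\xi^{(k)}$ is a $d$-closed element of $C^{k+1}(\U,\Omega^0_{L^p})$ and, being locally constant on each connected $(k+2)$-fold intersection, identifies with a \v{C}ech cocycle in $C^{k+1}(\U,\R)$. Contractibility of $X$ together with weak goodness of $\U$ forces the nerve $C(\U)$ to have the homotopy type of $X$ and hence to be acyclic in positive degrees, so $\delta\xi^{(k)}=\delta\eta^{(k)}$ for some $\eta^{(k)}\in C^k(\U,\R)$; subtracting $\eta^{(k)}$ from $\xi^{(k)}$ closes up the descent. I then assemble a global primitive by the standard Weil formula
\[
\xi \;:=\; \sum_{i}\rho_{i}\,\xi^{(0)}_{i}\;-\;\sum_{i_0<i_1}\rho_{i_0}\,d\rho_{i_1}\wedge\xi^{(1)}_{i_0i_1}\;+\;\cdots,
\]
with the usual alternating signs; the telescoping computation using $\sum_i\rho_i=1$ and the relations $d\xi^{(l)}=\delta\xi^{(l-1)}$ gives $d\xi=\omega$ on $X_{reg}$, and boundedness of $\{\rho_i\}$ with its differentials together with finiteness of $\U$ bounds each term in $L^p$ by a constant times $\|\xi^{(l)}\|_{L^p}$, whence $\|\xi\|_{L^p(X)}\le C\|\omega\|_{L^p(X)}$.

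The main obstacle is reconciling the geometric specificity of Theorem \ref{lip_retraction_main} with the combinatorial flexibility required of $\U$: the local $L^p$ estimate is furnished only on the particular neighborhoods produced by that theorem, whereas the double complex needs the estimate to hold \emph{uniformly} on every intersection $U_I$ of a weakly good cover. Carefully choosing $\U$ so that each $U_I$ lies in such a neighborhood, with constants and exponent that can be taken uniform across the finitely many intersections, is the most delicate technical point.
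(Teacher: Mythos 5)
Your overall strategy --- \v{C}ech--de Rham staircase on a finite weakly good cover, use contractibility of $X$ to kill the terminal $\delta$-cocycle of constants, and assemble a global primitive --- is the same as the paper's, and the Weil homotopy formula you write down is an equivalent packaging of the assembly described in \cite{S} Section 3.2. But there is a genuine gap in the descent step, and it is precisely the point you flag at the end as ``the most delicate technical point'': you cannot fix it by cover selection alone.

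The issue is that Corollary \ref{h_cor} produces the local $L^p$ estimate only on the specific neighborhoods $U=X_{reg}\cap V$ furnished by Theorem \ref{lip_retraction_main} (via the homotopy operator built from the deformation retraction $r$). If an intersection $U_I$ is merely \emph{contained} in such a $V$, the estimate does not transfer: to use Corollary \ref{h_cor} on $V$ you would need the form $\delta\xi^{(l-1)}_I$ to be defined on all of $V$, but it is only defined on $U_I$. There is no mechanism in the corollary that lets you solve $d\xi=\delta\xi^{(l-1)}$ on $U_I$ with an $L^p(U_I)$ bound. So property (b) of your cover does not, as stated, make the local estimate available on the multi-fold intersections.

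The paper closes this gap by running an induction on the form degree $k$. The inductive hypothesis is Lemma \ref{loc_glob_1} itself in degree $k-1$, which holds on \emph{every} compact contractible semialgebraic set. Since the intersections $B_I$ of a weakly good cover are contractible, the hypothesis applies directly to $\delta\xi^{(l-1)}_I$ on each $B_I$ without any geometric compatibility with Theorem \ref{lip_retraction_main}. The base case $k=1$ needs no estimate on the intersections at all: there $\delta\xi^{(0)}_{ij}$ is a locally constant function, and the correction is a linear-algebra problem over the nerve, bounded by trivial volume factors. Your proposal uses Corollary \ref{h_cor} at each level of the staircase where the paper uses the inductive hypothesis; substituting the induction is exactly what is needed. (A small secondary slip: the terminal level should be $\xi^{(k-1)}\in C^{k-1}(\U,\Omega^0_{L^p})$ with $\delta\xi^{(k-1)}\in C^{k}(\U,\R)$, not $\xi^{(k)}$ and $C^{k+1}$.)
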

\begin{proof}
The proof is by induction on $k$. When $k=1$ Corollary \ref{h_cor} implies that there is a cover $\{U_i\}_{i=1}^N$, $ X= \cup_{i=1}^{N} \overline U_i$
such that (\ref{loc_1}) holds on $U_i$ with $\xi$ being $\tilde\xi_i$, for some form $\tilde\xi_i$ on $U_i$.
Let $\{B_i\}$ be a weakly good cover of $X$ that refines $\{U_i\}$.
For any pair $i,j$ with $B_i\subset U_j$ we, in abuse of notation, denote within the proof of this lemma $(k-1)$-form $\tilde \xi_j$ by $\xi_i$.
In this case we have 
$$ \| \xi_i \|_{L^p(B_i)} = \|\tilde\xi_j \|_{L^p(B_i)} \leq  \|\tilde\xi_j \|_{L^p(U_j)}\lesssim \|\omega \|_{L^p(X)}.$$
Note that $\xi_{i,j}:=\xi_i-\xi_j$ is a closed $0$-form on $B_i\cap B_j$ and therefore is a constant on $B_i\cap B_j$. Define 
$$ \xi:=\xi_i + c_i \text{\ \ on $B_i$}\ ,$$
where $c_i$ are constants such that $c_i-c_j=\xi_j-\xi_i$ on $B_i\cap B_j$.
Existence of such constants follows from the fact that $U$ is contractible. Indeed,
consider the 'nerve' complex $N_\bullet:=C_\bullet(\{B_i\})$ and let $f:N_1\to\R$ be $1$-cochain
defined by $f([ij]):=(\delta\xi)_{i,j}:=(\xi_j-\xi_i)|_{B_{i}\cap B_j}$. 
Clearly, $f$ is closed and since
$X$ is contractible $f$ is exact. Therefore $f=\delta g$ where $g$ is $0$-cochain of $K$.
Hence, $ (\xi_j-\xi_i)|_{B_{i}\cap B_j} = g(i)-g(j)$. Denote $C_i:=g(i)$.
These constants $C_i$ solve a system of linear equations 
$$(\delta c)_{i,j}=(\xi_j-\xi_i)|_{B_{i}\cap B_j}\ . $$ 
Therefore, 
$$C_i = \sum A_{i,j} (\xi_j-\xi_i)|_{B_{i}\cap B_j}\ ,$$
where $A_{i,j}\in \R$ are constants that depend only on the combinatorics of the cover $\{B_i\}$.
Below we estimate the $L^p$ norms of the constants $C_i$:

\begin{eqnarray*}
\| C_i\|_{L^p(B_i)} &\leq& \sum |A_{i,j}|\| (\xi_j-\xi_i)|_{B_{i}\cap B_j} \|_{L^p(B_i)}\\
&=& \sum |A_{i,j}|\| (\xi_j-\xi_i)|_{B_{i}\cap B_j} \|_{L^p(B_i\cap B_j)}
\left(\frac{Vol(B_i)}{Vol(B_i\cap B_j)}\right)^{1/p}\\
&\leq& 
\sum |A_{i,j}|\left\{\| \xi_j \|_{L^p(B_i)} + \|\xi_i\|_{L^p(B_j)}\right\}
\left(\frac{Vol(B_i)}{Vol(B_i\cap B_j)}\right)^{1/p}\\
&\lesssim& \| \omega \|_{L^p(X)}\ .
\end{eqnarray*}
It follows
$$ \|\xi \|_{L^p(X)}\leq \sum \|\xi_i+C_i \|_{L^p(B_i)}\leq \sum\|\xi\|_{L^p(B_i)}+\sum \|C_i\|_{L^p(B_i)}\lesssim\| \omega\|_{L^p(X)}\ ,$$
which completes the proof of in the case that $k=1$.

When $k>1$, our proof is similar to that of the case when $k=1$. Assume that 
$X = \cup_{i=1}^{N} \overline{U_i}$, where $\{U_i\}$ is a cover such that (\ref{loc_1}) holds for $U:=U_j$ with $\xi:=\tilde\xi_j$ for a form $\tilde\xi_j$. Similarly to the case of $k=1$ let $\{B_i\}$ be a weakly good cover that
refines $\{U_i\}$ and for any pair $i,j$ with $B_i\subset U_j$ we once again denote by $\xi_i$ the form $\tilde\xi_j$.

Once more, note that $\xi_{i,j}:=\xi_i-\xi_j$ is a closed $(k-1)$-form on $B_i\cap B_j$. Therefore, by 
the induction hypothesis, $\xi_{i,j}=d\xi^1_{i,j}$ and estimate (\ref{loc_1}) holds on $B_i\cap B_j$. 
From here, we can run the 'globalization' process as described in \cite{S}
to obtain solutions $\xi^{l+1}_I$ to the equations $(\delta \xi^l)_I = d\xi^{l+1}_I$ on $B_I$
(see Section $3.1$, Def. $3.8$ and Example $3.7$ illustrating all of the important features of the 'globalization' construction).
In the final step we have  a collection of $0$-forms $\xi^{k-1}_I$ with 
$(\delta\xi^{k-1})_I$ being constants. By an argument similar to the one in the case that $k=1$, with $f$ being a closed (and hence exact) cochain $f:N_{k}\to\R$, defined by $f([I]):=(\delta\xi^{k-1})_I$ there are constants $C_I$ such that
$$(\delta\xi^{k-1}+C)_J=0 $$ and, moreover,
$$\|C_I \|_{L^p(B_I)}\lesssim \|\omega \|_{L^p(X)}. $$

As is described in \cite{S} Section $3.2$, we may find a collection of $(k-1)$-forms $x^{k-1}_I$ defined
on $B_I$ such that $(\delta x^{k-1})_J=\xi^{k-1}_J-C_J$ and 
$\delta x^{k-t} = \xi^{k-t}-dx^{k-t+1}$ for $t>1$.
As in the proof of Proposition $3.12$ (replacing '$(p,q)$-Poincar\'e inequality for forms' by the local Poincar\'e $L^p$ inequality) it follows that the forms $x^{s}_I$ admit the following estimates:
$$\| x^s_I \|_{L^p(B_I)}\lesssim\| \omega\|_{L^p(X)}, $$
and
$$\| dx^s_J \|_{L^p(B_J)}\lesssim\| \omega\|_{L^p(X)}. $$
It is then straightforward to show that $\xi:=x^0$ is a global solution to problem (\ref{loc_1}).
\end{proof}

\begin{prop}
Assume $X$ is normal and $\U$ is a weakly good cover of $X$. Then there are
isomorphisms 
$$h_1 : H^j(K^\bullet(\U))\to H^j (K^{k,\bullet}(\U,\Omega^\bullet_{L^p}))$$
and 
$$h_2 : H^j(K^\bullet(\U))\to H^j (\Omega^\bullet_{L^p}(X)))$$
induced by the homomorphisms of the respective differential complexes. 
\end{prop}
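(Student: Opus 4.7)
The plan is to run the classical Weil double-complex argument, using Lemma~\ref{loc_glob_1} as the local $L^p$ Poincar\'e lemma and the fineness of the sheaves $\Omega^k_{L^p}$ (noted earlier in this section) as the vanishing of higher \v Cech cohomology with $L^p$ coefficients. I would begin by augmenting the double complex $K^{l,k}:=C^l(\U,\Omega^k_{L^p})$ in two independent ways: horizontally, by the restriction map $\Omega^k_{L^p}(X)\to C^0(\U,\Omega^k_{L^p})$; vertically, by the inclusion $\R\hookrightarrow\Omega^0_{L^p}(U_I\cap X_{reg})$ of locally constant functions. The homomorphisms $h_1$ and $h_2$ are the maps induced on total cohomology by these inclusions, and the proposition reduces to showing that both augmentations are quasi-isomorphisms of complexes.

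For the row-wise augmentation (which delivers $h_2$), fineness of $\Omega^k_{L^p}$ on the paracompact space $X$ gives $\check H^l(\U,\Omega^k_{L^p})=0$ for $l\ge 1$, so each row has cohomology concentrated in degree $0$ and equal to $\Omega^k_{L^p}(X)$. The standard zig-zag lemma on the total complex then produces an isomorphism between $H^j(K^\bullet)$ and the $L^p$-cohomology $H^j(\Omega^\bullet_{L^p}(X))$, with inverse induced by the inclusion of the $0$-th column.

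For the column-wise augmentation (which delivers $h_1$), over each simplex $[I]$ of the nerve the column is the $L^p$ de~Rham complex of $U_I\cap X_{reg}$ augmented by $\R$. Since $\U$ is weakly good, every non-empty $U_I$ is contractible, so Lemma~\ref{loc_glob_1} applied to each $U_I$ — with a single $p\gg 1$ chosen large enough to work simultaneously for the finite family of intersections — gives exactness in positive degree together with the necessary $L^p$ bounds. In degree zero, a closed $L^p$ $0$-form is locally constant on $U_I\cap X_{reg}$; provided $\U$ is chosen fine enough (for instance, arising from a barycentrically refined triangulation as in the Remark), normality of $X$ ensures $U_I\cap X_{reg}$ is connected, so such forms are in fact constant. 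The augmented column is therefore exact, and the zig-zag argument again yields the desired isomorphism with the cohomology of the nerve.

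The main obstacle is the degree-zero step in the column exactness: one must ensure $U_I\cap X_{reg}$ is connected for every non-empty intersection, whereas normality as defined only supplies connected small punctured neighborhoods about each point. This is circumvented by refining $\U$ enough that every $U_I$ lies inside a small ball about a single point at which the normality condition is directly applicable; since the cover is finite, both this geometric refinement and the uniform choice of $p\gg 1$ can be arranged simultaneously.
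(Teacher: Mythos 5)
Your proposal is correct and follows essentially the same route as the paper: the paper's own proof merely cites the Bott--Tu double-complex (Weil) argument (Theorem 8.1, Proposition 8.8, Theorem 8.9 of \cite{BT}) with the classical Poincar\'e lemma replaced by Lemma~\ref{loc_glob_1} and fineness supplying the \v Cech-direction exactness, which is exactly the row/column augmentation argument you spell out.

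The one place your proposal wobbles is the degree-zero step of the column exactness. Shrinking the cover so that each $U_I$ sits inside a small ball about a point is on its own neither sufficient (a proper subset of such a ball can still meet $X_{reg}$ disconnectedly) nor necessary. What one actually needs is that each $U_I\cap X_{reg}$ is connected, and this is already forced by the two hypotheses in play: $U_I$ is contractible, hence connected, and $X$ is normal. Indeed, if $U_I\cap X_{reg}=A\sqcup B$ were a nontrivial disconnection, setting $A'':=\{x\in U_I:\ B(x,\eps)\cap X_{reg}\subset A\ \text{for small}\ \eps\}$ and defining $B''$ analogously gives two disjoint open sets covering $U_I$ (using normality at singular points and density of $X_{reg}$), contradicting connectedness of $U_I$. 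So no additional geometric refinement of the cover is required; only the uniform choice of $p\gg 1$ over the finitely many nonempty $U_I$, which you correctly note.
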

\begin{proof}
When $X$ is a smooth manifold, constructions of $h_1$ and $h_2$ can be found in \cite{BT} (Theorem 8.1, Proposition 8.8 and Theorem 8.9).
To adapt these constructions in our setting one has to substitute 
the classical Poincar\'e lemma by Lemma \ref{loc_glob_1}, 
cf.  
the proof of our Proposition $3.14$ in \cite{S}
in which this construction is carried out in complete details.
\end{proof}

Denote by 
$$ Int:H^j (\Omega^\bullet_{L^p}(X))) \to H^j (C^\bullet(\U,\Omega^k_{L^p}))$$
the isomorphism $h_1\circ h_2^{-1}$.
\begin{rk}\label{eq_of_int} 
It is proved in \cite{S} that if $X$ is a compact manifold and $\omega$ is a closed smooth 
$k$-form on $M$ then $(Int\ \omega)c = \int_c \omega$ for every cycle $c$ in $X$.
Moreover, if $\U$ is a good cover and $c$ is a cycle given by $\sum_{I} a_I[I]$, then 
$$ (Int\ \omega)c = (-1)^{\lfloor{\frac{k}{2}\rfloor}}\sum a_I \delta\xi^{k-1}_I,$$
where  $\xi_I^{k-1}$ are the forms on $U_I$ constructed for every form $\omega$ 
by the inductive relation:
$$ d\xi^{s+1}_I = (\delta \xi^s)_I\ \text{ on } U_I, $$
where $\xi^0_{i_0}$ is a solution to $d\xi^0_{i_0}=\omega|^{}_{U_{i_0}}$ satisfying 
$\|\xi^0_{i_0} \|_{L^p(U_{i_0})}\lesssim \| \omega\|_{L^p(U_{i_0})}$ given by Lemma \ref{loc_glob_1}.

%
\end{rk}

As a consequence of Lemma \ref{loc_glob_1} and Remark \ref{eq_of_int}
one may extend the definition of an integral over the cycles in $X$ to all closed $L^p$ bounded
forms as follows 
$$ \int_c \omega := (Int\ \omega)c = (-1)^{\lfloor{\frac{k}{2}\rfloor}}\sum a_I \delta\xi^{k-1}_I,$$
where $c=\sum_I a_I [I]$ is a cycle in $X$.

\subsection{Global $L^p$ inequality.}
A global analog of problem (\ref{loc_ineq}) can be formulated
as follows. 
Say that $X$ satisfies the global $L^p$ inequality for forms if there exists 
a constant $C>0$ such that for every closed form $\omega\in\Lambda^{k}_{L^p}(X)$ 
with zero integrals over every cycle in $X$, there is a form $\xi\in\Lambda^{k-1}_{L^p}(X)$ such that
\begin{equation}\label{glob_ineq}
\left\{
\begin{array}{lc}
 \omega=d\xi\ \ \text{on $X_{reg}$}, \\ \|\xi\|_{L^p(X)}\leq C \| \omega \|_{L^p(X)}\ .
\end{array}
\right. 
\end{equation}

\begin{prop}\label{prop_glob_lp_ineq}
For sufficiently large $p>>1$ if $\omega$ is a closed $k$-form in $\Lambda^k_{L^p}(X)$ and $\int_{c} \omega = 0$ for every $c\in H_k(X)$ then
$\omega$ is exact and (\ref{glob_ineq}) holds for $\omega$.
\end{prop}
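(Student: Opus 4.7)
The plan is to imitate the proof of Lemma \ref{loc_glob_1}, carrying out the \v{C}ech--de Rham zig-zag in the double complex $K^\bullet(\mathcal{U},\Omega^\bullet_{L^p})$, and to use the vanishing-of-periods hypothesis at exactly the place where contractibility of $X$ was used locally.

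First I would fix $p$ large enough that Lemma \ref{loc_glob_1} applies on every contractible piece of a weakly good cover, and then pick such a cover $\mathcal{U}=\{U_i\}_{i=1}^N$ of $X$ (which exists by the triangulation remark), with each $U_I$ contractible. Inductively construct $\xi^s\in C^s(\mathcal{U},\Omega^{k-s-1}_{L^p})$ for $s=0,\dots,k-1$: choose $\xi^0_{i_0}$ via Lemma \ref{loc_glob_1} with $d\xi^0_{i_0}=\omega|_{U_{i_0}}$ and $\|\xi^0_{i_0}\|_{L^p(U_{i_0})}\lesssim \|\omega\|_{L^p(X)}$; then inductively, since $(\delta\xi^s)_I$ is a closed $L^p$ form on the contractible set $U_I$, Lemma \ref{loc_glob_1} provides $\xi^{s+1}_I$ with $d\xi^{s+1}_I=(\delta\xi^s)_I$ and $\|\xi^{s+1}_I\|_{L^p(U_I)}\lesssim \|\omega\|_{L^p(X)}$.

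The key step is what happens at the top of the zig-zag. The final cochain $\delta\xi^{k-1}\in C^k(\mathcal{U},\R)$ is a $k$-cocycle of real constants. By Remark \ref{eq_of_int} and the definition of the integral of an $L^p$ bounded closed form given at the end of the previous subsection, its pairing against a $k$-cycle $c=\sum_I a_I[I]$ of the nerve is exactly $(-1)^{\lfloor k/2\rfloor}\int_c\omega$. Since $\mathcal{U}$ is weakly good, $H_k(C_\bullet(\mathcal{U}))\cong H_k(X)$, and so the hypothesis that $\int_c\omega=0$ for every $c\in H_k(X)$ forces $\delta\xi^{k-1}$ to be a coboundary in $C^\bullet(\mathcal{U},\R)$: there exist constants $C\in C^{k-1}(\mathcal{U},\R)$ with $\delta C=\delta\xi^{k-1}$. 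As in the $k=1$ case of Lemma \ref{loc_glob_1}, the underlying linear system $(\delta c)_I=(\delta\xi^{k-1})_I$ can be solved with $|C_I|$ bounded by an absolute constant (depending only on the combinatorics of the cover) times combinations of the $\|\xi^{k-1}_J\|_\infty$, whence $\|C_I\|_{L^p(U_I)}\lesssim \|\omega\|_{L^p(X)}$.

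To finish I run the downward zig-zag exactly as in the proof of Lemma \ref{loc_glob_1}: since $\xi^{k-1}-C$ is $\delta$-closed in $C^{k-1}(\mathcal{U},\Omega^0_{L^p})$, Lemma \ref{loc_glob_1} produces iteratively $x^{k-t}\in C^{k-t-1}(\mathcal{U},\Omega^{t-1}_{L^p})$ with $\delta x^{k-1}=\xi^{k-1}-C$ and $\delta x^{k-t}=\xi^{k-t}-dx^{k-t+1}$ for $t>1$, with $L^p$ bounds propagated at every stage (as in the estimation of $x^s$ in the proof of Lemma \ref{loc_glob_1}). The terminal object $x^0\in C^0(\mathcal{U},\Omega^{k-1}_{L^p})$ is $\delta$-closed, hence glues to a single global form $\xi:=x^0$ on $X_{reg}$ satisfying $d\xi=\omega$ and $\|\xi\|_{L^p(X)}\lesssim\|\omega\|_{L^p(X)}$.

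I expect the main obstacle to be securing the identification $[\delta\xi^{k-1}]\longleftrightarrow \int_{(\cdot)}\omega$ under the isomorphism $Int$ of the previous proposition: the integral of an $L^p$ bounded closed form over a cycle is itself defined through this same zig-zag, so one must be careful that the hypothesis ``$\int_c\omega=0$ for every $c\in H_k(X)$'' is indeed exactly the statement that $\delta\xi^{k-1}$ is a nerve coboundary, independently of the choice of zig-zag. Once this matching is in place, the combinatorial estimate on $C_I$ and the remaining bookkeeping follow the template of Lemma \ref{loc_glob_1} and \cite{S}.
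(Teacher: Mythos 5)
Your proposal reproduces, in explicit form, exactly the argument the paper gives (the paper merely cites the zig-zag of \cite{S} Sections 3.1--3.2 with Lemma \ref{loc_glob_1} in place of the $(p,q)$ Poincar\'e inequality), so the approach is the same; the only substantive point is the worry you flag at the end, and it is taken care of by the preceding proposition showing that $h_1$ and $h_2$ are isomorphisms induced from maps of complexes, so that $Int=h_1\circ h_2^{-1}$ is well-defined on cohomology independently of the zig-zag used to compute it, and hence $\int_c\omega=0$ for all $c\in H_k(X)$ is equivalent to $\delta\xi^{k-1}$ being a nerve coboundary for whichever zig-zag you actually run.
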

\begin{proof}

The proof of this proposition follows the same argument as the proof of 
Theorem $3.1$ in \cite{S}. We construct the form $\xi$ 
satisfying (\ref{glob_ineq}) following faithfully the structure of the construction in \cite{S}
 Sections $3.1$ and $3.2$ for a finite weakly good cover $\U$ of $X$,
but  replacing the '$(p,q)$ Poincar\'e inequality' for forms by Lemma \ref{loc_glob_1} .
\end{proof}

%
%
%
%
%
%
%
%
%
%
%
%
%

\section{Introduction to Lipschitz Retraction Theorem}\label{sec_Lip_retract}
Deformation retractions play an important role in De Rham theory.
For instance, a standard proof of classical  Poincar\'e lemma  on a star shaped
domain $U\subset\R^n$ uses a smooth deformation retraction 
$r:U\times I \to U$ to construct a primitive of a closed form $\omega$
in the following way. Let us assume for simplicity that $U$ is star shaped
(from $0\in\R^n$). Let $r_t(x)=r(x,t):=tx$. Assume that $\omega$ is a closed form, 
then we have the following (unique) decomposition of the pull back of $\omega$ by $r$:
$$ r^*\omega = \omega_0 + dt\wedge\omega_1, $$ 
where the differential forms $\omega_0$ and $\omega_1$ do not contain any terms
involving $dt$.
Set 
$$ \gamma(x):=\int_0^1 \omega_1(x,t) dt .$$
Now $d\gamma = \omega$. Indeed, since $d\omega=0$ and $d$ commutes with $r^*$ we have
$$ 0=dr^*\omega=d_x\omega_0 + dt\wedge(\frac{\pa\omega_0}{\pa t}-d_x\omega_1), $$
where $d_x$ represents the exterior derivative with respect to $x$.
Therefore, $\frac{\pa\omega_0}{\pa t}=d_x\omega_1$ and hence
$$ d\gamma(x) = \int_0^1 d_x \omega_1 dt = \int_0^1 \frac{\pa\omega_0}{\pa t} dt 
=\left. \omega_0(x,t)\right|_{t=0}^{t=1}=r_1^*\omega(x) - r_0^*\omega(x)=\omega(x) .$$

However, in this article we deal with semialgebraic sets $X$, which need
not have star shaped neighborhoods of every point.
Therefore, to extend the Poincar\'e lemma to our setting we will have to
construct Lipschitz semialgebraic deformation retractions with controlled growth of their derivatives.
The main techniques of our construction 
are based on Lipschitz semialgebraic geometry theory developed in \cite{V1}.



In what follows, we represent points $q\in\R^{n+1}$ by pairs $(x,y)\in\R^{n}\times \R$.

\begin{df}\label{def_cell}
A {\bf cell} in $\R^n$ is defined by induction on $n$. For $n=1$, a cell is a point
or an open interval. For $n>1$ a cell is either a graph of a semialgebraic function or
a band delimited by two semialgebraic functions over a cell in $\R^{n-1}$.

A cell is called {\bf Lipschitz cell} if all the graphs and bands involved in
its construction are defined by means of Lipschitz semialgebraic functions.

A {\bf cell subdivision} of $\R^n$ is a subdivision of $\R^n$ into a disjoint collection of cells.
A cell subdivision of $\R^n$ is said to be {\bf compatible} with a set $A$ if $A$ 
can be represented as a union of the cells of this subdivision.
\end{df}

For every collection of semialgebraic sets in $\R^n$ there exists a cell subdivision 
compatible with them.

\begin{thm}
Let $A_1,\dots, A_m\subset\R^n$ be semialgebraic sets. There exists a cell subdivision 
of $\R^n$ compatible with $A_i$ for $i=1,\dots,m$.
\end{thm}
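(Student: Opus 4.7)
The plan is to proceed by induction on the ambient dimension $n$, following the classical cylindrical algebraic decomposition strategy. For $n=1$, every semialgebraic subset of $\R$ is a finite disjoint union of isolated points and open intervals, so the finite set of all endpoints and isolated points appearing in $A_1,\dots,A_m$, together with the complementary open intervals, forms a cell subdivision of $\R$ compatible with each $A_i$.

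For the inductive step I would write $\R^n=\R^{n-1}\times\R$ with coordinates $(x,y)$, and first reduce to the case in which each $A_i$ is basic, defined by finitely many polynomial sign conditions $P_{i,j}(x,y)\gtrless 0$; it then suffices to build a cell subdivision compatible with the sign loci of every $P_{i,j}$. I would view each $P_{i,j}(x,y)$ as a polynomial in $y$ whose coefficients are polynomials in $x$, and assemble the ``bad locus'' $D\subset\R^{n-1}$ consisting of the points where some leading $y$-coefficient, some discriminant in $y$, or some pairwise $y$-resultant of the $P_{i,j}$ vanishes. By Tarski--Seidenberg $D$ is semialgebraic, and the inductive hypothesis, applied to $D$ together with the vanishing loci of the individual leading coefficients and discriminants, yields a cell subdivision $\Sigma'$ of $\R^{n-1}$ compatible with all of these sets.

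Over each cell $C\in\Sigma'$ the number of real roots of every $P_{i,j}(x,\cdot)$ is constant, and by the classical theory these roots can be enumerated as continuous semialgebraic functions $\xi_1(x)<\dots<\xi_{N_C}(x)$ on $C$; the graphs $\{y=\xi_\ell(x)\}$ together with the complementary open bands (including the unbounded ones) then form a cell decomposition of $C\times\R$. Taking the union of these decompositions over $C\in\Sigma'$ produces the desired cell subdivision of $\R^n$. Compatibility with each $A_i$ follows because the sign of every $P_{i,j}(x,y)$ is constant on each band (its sign can change only across a root) and constant on each graph $\{y=\xi_\ell(x)\}$, so every $A_i$, being a boolean combination of such sign conditions, is a union of cells of the subdivision.

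The main obstacle I expect is the step asserting that on each $C\in\Sigma'$ the real roots of $P_{i,j}(x,\cdot)$ can be organised into continuous semialgebraic branches $\xi_\ell(x)$ admitting a consistent global ordering. This is the technical heart of cylindrical algebraic decomposition; the cleanest route is to invoke Thom's lemma applied to the family of $y$-derivatives of each $P_{i,j}$, which shows that the sign pattern of $\bigl(P_{i,j},\pa_y P_{i,j},\pa_y^2 P_{i,j},\dots\bigr)$ specifies each root position uniquely, semialgebraically and continuously in $x$; continuity together with the constancy of the number of roots on $C$ prevents two branches from crossing, giving the ordering $\xi_1<\dots<\xi_{N_C}$ needed to close the induction.
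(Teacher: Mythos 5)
The paper states this theorem without giving a proof: it is quoted as a classical fact of real algebraic geometry (the cylindrical algebraic, or cell, decomposition theorem), and the paper sends the reader to \cite{BCR} only for the companion statement on Whitney A stratifications that follows, but this result can be found there as well (Chapter 2). Your proposal reconstructs the standard CAD argument — induction on the ambient dimension, projection of the ``bad locus'' assembled from leading coefficients, discriminants and pairwise resultants in the last variable, inductive cell decomposition of $\R^{n-1}$ compatible with that locus, and finally the graphs of the ordered root functions and the intervening bands — and the use of Thom's lemma to guarantee that the real roots over each cell assemble into continuous, semialgebraic, non-crossing branches with a global ordering is exactly the classical way to close the induction. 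Your proof is correct and is essentially the argument the paper implicitly relies on; it does not diverge from the standard route.
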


In general, providing a cell subdivision is often not sufficient for a study of a semialgebraic set since cell subdivision 
does not include information on how the cells come in contact with the neighboring cells.
Let $\A$ be a collection of cells in $\R^n$. We say that $\A$ satisfies the
{\bf frontier condition} if the boundary of each cell in $\A$ is a union of cells in $\A$.
Next we introduce a concept of stratification. 
\begin{df}\label{df_strat}
A {\bf stratification} of a set $X$ is a collection $\Sigma$ of smooth manifolds called {\bf strata} such that their union is the set $X$ and the boundary of each stratum is union of the strata of lower dimension.

If $S$ and $S'$ are two strata in $\Sigma$ such that $S'\subset\partial S$ then we write $S'\leq S$. 

Denote by $\Sigma^k$ the collection of all strata in $\Sigma$ of dimension $k$, by $\Sigma^{(k)}$ the collection of all strata up to (and including) dimension $k$ and by $|\Sigma|$ the union of all strata in $\Sigma$.
A {\bf refinement} of $\Sigma$ is a stratification $\Sigma'$
such that each stratum of $\Sigma$ is a union of strata of $\Sigma'$. We then write $\Sigma'\prec\Sigma$. If $f:X\to Y$ is a map and $\Sigma$ is a stratification of $X$ then we
write $f(\Sigma)$ to denote the collection of sets $\{f(S):S\in\Sigma\}$.

We say that $\Sigma$ is a {\bf Whitney A} stratification if for every two strata $S'\leq S$
and a sequence of points $p_n\in S$ converging to $p\in S'$ we have
$\lim_{n\to\infty} T_{p_n}S \supset T_p S'$ whenever the limit on the left hand side exists.
\end{df}
Every semialgebraic set admits a Whitney A stratification, moreover

\begin{thm} 
Suppose that $X\subset \R^n$ is a semialgebraic set. There exists a Whitney A stratification 
of $\R^n$ compatible with $X$.
\end{thm}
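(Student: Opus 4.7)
The plan is to start from the cell subdivision theorem stated just above and then iteratively refine it to upgrade a cell decomposition into a Whitney A stratification. First I would apply the cell subdivision theorem to produce a cell subdivision of $\R^n$ compatible with $X$. By further subdividing along the closures of all cells (which are themselves semialgebraic) one can arrange that the frontier condition holds, so the cells form the strata of a stratification $\Sigma_0$ of $\R^n$ compatible with $X$. Each stratum is a smooth semialgebraic manifold, since cells are graphs or bands of semialgebraic functions which, after a further refinement reducing to $C^\infty$ cells, are smooth.

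Next, given a candidate stratification $\Sigma$, for each incident pair $S'<S$ define the \emph{bad locus}
\[
B(S',S):=\Bigl\{p\in S'\;:\;\exists\, p_n\in S,\; p_n\to p,\;\lim_{n\to\infty}T_{p_n}S\text{ exists},\;T_p S'\not\subset\lim_{n\to\infty}T_{p_n}S\Bigr\}.
\]
The key point is that $B(S',S)$ is semialgebraic: the map sending $q\in S$ to $T_q S$ is a semialgebraic map into the Grassmannian (which is a semialgebraic variety), and taking limits along sequences can be replaced by the adherence of the graph of this map, which is again semialgebraic by Tarski--Seidenberg. The containment $T_p S'\subset L$ for $L$ in the Grassmannian is also a semialgebraic condition.

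The main obstacle, and the decisive technical step, is the dimension bound
\[
\dim B(S',S)<\dim S'.
\]
I would prove this by a semialgebraic Sard-type argument: the set of pairs $(p,L)\in S'\times G$ with $L$ a limit of tangent planes from $S$ has dimension at most $\dim S$, and a generic point of $S'$ in the projection must have a limit containing $T_pS'$ (this can be done by choosing, locally near a generic $p\in S'$, a semialgebraic retraction from a tubular neighborhood of $S'$ in $S\cup S'$ and analyzing the fibers). Granting this bound, refine $\Sigma$ by splitting each $S'$ along the union $\bigcup_S B(S',S)$; this produces a finer semialgebraic stratification $\Sigma'\prec\Sigma$ in which the ``new'' bad strata have strictly smaller dimension than the originals.

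Finally, I would iterate this refinement. Because each refinement strictly decreases the dimension of the locus where Whitney A fails, and dimensions are bounded below by zero, the process terminates after finitely many steps in a Whitney A stratification of $\R^n$ compatible with $X$. (At every step compatibility with $X$ is preserved automatically, since $B(S',S)\subset S'$ and $S'$ is already either inside $X$ or disjoint from it.) I expect the dimension bound on $B(S',S)$ to be the real content; the rest is bookkeeping with the Tarski--Seidenberg theorem and the semialgebraic cell decomposition.
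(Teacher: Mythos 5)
The paper does not actually prove this theorem: it states it and immediately cites Bochnak--Coste--Roy \cite{BCR} for the proof. So there is no in-paper argument to compare against, only the standard semialgebraic stratification machinery that the citation refers to. Your outline is a recognizable sketch of that machinery, and the skeleton is sound: start from a compatible cell decomposition, restratify to impose the frontier condition, define the Whitney-A ``bad locus'' for each incident pair, show it has strictly smaller dimension, split, and descend.

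Two places in the sketch carry most of the weight and need to be made rigorous rather than gestured at. First, as you say, the dimension bound $\dim B(S',S)<\dim S'$ is the real content; the ``semialgebraic Sard-type argument'' you allude to (pass to the graph of the Gauss map $q\mapsto T_qS$ in $S\times G$, take its closure, and compare fibre dimensions over $S'$) is essentially the right idea, but everything hinges on showing that over a generic point of $S'$ every limit tangent plane from $S$ does contain $T_pS'$, and that step is not a formality; it typically uses the curve selection lemma or a local semialgebraic wing/triviality argument, and your tubular-neighbourhood phrasing would need to be turned into an actual proof. Second, the iteration needs slightly more care than ``split along $\bigcup_S B(S',S)$ and repeat.'' After splitting, the frontier condition may fail and must be re-imposed by a further refinement, and the newly created strata (including $B(S',S)$ itself) must be tested against all higher-dimensional strata, both old and new. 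The correct bookkeeping is a descending induction on the dimension of the skeleton: once the strata of dimension $\geq d$ are fixed and mutually Whitney A, one restratifies the $(d-1)$-skeleton; termination then follows because at each stage the refinement happens only inside a set of strictly smaller dimension. With those two pieces filled in, your argument is the standard one and is correct.
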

For a proof see e.g., \cite{BCR}.

\begin{rk}\label{coords_on_cell}
A Lipschitz cell $C$ of dimension $k$ in $\R^{n+1}$ is bi-Lipschitz equivalent to a $k$-dimensional Lipschitz cell 
$D\subset\R^k$. A bi-Lipschitz homeomorphism $\phi: C \to D$
can be constructed by induction as follows. The cell $C$ is either a graph of a Lipschitz semialgebraic function or a band bounded by
two Lipschitz semialgebraic functions over a cell $C'\subset\R^{n}$. 
Assume $\phi': C' \to D'$ is a bi-Lipschitz homeomorphism, 
where $D'$ is a $(k-1)$-dimensional Lipschitz cell in $\R^{k-1}$. Now,

if $C$ is a graph of $\theta:\C'\to \R$ then set 
$$\phi(x,\theta(x)):=(\phi'(x),\theta(x)),$$

if $C$ is a band bounded by $\theta_i:\C'\to\R$, $i=1,2$, $\theta_1<\theta_2$
define
$$ \phi (x,y):=(\phi'(x),y)\ .$$
Since $\theta_1,\theta_2$ and $\theta$ are Lipschitz, the map $\phi$ is bi-Lipschitz.
Note that the map $\phi^{-1}$ defines coordinates $u=(u_1,\dots,u_k)$ on $C$.
Also, observe that a function $f:C\to\R$ is Lipschitz if and only if 
$f\circ\phi^{-1}$ is Lipschitz.
\end{rk}

\begin{df}\label{std_lift}
Suppose that $C$ is a cell of $\R^{n+1}$ given by a graph of 
a function $\theta$ or by a band bounded by graphs of functions $\theta_1$ and $\theta_2$ over a cell $C'\subset\R^n$.
Moreover, assume that we have a deformation retraction $r':C'\times I \to C'$.
The {\bf standard lift} of $r'$ is a deformation retraction $r:C\times I \to C$ 
defined by
$$ r_t(q):=r(q,t):=(r'(x,t), (1-\tau(q))\theta_1(r'(x,t)) + \tau(q)\theta_2(r'(x,t)))\ ,\ \ q=(x,y)\ ,$$
where $\tau(q):=\frac{y-\theta_1(x)}{\theta_2(x)-\theta_1(x)}$ in the case that
$C=\{q:\theta_1(x)<y<\theta_2(x),\ x\in C' \}$ and by
$$ r_t(q):=r(q,t):=(r'(x,t), \theta(r'(x,t)))\ ,$$
in the case that $C = \{q:y=\theta(x),\ x\in C'\}$. Note that $\tau(r(q,t))=\tau(q)$.
\end{df}

In the reminder of this section we give 
an intuitive derivation of our main Lipschitz deformation retraction Theorem \ref{lip_retraction_main}.
In what follows we describe a rough idea of our construction of a Lipschitz semialgebraic deformation retraction $r$ on a neighborhood of a point in a semialgebraic set.
We remark that the Lipschitz semialgebraic deformation retraction of Theorem \ref{lip_retraction_main}
has additional estimates on its derivatives, but for the sake
of simplicity we will only deal with the Lipschitz property of $r$ for now (i.e. in this section).

Assume that $C$ is a cell in $\R^{n+1}$ bounded by two Lipschitz semialgebraic functions
$\theta_1<\theta_2$ defined over a cell $C'\subset\R^n$ and that 
$r':C'\times I\to C'$ is a Lipschitz semialgebraic deformation retraction. It is not
always true that the standard lift of $r'$ to $C$ is Lipschitz as
the following example shows.
\begin{ex}
Let $\xi(x):\R^2\to\R$, $\xi(x)=|x_1^2-x_2|$, $r'_t(x):=tx$. 
Assume that $C$ is a cell in $\R^3$ defined by $\{(x,y)\in\R^2\times\R: 0\leq y \leq \xi(x) \}$.
Let $r_t$ be the standard lift of $r'$ from $\R^2$ to $\R^3$.
Clearly $\xi$ is Lipschitz. Let us show that $r_t$ is not Lipschitz.
Note that 
$$ r_t(x,y) = (tx,y\frac{\xi(tx)}{\xi(x)}). $$
Observe that $r_t$ is continuous and differentiable almost everywhere, so $r_t$ is Lipschitz if 
and only if all partial derivatives of its components are bounded.
In particular, if $r_t$ is Lipschitz then $\frac{\xi(tx)}{\xi(x)}$, being the derivative of the last component of $r_t$ with respect to $y$, has to be bounded. We will show that 
$\frac{\xi(tx)}{\xi(x)}$ is not bounded. Indeed, set 
$$ x_1 = t,\ \ \ \ x_2=t^2+t^5 $$
and observe that
$$ \frac{\xi(tx)}{\xi(x)}=\frac{|t^4-t^3-t^6|}{|t^5|} \to \infty \text { as } t\to 0.$$

It is possible to redefine the deformation retraction $r'$ on $\R^2$
in such a way that its standard lift would be Lipschitz. Indeed,
let $r'_t(x):=(tx_1,t^2x_2)$ then 
$$ \frac{\xi(r'_t(x))}{\xi(x)}=t^2,$$
and hence the standard lift $r_t$ is Lipschitz.
\end{ex}

This example leads us to formulate a condition for a standard lift 
of a Lipschitz semialgebraic deformation retraction to be Lipschitz.

\begin{prop}\label{lip_lift}
Assume that $C\subset\R^{n+1}$ is a cell which 
is a graph of a Lipschitz semialgebraic function $\theta_1$ or a band bounded by 
Lipschitz semialgebraic functions $\theta_2$ and $\theta_3$ over a cell $C'$ of $\R^n$. 
Let $r':C'\times I\to C'$ be a Lipschitz semialgebraic deformation retraction
and $r$ be its standard lift. 
The standard lift $r$ is Lipschitz in the case that $C$ is a graph .
When $C$ is a band, the standard lift $r$ is Lipschitz if and 
only if 
\begin{equation}\label{theta_esti_intro}
|\theta_2(r'_t(x))-\theta_3(r'_t(x))|\lesssim |\theta_2(x)-\theta_3(x)|,
\end{equation}
where $r'_t(x):=r'(x,t)$.
\end{prop}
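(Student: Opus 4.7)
The plan is to translate the Lipschitz condition on $r$ into essential boundedness of its partial derivatives. After refining the underlying cell subdivision so that $r$ is smooth on each open piece, Rademacher's theorem (combined with the semialgebraic nature of $r$) makes Lipschitz continuity on $C \times I$ with respect to the ambient Euclidean metric equivalent to uniform boundedness of all partial derivatives where they are defined. Throughout I write points of $\R^{n+1}$ as $(x, y) \in \R^n \times \R$.

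The graph case is immediate: when $C = \{(x, \theta_1(x)) : x \in C'\}$, the parametrization $x \mapsto (x, \theta_1(x))$ is bi-Lipschitz onto $C$ by Remark \ref{coords_on_cell}, so the standard lift
\[
r_t(x, \theta_1(x)) = (r'(x, t),\ \theta_1(r'(x, t)))
\]
is a composition of the Lipschitz maps $r'$ and $\theta_1$, hence Lipschitz.

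For the band case, set $\psi(x) := \theta_3(x) - \theta_2(x) > 0$ on $C'$ and $\phi(x, t) := \theta_3(r'_t(x)) - \theta_2(r'_t(x)) > 0$ on $C' \times I$. The $y$-component of the lift reads
\[
F(x, y, t) = \theta_2(r'_t(x)) + \tau(x, y)\,\phi(x, t), \qquad \tau(x, y) := \frac{y - \theta_2(x)}{\psi(x)} \in [0, 1].
\]
For necessity, I would freeze $(x, t)$ and differentiate in $y$ to obtain $\partial_y F = \phi(x, t)/\psi(x)$; if $r$ is Lipschitz, this ratio must be uniformly bounded, which is exactly inequality (\ref{theta_esti_intro}).

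For sufficiency, the first component of $r$ is $r'$ itself, which is Lipschitz by hypothesis, so only the partial derivatives $\partial_{x_i} F$, $\partial_y F$, $\partial_t F$ need to be controlled. The key observation is that in the chain-rule expansion of these derivatives, every occurrence of $1/\psi(x)$ is paired either with the factor $\phi(x, t)$ — converted to a bounded quantity by (\ref{theta_esti_intro}) — or with $y - \theta_2(x)$, which simply reproduces the bounded factor $\tau \in [0, 1]$. The remaining ingredients are partial derivatives of $\theta_2$, $\theta_3$ and $r'$, all bounded by the Lipschitz hypotheses. The main obstacle I anticipate is the bookkeeping in $\partial_{x_i} F$: one has to expand $F = \theta_2(r'_t(x)) + \tau(x, y)\phi(x, t)$ via the chain rule and regroup the resulting terms so that each potentially singular $1/\psi$ is transparently absorbed in one of the two ways above. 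Once this rearrangement is carried out, boundedness of all partial derivatives, and hence the Lipschitz property of $r$, follows directly.
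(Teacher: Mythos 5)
Your proposal is correct and follows essentially the same route as the paper: reduce to boundedness of partial derivatives, handle the graph case directly, and in the band case compute $\partial_y F=\phi/\psi$ to get necessity and then observe that in $\partial_{x_i}F$ every $1/\psi$ pairs with either $\phi$ (controlled by (\ref{theta_esti_intro})) or $y-\theta_2(x)$ (reproducing $\tau\in[0,1]$); the paper simply writes out the chain-rule bookkeeping that you describe but defer.
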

\begin{proof}
Since semialgebraic functions are generically smooth we only have to check that 
partial derivatives of $r$ 
are bounded.
By Remark \ref{coords_on_cell} there exist 
bi-Lipschitz maps $\phi':C'\to D'$ 
and $\phi:C\to D$ such that the diagram

$$\begin{array}[c]{ccc}
C&\stackrel{\phi}{\rightarrow}&D\\
\  \ \ \ \downarrow\scriptstyle{\pi_{n+1}}&&\downarrow\scriptstyle{\pi_k}\\
C'&\stackrel{\phi'}{\rightarrow}&D'
\end{array}$$
is commutative, where $\pi_j:\R^{j}\to\R^{j-1}$ is the standard projection to the first $j-1$ coordinates.
Therefore, we may assume (by replacing $C$ with $D$ and $C'$ with $D'$) that $C'$ is a cell of dimension $n$ in $\R^n$ and $C$ is either a graph or a band over $C'$.

The map $r_t$ can be written as $ r_t(x,y) = (r'_t(x),r_{n,t}(x,y))$.
Set $D_j:=\frac{\pa}{\pa x_j}$ and $D_t:=\frac{\pa}{\pa t}$.
By our assumption $r'$ is Lipschitz. Therefore, we only have to check that
$|D_j r_{n,t}|$ and $\left|\frac{\pa r_{n,t}}{\pa t}\right|$ are bounded.

In the case that $C$ is a graph of $\theta_1$ we have
$$ |D_j r_{n,t}(x,y) | = |D_j (\theta_1(r'(x,t)))|= |\sum_i D_i\theta_1(r'(x,t))D_j r'_{i,t}(x)|, $$
which is bounded since $\theta_1$ and $r'_t$ are Lipschitz. For the same reason
$$|D_t r_{n,t}(x,y)|=
|\sum_j D_j\theta_1(r'(x,t))D_t r'_{j,t}(x)| $$ 
is bounded.

In the case that $C$ is a band bounded by $\theta_2<\theta_3$, set $\theta(x):=\theta_3(x)-\theta_2(x)$ and let $D_y:=\frac{\pa }{\pa y}$.
Let $\tau(q)=\frac{y-\theta_2(x)}{\theta(x)}$ be as in Definition \ref{std_lift}.
\begin{eqnarray*}
| D_j r_{n,t}(x,y)| &=& |D_j\left( \theta_2(r'(x,t)) + \tau(q)\theta(r'(x,t)) \right)|\\
&\leq& |D_j \theta_2(r'(x,t))| + |D_j\left(\tau(q)\theta(r'(x,t)\right)|\\
&\leq& C_1 + |(D_j\tau(q))\theta(r'(x,t))+\tau(q)D_j\theta(r'(x,t))|.
\end{eqnarray*}
Note that 
$$ D_j\tau(q) = \frac{-D_j\theta_2(x)\theta(x)
- (y-\theta_2(x))D_j\theta(x)}{\theta^2(x)}$$
Thus, by (\ref{theta_esti_intro}) and the fact that $0<y-\theta_2(x)<\theta(x)$ we have
$$
|D_j\tau(q)\theta(r'(x,t))| \leq |D_j\theta_2(x)| + |D_j\theta(x)| \leq C.
$$

The estimate of $| D_y r_{n,t}(x,y)|$ is obtained as follows.
\begin{eqnarray*}
| D_y r_{n,t}(x,y)| &=& |D_y\left( \theta_2(r'(x,t)) + \tau(q)\theta(r'(x,t)) \right)|\\
&=& |D_y\left(\tau(q)\theta(r'(x,t)\right)|\\
&=& |(D_y\tau(q))\theta(r'(x,t))+\tau(q)D_y\theta(r'(x,t))|\\
&=& |\frac{1}{\theta(x)}\theta(r'(x,t))|\leq C.\\
\end{eqnarray*}

We omit the proof of the boundedness of the partial derivative in $t$ of the standard lift $r$
because it is nearly identical to the proof of the boundedness of $D_j r$ above.

For the inverse implication in the case that $C$ is a band, 
assume that $r_t(x,y)$ is Lipschitz. 
It follows that $|D_y r_{n,t}|=|\frac{1}{\theta(x)}\theta(r'(x,t))|$ is bounded,
as required. 
\end{proof}

We will make use of the basic construction of a Lipschitz semialgebraic deformation retraction 
on a Lipschitz cell as a standard lift of a Lipschitz semialgebraic deformation from lower dimensional cell.
Our goal is to obtain a local Lipschitz semialgebraic deformation retraction to any point of $X$ as a step in
an inductive process. 
Criterion (\ref{theta_esti_intro}) derived in Proposition \ref{lip_lift} results in the following Lipschitz deformation retraction theorem.

\begin{thm}\label{lip_retraction_intro}
Let $X:=\cup_{j=1}^{m} X_j \subset\R^n$ be a closed semialgebraic set, $0\in \overline X_j\cap X$,$j=1,\dots,m$, and $\xi_1,\dots,\xi_s:\R^n\to \R$ some continuous semialgebraic functions.
Then there exists a neighborhood $U$ of $0$ in $\R^n$ and 
a Lipschitz deformation retraction $r:U\times I\to U$ that preserves $X_j$ , $j=1,\dots,m$
and satisfies
\begin{equation}\label{xi_esti_intro}
\xi_j(r_t(x))\lesssim \xi_j(x).
\end{equation}
\end{thm}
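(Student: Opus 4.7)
The plan is to prove this by induction on the ambient dimension $n$, building the retraction cell by cell via the standard lift construction (Definition \ref{std_lift}) and verifying the Lipschitz property by Proposition \ref{lip_lift}. The inductive hypothesis has to be stronger than the statement itself, because the band-width criterion (\ref{theta_esti_intro}) forces us to carry along more controlled functions as we descend in dimension; the precise inductive statement will control a finite collection of semialgebraic functions extracted from the cell decomposition in addition to the given $\xi_1,\dots,\xi_s$.

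Base case $n=1$. A neighborhood of $0\in\R$ is a finite union of intervals and the point $\{0\}$, refined to be compatible with $\cup_jX_j$ and with the sign loci of each $\xi_i$. The retraction $r_t(x)=tx$ preserves each interval and $\{0\}$. On an interval not touching $0$ each $\xi_i$ is bounded above and below away from zero, so $\xi_i(tx)\lesssim\xi_i(x)$ trivially; on an interval $0\in\partial$ the semialgebraic asymptotic $\xi_i(x)\sim c\,x^a$ gives $\xi_i(tx)\sim t^a\xi_i(x)\lesssim\xi_i(x)$.

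Inductive step. Invoking the Lipschitz cell decomposition results of \cite{V1} together with the theorems recalled in Section \ref{sec_Lip_retract}, choose a Lipschitz cell subdivision $\A$ of a neighborhood of $0$ in $\R^n$ compatible with each $X_j$ and with the loci where the $\xi_i$'s vanish. Let $\pi:\R^n\to\R^{n-1}$ be the projection to the first $n-1$ coordinates, so that $\pi(\A)$ is a cell subdivision of $\R^{n-1}$. Enlarge the set of functions to be controlled in $\R^{n-1}$ to include: (i) every difference $|\theta_i-\theta_j|$ arising between Lipschitz functions defining adjacent band cells of $\A$ above a common cell of $\pi(\A)$; and (ii) the semialgebraic functions on $\pi(C)$ obtained by restricting $\xi_i$ to each graph cell and to the top and bottom graphs of each band cell. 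After refining $\A$ further (using a preparation theorem for semialgebraic functions, of the kind developed in Section \ref{sec_reg_fam}) we may assume that on each band cell each $\xi_i(x,y)$ is comparable to a product of a function depending only on $x$ and a factor that is monotone or power-like in the normalized coordinate $\tau(q)=(y-\theta_1(x))/(\theta_2(x)-\theta_1(x))$. By the inductive hypothesis applied to $\pi(\A)$ and the enlarged list of functions, there is a Lipschitz semialgebraic deformation retraction $r':U'\times I\to U'$ to $0\in\R^{n-1}$ preserving $\pi(\A)$ and satisfying the estimates $\xi(r'_t(x))\lesssim\xi(x)$ for every function in the enlarged collection.

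Define $r$ cell by cell as the standard lift of $r'$: this is consistent along shared boundaries because $\tau(r(q,t))=\tau(q)$ makes the formulas agree on graphs common to two band cells. Each $X_j$ is a union of cells, hence preserved. Since (i) ensures that (\ref{theta_esti_intro}) holds on every band cell, Proposition \ref{lip_lift} implies $r$ is Lipschitz on each cell, and hence globally Lipschitz on $U:=\pi^{-1}(U')\cap|\A|$. Finally, the estimate $\xi_i(r_t(x,y))\lesssim \xi_i(x,y)$ reduces, thanks to the factorization furnished by (ii) and the refinement, to the controlled estimates on $r'$ together with the observation that $\tau$ is preserved by the standard lift.

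The main obstacle is item (ii) of the enlarged inductive list: one has to arrange the cell subdivision so that each $\xi_i$, restricted to a cell of maximal dimension, can be dominated by finitely many semialgebraic functions on $\R^{n-1}$ that do not themselves blow up through the retraction. This is precisely the role of the regular families of functions set up in Section \ref{sec_reg_fam}, and absorbing those estimates back into the inductive hypothesis is the delicate bookkeeping step on which the whole argument rests.
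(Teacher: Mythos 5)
Your overall strategy---induction on $n$, cell-by-cell construction via the standard lift (Definition \ref{std_lift}), Proposition \ref{lip_lift} to get Lipschitzness, and a strengthened inductive hypothesis carrying an enlarged list of controlled functions extracted from the cell decomposition---is the same route the paper sketches. The enlarged list you propose in items (i) and (ii) is essentially the list used in the paper (the differences of bounding functions and the restrictions of the $\xi_i$'s to the boundary graphs, suitably truncated), and the acknowledgment that the ``delicate bookkeeping'' lives in item (ii) is accurate.

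There is, however, a genuine gap in the way you discharge that bookkeeping. You assert that after refinement each $\xi_i$ restricted to a band cell is ``comparable to a product of a function depending only on $x$ and a factor that is monotone or power-like in $\tau$,'' and you then claim the estimate ``reduces'' to the controlled estimates on $r'$ because $\tau$ is preserved. The correct prepared form (the paper's Lemma \ref{prep_lemma}) is $\xi_i(q)\sim |y-\eta_\nu(x)|^{w}a(x)$, where $\eta_\nu$ is one of the Lipschitz functions bounding the cell decomposition but \emph{need not be one of the two functions $\theta_1,\theta_2$ delimiting the band in question}. Writing $y-\eta_\nu(x)=\tau(q)\bigl(\theta_2(x)-\theta_1(x)\bigr)+\bigl(\theta_1(x)-\eta_\nu(x)\bigr)$ shows that when $\eta_\nu<\theta_1$ strictly, $|y-\eta_\nu(x)|^{w}$ is a power of a \emph{sum} of a $\tau$-term and an $x$-term, which does not factor as $f(x)\,g(\tau)$. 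This is exactly why the paper has to verify condition (\ref{xi_esti_intro}) through the separate Lemma \ref{check_xi_ineq}, which replaces the putative factorization by a case analysis on the sign of $w$ using the equivalences $(A+B)^{w}\sim\min(A^w,B^w)$ for $w<0$ and $(A+B)^{w}\sim\max(A^w,B^w)$ for $w>0$, together with the truncations $\min(\cdot,1)$ in the inductive list, before finally using $\tau(r_t(q))=\tau(q)$. Without that min/max decomposition your reduction step does not go through. A smaller omission: you invoke a ``Lipschitz cell subdivision'' as if it existed in the original coordinates; the paper first applies a global bi-Lipschitz change of coordinates (via regular families of hypersurfaces and Proposition \ref{prop313}) precisely to arrange that the bounding functions are globally defined Lipschitz functions over $\R^{n-1}$---this preliminary normalization is what makes the standard lift and the Lipschitz verification workable, and it should be stated explicitly.
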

The latter theorem is too weak for our applications and is included here only 
as an introduction to the topic of the Lipschitz deformation retraction.
Therefore we only sketch its proof.
A 'stronger' version of this theorem is Theorem \ref{retract} which is 
proven in Section \ref{sec_Lip_ret} in complete details. 
\\
{\it Sketch of the proof of Theorem \ref{lip_retraction_intro}}. The proof is by induction on $n$. The case of $n=1$ is easy so we skip it 
and go directly to proving the inductive step.
First we use a preparation theorem for functions $\xi_j$ in combination
with a bi-Lipschitz transformation to bring the semialgebraic sets $X_i$ into a `good position'
and `prepare' the functions $\xi_j$. More precisely, after a bi-Lipschitz
transformation the topological boundaries of the sets $X_j$ will belong to 
the union of graphs of globally defined Lipschitz semialgebraic functions $\eta_1<\dots<\eta_b$
defined over $\R^n$ and the functions $\xi_j$ will be of the following form 
$$ \xi_j (q)\left. \right|_{C_i} \sim |y-\eta_{l_{i,j}}|^{w_{i,j}}a_{i,j}(x)\ , $$
where $\{C_i\}$ is a cell subdivision of $\R^{n+1}$ 
consisting of graphs and bands of functions $\eta_j$ over the cells in $\R^{n}$.
Next we apply the inductive hypothesis to the cells in $\R^n$ and the following collection of functions
$$\eta_j,\  |\eta_i-\eta_{j}|,\  \min\{|\eta_i-\eta_{j}|^{w_{i,j}}a_{i,j}(x),1\}\ \text{ for all } i,j\ . $$
 As an output of the inductive step we obtain a deformation retraction $r'$ on $\R^n$.
Set $r$ to be the standard lift of $r'$ (see Definition \ref{std_lift}).
Applying the criterion of Proposition \ref{lip_lift} 
the deformation retraction $r$ is Lipschitz. To complete the inductive step
we prove that condition (\ref{xi_esti_intro}) holds, namely:

\begin{lem}\label{check_xi_ineq}
Suppose that $C\subset\R^{n+1}$ is a cell bounded by graphs of Lipschitz semialgebraic functions $\theta_1<\theta_2$ over a cell $C'\subset\R^n$. Let $r'_t$ be a Lipschitz semialgebraic deformation 
retraction on $C'$ and $r$ be its standard lift.
Assume that $\zeta(q)=|y-\xi(x)|^wa(x)$, $w\in\Q$, is a bounded function such that 
$\xi\leq \theta_1$. Set 
$$ \theta(x):=|\theta_2(x)-\theta_1(x)| \text{ and } \eta(x):=|\xi(x)-\theta_1(x)|\ .$$
If 
\begin{eqnarray}\label{xi_ineq_11}
\min(a(z_1)\eta(z_1)^{w},1) &\lesssim&
\min(a(x)\eta(x)^{w},1),\\ \nonumber
\min(a(z_1)\theta(z_1)^{w},1)&\lesssim&
\min(a(x)\theta(x)^{w},1).           
\end{eqnarray}
then 
$$\zeta(r_t(q))\lesssim \zeta(q).$$
\end{lem}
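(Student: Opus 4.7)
The plan is to unwind the definition of the standard lift to rewrite both $\zeta(q)$ and $\zeta(r_t(q))$ in terms of $\eta$, $\theta$ and the transverse parameter $\tau$, and then reduce the desired estimate to the two truncated hypotheses (\ref{xi_ineq_11}). Let $q = (x,y) \in C$ and set $\tau := (y - \theta_1(x))/\theta(x) \in [0,1]$ and $z_1 := r'_t(x)$. Since $\xi \le \theta_1$,
$$y - \xi(x) = (y - \theta_1(x)) + (\theta_1(x) - \xi(x)) = \tau\,\theta(x) + \eta(x) \ge 0,$$
so $\zeta(q) = a(x)\bigl(\eta(x) + \tau\theta(x)\bigr)^w$. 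By Definition \ref{std_lift} the $(n+1)$st coordinate of $r_t(q)$ equals $\theta_1(z_1) + \tau\theta(z_1)$ and $\tau(r_t(q)) = \tau$, so the same identity gives $\zeta(r_t(q)) = a(z_1)\bigl(\eta(z_1) + \tau\theta(z_1)\bigr)^w$.

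Since both summands are nonnegative, $\eta + \tau\theta \sim \max(\eta, \tau\theta)$, so for $w > 0$,
$$\zeta(r_t(q)) \sim \max\!\bigl(a(z_1)\eta(z_1)^w,\ \tau^w a(z_1)\theta(z_1)^w\bigr),$$
and analogously for $\zeta(q)$. The boundedness of $\zeta$ on $C$, evaluated at the extreme slice $y = \theta_2(x)$ (so $\tau = 1$), gives $a(x)(\eta(x) + \theta(x))^w \lesssim 1$, which for $w > 0$ forces $a(x)\eta(x)^w,\ a(x)\theta(x)^w \lesssim 1$ uniformly in $x$, and the same at $z_1$ since $r_t(q) \in C$. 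Therefore each $\min(\cdot, 1)$ in (\ref{xi_ineq_11}) is comparable to its first argument, and (\ref{xi_ineq_11}) upgrades to
$$a(z_1)\eta(z_1)^w \lesssim a(x)\eta(x)^w, \qquad a(z_1)\theta(z_1)^w \lesssim a(x)\theta(x)^w.$$
Taking componentwise maxima in the preceding equivalence yields $\zeta(r_t(q)) \lesssim \zeta(q)$. The case $w \le 0$ is handled symmetrically, using $(\eta + \tau\theta)^w \sim \min(\eta^w, \tau^w\theta^w)$; here the truncations in (\ref{xi_ineq_11}) are the essential feature, since $a\eta^w$ and $a\theta^w$ can no longer be bounded from the boundedness of $\zeta$.

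The principal obstacle is the careful matching between the truncation $\min(\cdot, 1)$ in (\ref{xi_ineq_11}) and the boundedness of $\zeta$: the truncation is exactly what lets the hypothesis be meaningful when $a\eta^w$ or $a\theta^w$ approaches or exceeds $1$, and for $w > 0$ the uniform bound on $\zeta$ (tested at $y = \theta_2$) is what ensures these quantities remain bounded, so the truncation is inactive up to a multiplicative constant; for $w < 0$ one must instead work within the truncated regime throughout. A secondary bookkeeping point is that in Step~2 the approximation $\eta + \tau\theta \sim \max(\eta,\tau\theta)$ must be combined with the fact that $\tau$ is the \emph{same} at $q$ and at $r_t(q)$, which is the crucial consequence of the form of the standard lift in Definition \ref{std_lift}.
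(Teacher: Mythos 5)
Your argument follows essentially the same route as the paper's: split $|y-\xi(x)| = \tau\theta(x) + \eta(x)$, exploit the invariance $\tau(r_t(q)) = \tau(q)$ from the standard lift, convert $(\eta+\tau\theta)^w$ to a $\max$ (for $w>0$) or $\min$ (for $w<0$) of the two pieces, and feed the result into the truncated hypotheses; you also correctly observe (as the paper does implicitly) that for $w>0$ the boundedness of $\zeta$ lets one drop the truncations entirely. The one place you are terser than the paper is the $w<0$ case, which you dismiss as ``symmetric'': it is not quite, because you still need the small but nontrivial identity $\min(\tau^w b,1) = \min(\tau^w\min(b,1),1)$ (using $\tau^w\geq 1$) to pull $\tau^w$ through the truncation and invoke the hypothesis on $\theta$ — the paper carries this computation out explicitly, and your write-up should too.
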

\begin{proof}
Note that 
$$|y-\xi(x)|=|y-\theta_1(x)|+|\xi(x)-\theta_1(x)|\ .$$
Observe that
\begin{equation}\label{xi_mq_1}
\zeta(q)\sim a(x)\left\{ \begin{array}{ll}
                \min(|y-\theta_1(x)|^{w},
                \eta(x)^{w} )  &  w <0\\
                \max (|y-\theta_1(x)|^{w},
                \eta(x)^{w} )  &  w >0\ .
  
               \end{array} \right.
\end{equation}
Let $z:=z(t)=(z_1(t),z_2(t))$ be the components of the deformation retraction $r=r(q,t)$ where $(z_1(t),z_2(t))\in \R^n\times\R$, $q=(x,y)$. To simplify the notation we will write $(z_1,z_2)$ instead of $(z_1(t),z_2(t))$.
From Definition \ref{std_lift} and (\ref{xi_mq_1}) it follows that: 
\begin{equation}\label{xi_m11}
\zeta(z)= a(z_1)\left\{ \begin{array}{ll}
                 \min\{|\tau(z)\theta(z_1)|^{w}\},
                 \eta(z_1)^{w}\}  &  w <0\\                
                 \max\{|\tau(z)\theta(z_1),\eta(z_1)^w\} & w >0\ .\\
           \end{array} \right.
\end{equation}

Note that $\tau(q)=\tau(z)$.
\newline
If ${w}<0$ then, since $\zeta$ is bounded, it follows 
\begin{equation}\label{xi_m_le01}
\zeta(z) \sim \min\{ \min(a_k(z_1)|\tau(z)\theta(z_1))|^{w},1),
\min(a(z_1)\eta(z_1)^{w},1) \}\ .
\end{equation}

Note that if condition (\ref{xi_ineq_11})  holds for $f_1$ and $f_2$ then it also
holds for $\min\{f_1,f_2\}$. Also if $f$ is a non-negative and bounded function then $f\sim \min(f,1)$.
Therefore, it suffices to prove that 
$$\min(a(z_1)|\tau(z)\theta(z_1))|^{w},1)\lesssim 
\min(a(x)|\tau(q)\theta(x)|^{w},1),$$
and 
$$\min(a(z_1)\eta(z_1)^{w},1) \lesssim
\min(a(x)\eta(x)^{w},1)\ , $$

The latter inequality is a straightforward consequence of our assumption.
For the former inequality, we note that 
$$
\min(a(z_1)\theta(z_1)^{w},1)\lesssim
\min(a(x)\theta(x)^{w},1),           
$$
Therefore,
\begin{eqnarray}\label{xi_m2_2}
\min(a(z_1)|\tau(z)\theta(z_1))|^{w},1) &=&
\min\{\tau(z)^{w} a(z_1)\theta(z_1)^{w},\tau(z)^{w},1\}\nonumber\\&=&
\min\{\tau(z)^{w}\min( a(z_1)\theta(z_1)^{w},1),1\}\nonumber\\&\lesssim&    
\min\{\tau(q)^{w} a(x)\theta(x)^{w},1\}\nonumber\ .         
\end{eqnarray}

Assume now that $w>0$. It follows from the fact that $\zeta$ is bounded, formula (\ref{xi_m11}) and the our assumption
\begin{equation}\label{xi_mge0}
a(z_1)|\tau(z)\theta(z_1)|^{w} \lesssim
a(x)|\tau(q)\theta(x)|^{w}\ .
\end{equation}

Therefore,
\begin{eqnarray*}
\zeta(z) &\sim&
\max\{ (a(z_1)|\tau(z)\theta(z_1)|^{w},
                 a(z_1)\eta(z_1)^{w} \} \\
                 &\lesssim&
\max(a(x)|\tau\theta(x)|^{w},
                 a(x)\eta(x)^{w} )\\ 
                 &\sim&     \zeta(q)\ .
\end{eqnarray*}

\end{proof}


\begin{rk}
The main result of this section is a strengthening of Theorem \ref{lip_retraction_intro}, in which we construct a deformation retraction with various estimates
on its derivatives in terms of the deformation parameter $t$.  This topic is technically the most important part of our work.
\end{rk}

\begin{thm}(Lipschitz deformation retraction theorem)\label{lip_retraction_main}
Let $\Sigma_0$ be a stratification of $\R^n$, $X=\cup X_j$, $X_j\in\Sigma_0$, $0\in\overline X_j\cap X$, $j=1,\dots,m$.
There exist a stratified neighborhood $(U,\Sigma_U)$ of $0$ in $\R^n$ with $\Sigma_U$ a cell subdivision such that $\Sigma_U\prec\Sigma\cap U$ and a Lipschitz semialgebraic deformation retraction $r:U\times I\to U$
such that 
\begin{enumerate}
\item $r_0(x)=0$, $r_1(x)=x$ ,
\item $r|_{S\times(0,1]}$ is smooth ,
\item $|\det Dr_t|\gtrsim t^\mu$, for some $\mu\geq 0$,
\item $\|Dr_t \|\lesssim t^\lambda$ for some $\lambda>0$,
\end{enumerate}
where  $\det Dr_t$ is taken with respect to the coordinates of the respective cell of $\Sigma_U$ 
(see Remark \ref{coords_on_cell}) and $\|Dr_t \|$ denotes the operator max-norm of the tangent map.
\end{thm}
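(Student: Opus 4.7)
The plan is to prove the theorem by induction on the ambient dimension $n$, refining the lifting construction sketched for Theorem \ref{lip_retraction_intro} with quantitative tracking of $|\det Dr_t|$ and $\|Dr_t\|$ as $t \to 0^+$.

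For $n = 1$ the conclusion is immediate: on each cell whose closure contains $0$ the linear retraction $r_t(x) := tx$ furnishes $|\det Dr_t| = t$ and $\|Dr_t\| = t$. For the inductive step in $\R^{n+1} = \R^n_x \times \R_y$, a Lipschitz semialgebraic preparation in the style of \cite{V1} yields, after a bi-Lipschitz change of coordinates, a cell subdivision of $\R^{n+1}$ whose top cells are graphs and bands of globally defined Lipschitz semialgebraic functions $\eta_1 < \cdots < \eta_b$ over a cell subdivision of $\R^n$, compatibly with $\Sigma_0$; after translation we may assume $\eta_i(0) = 0$ for all $i$. Apply the inductive hypothesis to the base, feeding in the auxiliary continuous semialgebraic data
\begin{equation*}
\min\bigl\{|\eta_i - \eta_j|, 1\bigr\}, \qquad \min\bigl\{|\eta_i - \eta_j|^{-1}, M\bigr\} \qquad (1 \leq i < j \leq b),
\end{equation*}
to obtain a Lipschitz semialgebraic deformation retraction $r' : V \times I \to V$ with $|\det Dr'_t| \gtrsim t^{\mu'}$, $\|Dr'_t\| \lesssim t^{\lambda'}$, and the two-sided control
\begin{equation*}
t^{\alpha} \;\lesssim\; \frac{|\eta_j(r'_t(x)) - \eta_i(r'_t(x))|}{|\eta_j(x) - \eta_i(x)|} \;\lesssim\; t^{-\beta}
\end{equation*}
on each band $C'$, for some $\alpha, \beta \geq 0$ independent of the cell; the upper estimate comes from applying Lemma \ref{check_xi_ineq} to $|\eta_i - \eta_j|$, the lower one from applying it (with truncation) to the reciprocal.

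Define $r : U \times I \to U$ as the standard lift of $r'$ to every graph and band cell. The upper bound above is precisely condition (\ref{theta_esti_intro}), so Proposition \ref{lip_lift} makes $r$ Lipschitz; smoothness of $r|_{S \times (0,1]}$ on each cell-stratum is inherited from that of $r'$ together with the smoothness of the $\eta_i$ in the interior. In the adapted coordinates of Remark \ref{coords_on_cell}, $Dr_t$ is block lower-triangular with diagonal blocks $Dr'_t$ and $\partial r_{n+1,t}/\partial y$; on a graph cell the second block equals $1$, and on a band cell with $\theta := \theta_2 - \theta_1$ one has $\partial r_{n+1,t}/\partial y = \theta(r'_t(x))/\theta(x)$, which the two-sided estimate pins between positive powers of $t$. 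Hence $|\det Dr_t| = |\det Dr'_t| \cdot |\partial r_{n+1,t}/\partial y| \gtrsim t^{\mu}$ for some $\mu \geq 0$. The computations in the proof of Proposition \ref{lip_lift} exhibit every remaining entry of $Dr_t$ as a sum of products of uniformly bounded derivatives of the $\eta_i$, entries of $Dr'_t$ which are $O(t^{\lambda'})$, and the ratio $\theta(r'_t(x))/\theta(x)$ which is $O(t^{-\beta})$, so $\|Dr_t\| \lesssim t^{\lambda' - \beta}$. A final bootstrap — re-running the inductive step with suitable positive powers of the auxiliary functions, which preserves the hypothesis of Lemma \ref{check_xi_ineq} — can always enlarge $\lambda'$ so that $\lambda := \lambda' - \beta > 0$, completing the induction.

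The main obstacle is exactly this simultaneous two-sided control of $\theta(r'_t(x))/\theta(x)$ as a power of $t$: a lower bound is needed for the Jacobian estimate (3), while an upper bound is needed both for the Lipschitz criterion (\ref{theta_esti_intro}) and for the operator-norm estimate (4). One must therefore feed \emph{both} $|\eta_i - \eta_j|$ and a truncated reciprocal into the inductive step, verify by Lemma \ref{check_xi_ineq} that $r'$-invariance survives the reciprocal direction, and finally calibrate $\lambda'$ so that the $-\beta$ contribution from the reciprocal does not kill the positivity of $\lambda$. Keeping $\mu$ and $\lambda$ uniform over the finitely many cells clustering at $0$ is what makes the induction close.
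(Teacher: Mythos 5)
Your overall architecture — induction on the ambient dimension, bi-Lipschitz preparation into graphs and bands of Lipschitz functions $\eta_1<\cdots<\eta_b$, the standard lift, and the block-lower-triangular form of $Dr_t$ — is the same as the paper's, but the inductive invariant you propose to carry is not strong enough to close the step. The bottom-right block $\partial r_{n+1,t}/\partial y = \theta(r'_t(x))/\theta(x)$, with $\theta := \theta_{j+1}-\theta_j$, must satisfy an upper bound of the form $\theta(r'_t(x))\lesssim t^{\lambda_\theta}\theta(x)$ with $\lambda_\theta>0$ for the estimate $\|Dr_t\|\lesssim t^\lambda$, $\lambda>0$, to hold. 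Your two-sided control $t^\alpha\lesssim \theta(r'_t(x))/\theta(x)\lesssim t^{-\beta}$ provides an upper bound that is permitted to \emph{blow up} as $t\to 0$; it cannot produce a vanishing $y$-derivative. Moreover, Lemma \ref{check_xi_ineq} is not capable of producing a positive power of $t$: its conclusion is $\zeta(r_t(q))\lesssim\zeta(q)$ with a constant independent of $t$ — the Lipschitz criterion, not a decay rate — so neither the $t^\alpha$ lower bound nor a $t^{\lambda_\theta}$ upper bound comes out of it. Finally, the claimed bootstrap ``re-running the inductive step with suitable positive powers of the auxiliary functions\dots can always enlarge $\lambda'$ so that $\lambda'-\beta>0$'' is not a mechanism; the exponents appearing in semialgebraic preparation are determined by the geometry (\L ojasiewicz-type exponents and Puiseux expansions) and cannot be tuned after the fact by changing which truncated functions you feed in.

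The paper resolves this by strengthening the inductive hypothesis (Theorem \ref{retract}). Besides $\xi_j(r_t(q))\lesssim\xi_j(q)$ and $\xi_j(r_t(q))\gtrsim t^{\mu_j}\xi_j(q)$, it also propagates the sharper statement: whenever $\xi_j$ is continuous near $0$ with $\xi_j(0)=0$, there is $\lambda_j>0$ with $\xi_j(r_t(q))\lesssim t^{\lambda_j}\xi_j(q)$. The base map $r'(x,t)=tx$ visibly satisfies this, and because the cell lies inside a cone through $0$ the function $\theta=|\theta_{j+1}-\theta_j|$ vanishes at $0$; applying the invariant to $\theta$ (and, crucially, also including $d(\cdot,0)$ among the auxiliary functions, reducing a general vanishing $\xi_j$ to it via the \L ojasiewicz inequality $\xi_j = \tilde\xi_j\, d(\cdot,0)^L$) yields precisely the positive-power decay of $\theta(r'_t(x))/\theta(x)$ that your argument lacks. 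Without carrying this invariant through the lift, the estimate $\|Dr_t\|\lesssim t^\lambda$ with $\lambda>0$ does not follow.
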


We prove this theorem in Section \ref{sec_Lip_ret}.

\begin{rk}\label{on_xi_ineq}
In contrast, condition (\ref{xi_esti_intro}) and the functions $\xi_j$ of Theorem \ref{lip_retraction_intro} are absent in the statement of 
Theorem \ref{lip_retraction_main}:
condition (\ref{xi_esti_intro}) was included only to carry the inductive step of the proof of
Theorem \ref{lip_retraction_intro} and not 
for our applications. The proof of Theorem \ref{lip_retraction_main} is by induction similar to that of the proof of Theorem \ref{lip_retraction_intro}.
It involves functions analogous to the functions $\xi_j$ of Theorem \ref{lip_retraction_intro} and inequalities strengthening condition (\ref{xi_esti_intro}).
\end{rk}




\section{Regular families of hypersurfaces and bi-Lipschitz homeomorphisms}\label{sec_reg_fam}
This section contains preliminaries made use of in Section \ref{sec_Lip_ret} in order to construct 
a Lipschitz semialgebraic deformation retraction on a semialgebraic set with control
on the growth of the derivatives. 

In our sketch of proof of Theorem \ref{lip_retraction_intro} we did not include  
an explanation on how to construct a bi-Lipschitz transformation of the ambient $\R^{n+1}$ that maps a given set with empty interior to a subset of a union of a finite number of graphs of Lipschitz semialgebraic functions. Construction of such a bi-Lipschitz map was essentially obtained by G.Valette
in \cite{V1}, but we include it for the sake of completeness. 

In our main Theorem \ref{bi_Lip_cone_2} of this section we 
start with a stratification $\Sigma$ of $\R^{n+1}$ and construct a bi-Lipschitz 
transformation $h$ of $\R^{n+1}$ that maps a given cone, to a perhaps larger cone, such that 
the restriction of $h$ to a certain refinement $\A\prec\Sigma$  is a diffeomorphism.
Moreover, the images of the strata in $\A$ are graphs of Lipschitz semialgebraic functions 
or bands over cells in $\R^n$.

In what follows we will use the following notations. Let $e_i$ , $i=1,...,n$ to be the standard basis and $S^{n-1}$ the unit sphere of $\R^n$.
For $\lambda \in S^{n-1}\subset \R^n$, 
we denote by $N_\lambda$ the orthogonal to $\lambda$ subspace of $\R^n$ (shortly $N_\lambda:=\lambda^{\perp}$) and 
by $\pi_\lambda$ the projection onto
$N_\lambda$ along $\lambda$. 
Given $q \in \R^n$, we denote by $q\mapsto q_\lambda$ the standard  scalar product (in $\R^n$)  with  $\lambda$.
We say that a set $H\subset\R^{n+1}$ is a graph {\bf relative to $\lambda$} if there exists a function
$\xi : N_\lambda \to \R $ such that
$$ H=\{ q\in\R^{n+1} : q_\lambda=\xi(\pi_\lambda(q)) \} .$$


\begin{df} A {\bf Lipschitz cell decomposition of $\R^n$} is a cylindrical cell decomposition $\C$ of $\R^n$ which is also a stratification and is such that for $n>1$ each cell $C\in\C$ is either a graph of a Lipschitz semialgebraic function or a band bounded by two Lipschitz semialgebraic functions over some cell $C'$ in $\R^{n-1}$.
The vector $e_n$ is said to be {\bf regular} for $\C$ if for each cell $C\in\C$
the restriction to $C$ of $\pi_n:=\pi_{e_n}$ is a one-to-one map and, also, there exists a Lipschitz function $\xi:\pi_n(C)\to \R$
such that $C$ is the graph of $\xi$ over $\pi_n(C)$.
\end{df}

We will need the following results from [V1] for our construction of the bi-Lipschitz transformation of this section.

\begin{df}\label{family regulier d hypersurface}
A {\bf regular family of hypersurfaces} of $\R^{n+1}$ is a family
$H=(H_k;\lambda_k)_{1 \leq k \leq b}$ , $b\in\N$, of hypersurfaces of
$\R^{n+1}$ together with elements $\lambda_k$ of $S^n$ such that the following
properties hold for each $k < b $:

\begin{itemize}
 \item[(i)]  The consecutive pairs $H_k$ and $H_{k+1}$ are  the
 graphs
 relative to $\lambda_k$ of two global Lipschitz functions $\xi_k$ and, respectively, $\xi'_k$
 such that $\xi_k \leq \xi'_k$ ;
\item[(ii)]  $E(H_{k+1};\lambda_k)=E(H_{k+1};\lambda_{k+1}), $
where 
$E(H_k;\lambda_k)=\{ q\in\R^{n+1} : q_\lambda \leq \xi( \pi_\lambda (q)) \} \ .$
\end{itemize}
Let $A$ be a semialgebraic subset of $\R^{n+1}$ of empty interior. We say that
the family $H$ is {\bf compatible} with $A$, if $A \subseteq
\bigcup_{k=1} ^{b} H_k$. An {\bf extension} of $H$ is a regular
family compatible with the set $\bigcup_{k=1} ^b H_k$.
\end{df}


We  will  also make use of the following notation
\begin{notation}
Let $\lambda\in S^{n-1}$ and $M\in[0,1)$. 
We denote
$$ C_{n}(\lambda,M):=\{q\in\R^n : \frac{q\cdot\lambda}{|q|}\geq M\}\subset \R^n\ $$
(which are cones centered at $0$ with 
the axis being $\lambda$).
\end{notation}
Given two functions $f,g: A\to\R$ we say that $f$ is {\bf equivalent} to $g$,  $f\sim g$,
if there exist $c_1>0$ and $c_2>0$ such that $c_1f\leq g\leq c_2f$.
If $f \leq c_1 g$, we write $f\lesssim g$. 
We say that $f$ is {\bf comparable} with $g$ if
the difference $f-g$ has a constant sign.


\begin{thm} \label{prop_3_10} {( Proposition 3.10 [V1])}
For each semialgebraic set $A\subset\R^{n+1}$ with empty interior and $\eps>0$  there exists
a regular family $(H_k;\lambda_k)_{1 \leq k \leq b}$ of hypersurfaces of $\R^{n+1}$ compatible with $A$ and such that $\lambda_k\cdot e_{n+1}>1-\eps$, $1\leq k \leq b$ .
\end{thm}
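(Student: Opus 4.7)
The plan is to construct the regular family in three stages. First, I would apply the semialgebraic cylindrical cell decomposition theorem to $\R^{n+1}$ compatibly with $A$; since $A$ has empty interior, it lies in a finite union of cells of dimension at most $n$. After refinement, each $n$-dimensional such cell can be assumed to be the graph of a smooth semialgebraic function over an open cell in $\R^n$, so $A$ is covered by finitely many Lipschitz hypersurfaces.

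Second, for each hypersurface $H$ so obtained, I would refine the decomposition further so that the unit normals of $H$ vary within a small cone, and then select a direction $\lambda$ with $\lambda\cdot e_{n+1}>1-\eps$ such that $H$ is globally the graph of a Lipschitz semialgebraic function with respect to $\lambda$. The essential local fact I would use is that a Lipschitz graph in one direction remains a Lipschitz graph in any sufficiently close direction (with a possibly worse constant), so by a genericity argument a suitable $\lambda$ exists for each piece while staying in the cone around $e_{n+1}$.

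Third, I would index the resulting hypersurfaces as $H_1,\dots,H_b$ and order them along $e_{n+1}$. After further subdivision over the $\R^n$ factor to ensure that distinct hypersurfaces do not cross over a common base cell, consecutive pairs $H_k,H_{k+1}$ become two graphs satisfying condition (i) relative to a common direction $\lambda_k$ (the one assigned to $H_k$). Condition (ii), namely $E(H_{k+1};\lambda_k)=E(H_{k+1};\lambda_{k+1})$, then follows from the observation that both $\lambda_k$ and $\lambda_{k+1}$ lie in the narrow cone $\{\lambda : \lambda\cdot e_{n+1}>1-\eps\}$ and $H_{k+1}$ is a Lipschitz graph with respect to either, so its lower half-space does not depend on which of the two directions is used to define it.

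The main obstacle will be the compatible refinement in the third stage: ensuring that no other hypersurface lies between $H_k$ and $H_{k+1}$ while simultaneously preserving the graph property with respect to all the chosen directions $\lambda_j$ requires a careful joint refinement of the base cell decomposition of $\R^n$ compatible with all the projections $\pi_{\lambda_j}$. This is feasible precisely because the angular spread of the $\lambda_j$'s is controlled by $\eps$; if $\eps$ were too large, the simultaneous graph property across the refinement could fail, and the induction on dimension implicit in stage one would no longer close up.
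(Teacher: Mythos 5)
The paper does not contain a proof of this statement; it cites it verbatim from Valette's Lipschitz triangulations paper [V1] (Proposition 3.10). Comparing your outline to Valette's argument, there are two gaps that are genuine and not merely matters of polish.

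First, your stage one claims that after cylindrical cell decomposition compatible with $A$, the $n$-dimensional cells are ``graphs of smooth semialgebraic functions'' and hence $A$ ``is covered by finitely many Lipschitz hypersurfaces.'' The implication fails: the semialgebraic functions whose graphs and bands form a cylindrical decomposition are continuous, and after refinement $C^1$, but their derivatives blow up near the boundaries of the base cells in general, so the cells are not Lipschitz graphs. Producing Lipschitz pieces is exactly where the real work lies. In [V1] this is done via the notion of a \emph{regular direction}: one shows that for a semialgebraic set with empty interior a generic unit vector $\lambda$ satisfies $d(\lambda; T_xA_{reg})\geq\alpha>0$ uniformly over the regular points, and that this uniform transversality makes each sheet of $A$ over a connected base a Lipschitz graph in the $\lambda$ direction. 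Your second stage (``refine so the normals vary in a small cone'') is gesturing at this, but without the regularity lemma there is no reason a direction $\lambda$ near $e_{n+1}$ works for a given piece; indeed if $e_{n+1}$ itself fails to be regular for $A$ (e.g.\ if $A$ contains a hyperplane through $e_{n+1}$) the conclusion with directions constrained to $\lambda_k\cdot e_{n+1}>1-\eps$ is simply unattainable, so a genericity hypothesis on $e_{n+1}$ is tacitly in force and must be addressed.

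Second, and this is the step your outline omits entirely: Definition~\ref{family regulier d hypersurface} requires each $H_k$ to be the graph over the whole hyperplane $N_{\lambda_k}$ of a \emph{global} Lipschitz function $\xi_k:N_{\lambda_k}\to\R$. Any cell-based construction produces Lipschitz functions defined only over bounded open cells in $\R^n$. One must then extend these to global semialgebraic Lipschitz functions on $N_{\lambda_k}$ while simultaneously preserving the total order $\xi_k\leq\xi'_k$ between consecutive sheets, the non-crossing of the resulting hypersurfaces away from the original cells, and the graph property with respect to $\lambda_{k+1}$ as well as $\lambda_k$ so that condition (ii) even typechecks. This extension-with-consistency is the technical heart of the proof in [V1] (and is why the argument is an induction on the ambient dimension rather than a single decomposition in $\R^{n+1}$). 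Your observation that (ii) should hold because $H_{k+1}$ is a Lipschitz graph for both nearby directions $\lambda_k,\lambda_{k+1}$ is sound in spirit, but it presupposes a quantitative bound tying the angular spread (governed by $\eps$) to the Lipschitz constants of the $\xi_k$; since you haven't controlled those constants, the ``main obstacle'' you flag at the end of stage three is in fact not a separate refinement issue but a symptom of the missing globalization step.
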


Given a regular system of hypersurfaces $H:=\hk$ in $\R^{n+1}$ 
we associate with it a bi-Lipschitz map $h_H:\R^{n+1}\to\R^{n+1}$ (Proposition 3.13 [V1]). 
For completeness we give the construction of $h_H$ below.

\begin{prop}\label{prop313}(Proposition 3.13 [V1] )
Let $H:=\hk$ be a regular system of hyperplanes in $\R^{n+1}$.
There exists a bi-Lipschitz mapping $h_H:\R^{n+1}\to\R^{n+1}$ 
that maps each hypersurface $H_k$  
to a hypersurface $F_k$ which is a graph of a Lipschitz semialgebraic function $\eta_k$ for $e_n$.
\end{prop}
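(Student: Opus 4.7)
I would prove Proposition~\ref{prop313} by induction on $b$, the length of the regular family, constructing $h_H$ as a composition of piecewise bi-Lipschitz transformations that straighten the hypersurfaces one at a time from the bottom up. For the base case $b=1$, $H_1$ is the graph of a Lipschitz semialgebraic function $\xi_1$ relative to $\lambda_1$, so I take $h_H$ to be any orthogonal transformation $R$ of $\R^{n+1}$ sending $\lambda_1$ to $e_{n+1}$. Then $R$ is globally bi-Lipschitz and semialgebraic, and $R(H_1)$ is the graph of $\eta_1 := \xi_1 \circ R^{-1}|_{N_{e_{n+1}}}$ relative to $e_{n+1}$.

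For the inductive step, apply the inductive hypothesis to the subfamily $(H_k;\lambda_k)_{1 \le k \le b-1}$ to obtain a bi-Lipschitz map $g$ straightening $H_1,\dots,H_{b-1}$ into graphs $F_1,\dots,F_{b-1}$ of Lipschitz semialgebraic functions $\eta_1 \le \cdots \le \eta_{b-1}$ relative to $e_{n+1}$. The crucial use of the regular family axioms occurs here: by condition (ii) of Definition~\ref{family regulier d hypersurface} applied at $k = b-1$, one has $E(H_{b-1};\lambda_{b-2}) = E(H_{b-1};\lambda_{b-1})$, so the image of this set under $g$ is unambiguously $E(F_{b-1};e_{n+1})$. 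Since $(H_{b-1},H_b)$ is a graph pair relative to $\lambda_{b-1}$, the image $g(H_b)$ lies above $F_{b-1}$ and is a Lipschitz graph relative to a direction $\mu$ into which $g$ transports $\lambda_{b-1}$. I then define a second bi-Lipschitz map $\tilde g$ as the identity on $E(F_{b-1};e_{n+1})$, and above $F_{b-1}$ as a suitable orthogonal rotation sending $\mu$ to $e_{n+1}$ (translated to glue continuously along $F_{b-1}$). Setting $h_H := \tilde g \circ g$ completes the induction.

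The main obstacle is verifying that $\tilde g$ is globally bi-Lipschitz, as it is defined piecewise across the Lipschitz hypersurface $F_{b-1}$. This relies on the hypothesis $\lambda_k \cdot e_{n+1} > 1 - \varepsilon$ supplied by Theorem~\ref{prop_3_10} with $\varepsilon$ small: every rotation involved then has small angle, and the gluing of the identity with such a small rotation along a Lipschitz graph preserves the bi-Lipschitz property with uniformly controlled constants. A standard gluing lemma for semialgebraic piecewise bi-Lipschitz maps meeting along a Lipschitz hypersurface then yields that $h_H$ is globally bi-Lipschitz, while semialgebraicity and the graph property of the images $F_k$ are preserved by construction.
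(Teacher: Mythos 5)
Your overall structure (straighten the hypersurfaces region by region, using property (ii) to make the construction consistent) is in the same spirit as the paper's, but the inductive step has two serious gaps that the paper's construction is specifically designed to avoid.

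First, the inductive hypothesis as you state it only asserts existence of \emph{some} bi-Lipschitz $g$ with $g(H_k)=F_k$ for $k\le b-1$; it says nothing about how $g$ behaves on the region above $H_{b-1}$. There is therefore no basis for the claim that ``$g(H_b)$ is a Lipschitz graph relative to a direction $\mu$ into which $g$ transports $\lambda_{b-1}$''---$g$ is a nonlinear piecewise map and need not ``transport'' $\lambda_{b-1}$ to any single direction at all, and different admissible choices of $g$ can send $H_b$ to wildly different sets. You would need to strengthen the induction to carry extra information about the action of $g$ above the top straightened hypersurface.

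Second, and more fundamentally, the gluing map $\tilde g$ does not exist in the form you describe. No orthogonal rotation (or rotation composed with a translation) can fix the Lipschitz hypersurface $F_{b-1}$ pointwise unless it is the identity, so the only way to ``glue continuously'' is to rotate each point $q$ about a moving base point of $F_{b-1}$ (e.g.\ the point directly below $q$). That is no longer an isometry; it is a genuinely nonlinear map whose bi-Lipschitz constant depends on the Lipschitz constant of $\eta_{b-1}$ as well as the rotation angle, and the ``small angle'' heuristic does not by itself give the needed two-sided bound. You would have to prove a nontrivial gluing lemma, which is exactly the difficulty the paper sidesteps.

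The paper instead defines $h_H$ on $E(H_{k+1};\lambda_k)\setminus E(H_k;\lambda_k)$ by a shear: it projects $q$ along $\lambda_k$ onto $H_k$, applies the already-built $h_H$ to that point of $H_k$, and then adds the height $(q_{\lambda_k}-\zeta_k\circ\pi_{\lambda_k}(q))$ times $e_n$. No rotation appears. This shear automatically sends $H_{k+1}$ (a graph for the \emph{same} $\lambda_k$, by property (i)) to the graph of $\eta_{k+1}=\eta_k\circ\pi_{e_n}+(\zeta'_k-\zeta_k)\circ\pi_{\lambda_k}\circ h^{-1}(\cdot;\eta_k\circ\pi_{e_n})$ for $e_n$, and it is manifestly globally Lipschitz because $\zeta_k$ is; the inverse is Lipschitz for the same reason. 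Property (ii) is used to see that this piecewise definition is consistent, i.e.\ that $E(H_{k+1};\lambda_k)=E(H_{k+1};\lambda_{k+1})$, so the next step can reuse $\lambda_{k+1}$. If you recast your induction around this shear instead of a rotation, the gluing problem and the ``transport of $\lambda_{b-1}$'' problem both disappear, and the argument matches the paper's.
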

\begin{proof}
 We define $h_H$
over $E(H_k;\lambda_k)$  by induction on $k$ in such a way that
\begin{equation}\label{star1}
h_H(E(H_k;\lambda_k))=E(F_k ;e_n)\ ,
\end{equation} 
where
$F_k$ is the graph of a Lipschitz function $\eta_k$ relative to $e_n$\ . 
Note that then $h_H(H_k)=F_k$.

For $k=1$ choose an orthonormal basis in $N_{\lambda_1}$ and set
$h_H(q)=(x_{\lambda_1};q_{\lambda_1})$, where $x_{\lambda_1}$
 are the coordinates of $\pi_{\lambda_1}(q)$ in this basis.
Then, let $k \geq 1$ and assume that $h_H$ has been already
constructed on $E(H_k;\lambda_k)$. 
Property $(i)$ of Definition \ref{family regulier d hypersurface} 
says that $H_k$ and
$H_{k+1}$ are the graphs relative to the same $\lambda_k$ of two Lipschitz
functions $\zeta_k$ and $\zeta'_k$. For $q \in E(H_{k+1};\lambda_{k})
\setminus E(H_k;\lambda_k)$ set 
$$h_H(q):=h_H(\pi_{\lambda_k}(q);\zeta_k \circ \pi_{\lambda_k}(q))
+(q_{\lambda_k}-\zeta_k \circ \pi_{\lambda_k}(q))e_n\ .$$

Due to property $(ii)$ of Definition \ref{family regulier
d hypersurface} we have $ E(H_{k+1};\lambda_{k+1})=
E(H_{k+1};\lambda_{k})$, so that  $h_H$ turns out to be defined over $E(H_{k+1};\lambda_{k+1})$. Since $\zeta_k$ is Lipschitz $h_H$ a
bi-Lipschitz homeomorphism. Note also that (\ref{star1}) holds with $F_{k+1}$ a graph of the following Lipschitz function
$$\eta_{k+1}(q)=\eta_k \circ \pi_{e_n}(q)+(\zeta'_k-\zeta_k)
\circ \pi_{\lambda_k}\circ h^{-1}(q;\eta_k \circ \pi_{e_n}(q))\ .$$
We now constructed $h_H$ over $E(H_b ; \lambda_b)$. To extend $h_H$ to the
whole of $\R^n$ we follow the case of $k=1$ (use $\lambda_b$ instead of $\lambda_1$). Now it is straightforward to verify that $h_H$ is a bi-Lipschitz homeomorphism.
\end{proof}


The following theorem is the main theorem of this section (cf. \cite{V2} Corollary 2.2.4).
\begin{thm} \label{bi_Lip_cone_2}
Let $\Sigma$ be a stratification of $\R^{n+1}$ compatible with a cone $C:=C_{n+1}(e_1,M)$.
There exists a refinement $\A\prec \Sigma$ and a bi-Lipschitz map $h:\R^{n+1}\to\R^{n+1}$
such that 
\begin{enumerate}[(1)]
\item $h|_A$ is diffeomorphism for all $A\in\A$,
\item  there exists $0<M'<1$ such that $h(C)\subset C_{n+1}(e_1,M')$,
\item  every stratum $h(A)$, $A\in\A$, is either a graph of a Lipschitz semialgebraic function 
or a band bounded by graphs of Lipschitz semialgebraic functions over a stratum 
$h(A')$ in $\R^n$, $A'\in\A$.
\end{enumerate}
\end{thm}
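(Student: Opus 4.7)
The plan is to reduce to the Valette preparation machinery stated above. Let $A\subset\R^{n+1}$ be the union of the strata of $\Sigma$ of dimension at most $n$ together with the topological boundary $\partial C$. Since the top-dimensional strata of $\Sigma$ are open and disjoint from $A$, the set $A$ has empty interior in $\R^{n+1}$. For a small $\eps>0$ to be fixed later, Theorem \ref{prop_3_10} supplies a regular family $H=(H_k;\lambda_k)_{1\leq k\leq b}$ of hypersurfaces of $\R^{n+1}$ compatible with $A$ satisfying $\lambda_k\cdot e_{n+1}>1-\eps$. Proposition \ref{prop313} then produces a bi-Lipschitz homeomorphism $h:=h_H:\R^{n+1}\to\R^{n+1}$ mapping each $H_k$ to the graph $F_k$ of a globally defined Lipschitz semialgebraic function $\eta_k$ relative to $e_{n+1}$.

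I would define the refinement as $\A:=h^{-1}(\tilde\A)$, where $\tilde\A$ is the cylindrical Lipschitz cell subdivision of $\R^{n+1}$ whose base is a cell subdivision of $\R^n$ compatible with the singular loci of all the $\eta_k$ and with the projections of the $h$-images of the strata of $\Sigma$, and whose fibers are the graphs $F_k$ together with the bands between consecutive graphs $F_k$, $F_{k+1}$. Since $A\subseteq\bigcup_k H_k$ and $h(H_k)=F_k$, every stratum of $\Sigma$ decomposes as a union of cells of $\A$, so $\A\prec\Sigma$; item (3) holds by the very shape of $\tilde\A$. For item (1), the piecewise inductive formulas defining $h$ in the proof of Proposition \ref{prop313} involve only the auxiliary Lipschitz functions $\zeta_k$, which after the base refinement are smooth on each cell; hence $h$ restricts to a smooth bi-Lipschitz map, and therefore to a diffeomorphism, on each $A\in\A$.

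The main obstacle is item (2). The key observation is that in the idealized case $\lambda_k=e_{n+1}$ for every $k$, the inductive formula in the proof of Proposition \ref{prop313} reduces to a piecewise vertical shear
\[
(x_1,\dots,x_n,x_{n+1})\mapsto(x_1,\dots,x_n,x_{n+1}-\phi(x_1,\dots,x_n)),
\]
which fixes the first $n$ coordinates and hence sends $C=C_{n+1}(e_1,M)$ into $C_{n+1}(e_1,M/L)$, where $L$ denotes the bi-Lipschitz constant of $h$. For general $\lambda_k$ close to $e_{n+1}$, the first $n$ coordinates $h(q)\cdot e_j$ differ from $q\cdot e_j$ only by a quantity of size $\eps$ times the Lipschitz data accumulated through the $b$ shears, yielding an estimate $|h(q)\cdot e_1-q\cdot e_1|\leq\kappa(\eps)|q|$ with $\kappa(\eps)\to 0$ as $\eps\to 0$. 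Combining with $q\cdot e_1\geq M|q|$ on $C$ and $|h(q)|\leq L(\eps)|q|$, this gives
\[
\frac{h(q)\cdot e_1}{|h(q)|}\geq\frac{M-\kappa(\eps)}{L(\eps)}.
\]
The delicate point is the joint dependence of $L(\eps)$ and $\kappa(\eps)$ on $\eps$: as $\eps$ shrinks, the family $H$ may grow in complexity and $L(\eps)$ may inflate, so one must appeal to the quantitative content of the proof of Theorem \ref{prop_3_10} in \cite{V1} to secure a choice of $\eps$ for which $M-\kappa(\eps)>0$ while $L(\eps)$ remains finite, yielding the required $M':=(M-\kappa(\eps))/L(\eps)\in(0,1)$.
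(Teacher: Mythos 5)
Your construction of $h$ is the same as the paper's: apply Theorem \ref{prop_3_10} to a set of empty interior containing the boundaries (the paper takes the topological boundaries of the strata of $\Sigma$, you take those plus $\partial C$; same idea), then apply Proposition \ref{prop313}. Where the proofs diverge is in the verification of items (1) and (2), and in both places your argument has a real gap that the paper's chosen route is specifically designed to avoid.

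For item (2), the paper does not argue by perturbing $h_H$ towards a vertical shear. It proves Proposition \ref{cone_to_cone}, which exploits the explicit factorization $\pi_{n+1}\circ h_H|_{H_k}=\pi_{\lambda_1}\circ\pi_{H_1}\circ\dots\circ\pi_{H_{k-1}}$ and tracks the cone constant step by step through Lemma \ref{lip_cone} (a graph of an $L$-Lipschitz function vanishing at $0$ takes a cone of angle $M$ to a cone of angle $M/(1+L)$ — a \emph{multiplicative} loss that is always positive) and Lemma \ref{cone_incl} (changing the reference axis from $e_{1,\lambda}$ to $e_{1,\lambda_1}$ costs an \emph{additive} term of order $\sqrt{2\eps}/(1-\eps)$). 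This separation of the loss into a harmless multiplicative piece (coming from the Lipschitz constants) and a small additive piece (coming from $\eps$) is exactly what lets one close the argument: the denominator in your estimate $(M-\kappa(\eps))/L(\eps)$ is never the obstruction, only the numerator is, and the paper's lemmas make the numerator's dependence on $\eps$ explicit and independent of $L$. Your version leaves the entire burden on the unverified assertion that $\kappa(\eps)$ can be pushed below $M$ before $L(\eps)$ blows up, which you correctly flag as the delicate point; the paper's proposition \ref{cone_to_cone} is precisely the ingredient you would need to make this rigorous.

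For item (1), defining $\A:=h^{-1}(\tilde\A)$ with $\tilde\A$ a cylindrical decomposition relative to $e_{n+1}$ is not obviously enough. The map $h$ is built inductively by projecting along the varying directions $\lambda_k$: on the slab between $H_k$ and $H_{k+1}$ the formula for $h(q)$ involves $h|_{H_k}\circ\pi_{H_k}(q)$ and $\zeta_k\circ\pi_{\lambda_k}(q)$, so smoothness of $h$ on a cell $A$ requires that $\pi_{H_k}(A)$ lie in a cell of $H_k$ on which $h|_{H_k}$ is already known to be smooth, and so on recursively down the stack. Your ``base refinement compatible with the singular loci of the $\eta_k$'' does not visibly produce this compatibility of the $\lambda_k$-fiber structure at all levels simultaneously. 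The paper's nested families $\F_k=\{\A_{1,k},\dots,\A_{k,k}\}$, with each $\A_{k-j,k+1}$ refined to be compatible with $\pi_{\lambda_{k-j}}^{-1}(A)\cap H_{k-j}$ for $A\in\A_{k-j+1,k+1}$, is exactly the bookkeeping that makes item (1) go through; this step should not be omitted.
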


For our proof the following proposition is crucial.  

\begin{prop}\label{cone_to_cone}
Let $A\subset\R^{n+1}$ be a semialgebraic set and let $H=\hk$
be a regular system of hyperplanes compatible with $A$ such that $\lambda_k\cdot e_{n+1}>1-\eps$.
If $0\in A\subset C_{n+1}(e_1,M)$ then there exists $M'$ such that $h_H(A)\subset C_{n+1}(e_1,M')$,
 where $h_H$ is provided by Proposition \ref{prop313}.
\end{prop}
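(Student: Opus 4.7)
The strategy is to trace through the explicit construction of $h_H$ given in the proof of Proposition \ref{prop313} and show that it approximately preserves the $e_1$-coordinate; combined with the bi-Lipschitz bound $|h_H(q)| \lesssim |q|$ (using $h_H(0)=0$), this will yield the desired cone preservation.

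First, I would exploit the freedom in choosing the orthonormal basis of $N_{\lambda_1}$ used in the base step. Take $(v_1,\dots,v_n)$ with $v_1$ the unit orthogonal projection of $e_1$ onto $N_{\lambda_1}$. The hypothesis $\lambda_1\cdot e_{n+1} > 1-\eps$ forces $|\lambda_1\cdot e_1|^2 \leq 1-(1-\eps)^2 \leq 2\eps$, whence
\[
v_1\cdot e_1 \;=\; \sqrt{1-(\lambda_1\cdot e_1)^2} \;\geq\; 1-\eps,
\]
so $v_1$ is close to $e_1$. With this choice, the base step gives $h_H(q)\cdot e_1 = \pi_{\lambda_1}(q)\cdot v_1 = q\cdot v_1$ for every $q\in E(H_1;\lambda_1)$.

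Next I would induct on the recursion level. Assuming $n\geq 2$, each inductive step adds a correction in a direction orthogonal to $e_1$ (the $e_n$ in the formula of Proposition \ref{prop313} being orthogonal to $e_1$), so $h_H(q)\cdot e_1 = h_H(q')\cdot e_1$, where $q' = \pi_{\lambda_k}(q) + \zeta_k(\pi_{\lambda_k}(q))\lambda_k\in H_k$ is the intermediate projection. Unfolding the recursion yields
\[
h_H(q)\cdot e_1 \;=\; q\cdot v_1 \;+\; \sum_{j} (\lambda_{k_j}\cdot v_1)\bigl(\zeta_{k_j}(\pi_{\lambda_{k_j}}(q^{(j-1)})) - q^{(j-1)}\cdot\lambda_{k_j}\bigr),
\]
where the $q^{(j)}$ are the successive intermediate points. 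Each factor $|\lambda_k\cdot v_1|$ is of order $\sqrt{\eps}$, since $v_1$ is close to $e_1$ and $\lambda_k$ is close to $e_{n+1}$ while $e_1\perp e_{n+1}$; each bracket is $O(|q|)$ via the Lipschitz bound on $\zeta_k$ and the fact that the hypersurfaces relevant to the local picture near $0$ satisfy $\zeta_k(0)=0$ (those not through $0$ contribute trivially since $q$ lies consistently on one side). Hence the total error is $O(\sqrt{\eps}\,|q|)$.

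For $q\in A\subset C_{n+1}(e_1,M)$ we have $q\cdot e_1 \geq M|q|$, so that $q\cdot v_1 \geq (1-\eps)M|q| - \sqrt{2\eps}\,|q|$, and the error estimate yields $h_H(q)\cdot e_1 \geq c|q|$ for some $c>0$ once $\eps$ is sufficiently small. Combining with $|h_H(q)|\leq L|q|$ from the bi-Lipschitz bound gives $h_H(A)\subset C_{n+1}(e_1, c/L)$, with $M':=c/L>0$. The case $n=1$ is handled directly by an analogous $2$-dimensional calculation. The main obstacle is the careful bookkeeping of the iterated recursion---ensuring that the intermediate points $q^{(j)}$ stay comparable in norm to $|q|$ and that the accumulated errors combine favorably---which relies on the proximity of all $\lambda_k$ to $e_{n+1}$ together with the local structure of the hypersurfaces at the origin.
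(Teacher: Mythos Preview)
Your approach is genuinely different from the paper's. The paper first reduces to the case $A\subset H_k$ for a single $k$, observes the identity $\pi_{n+1}\circ(h_H|_{H_k})=\pi_{\lambda_1}\circ\pi_{H_1}\circ\cdots\circ\pi_{H_{k-1}}$, and then iterates a geometric lemma: each projection $\pi_{H_j}$ along $\lambda_j$ carries a cone about $e_1$ into a slightly wider cone about $e_1$, provided $\lambda_j$ is close to $e_{n+1}$ and $0\in H_j$. A final lemma lifts the conclusion from the projection back to the graph $F_k=h_H(H_k)$. This is a modular argument built from three preparatory lemmas, whereas you compute $h_H(q)\cdot e_1$ directly by unwinding the recursion and exploiting a well-chosen basis of $N_{\lambda_1}$. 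Both routes are viable, and yours makes the dependence on $\eps$ more explicit.

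Two remarks. First, the correction term in the recursion of Proposition~\ref{prop313} should be read as a multiple of the \emph{last} coordinate vector $e_{n+1}$ (the paper's ``$e_n$'' there is a notational slip); it is therefore orthogonal to $e_1$ for all $n\geq 1$, and your separate treatment of the case $n=1$ is unnecessary. Second, your parenthetical claim that hypersurfaces not through $0$ ``contribute trivially since $q$ lies consistently on one side'' is not correct as stated: the recursion for $h_H(q)$ with $q\in H_m$ projects successively through \emph{every} $H_j$ with $j<m$, regardless of whether $0\in H_j$, so such $H_j$ do appear in your sum. What you actually need---and what the paper also tacitly assumes when invoking its lemmas---is that each $\zeta_j$ vanishes at the origin, so that the intermediate points satisfy $|q^{(j)}|\lesssim|q|$ and $h_H(0)=0$. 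This hypothesis is implicit in the intended application, but it is not part of the proposition as written; you should either state it explicitly or give a separate argument controlling the brackets when some $\zeta_j(0)\neq 0$.
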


Our proof of the latter proposition will make use of the following 3 lemmas.
\begin{df}
Assume that $S\subset\R^{n+1}$ is a graph of a function $\xi$ relative to $\lambda\in S^{n}$.
Let map $\pi_{S}:\R^{n+1}\to S$ to be defined by
$\pi_{S}(q):=\pi_{\lambda}(q) + \xi(\pi_{\lambda}(q))\lambda$, 
\end{df}

\begin{lem}\label{lip_cone}
Let $A'\subset C_{n}(e_1,M)$ and $\xi:A'\to \R$, $\xi(0)=0$ be a 
Lipschitz semialgebraic function with Lipschitz constant $L$.
Then, $\Gamma_\xi(A')\subset C_{n+1}(e_1,M/(1+L))$.
\end{lem}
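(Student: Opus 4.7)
The plan is to unpack the definition of the cone $C_{n+1}(e_1, M/(1+L))$ applied to a point of the form $(x,\xi(x))$ with $x \in A'$, and then produce the required inequality by combining the cone condition on $A'$ with the Lipschitz bound on $\xi$ (anchored at $\xi(0)=0$).

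First I would note that a point $q = (x,\xi(x)) \in \R^n \times \R$ lies in $C_{n+1}(e_1, M/(1+L))$ precisely when
\[
\frac{q \cdot e_1}{|q|} \;=\; \frac{x_1}{\sqrt{|x|^2 + \xi(x)^2}} \;\geq\; \frac{M}{1+L}.
\]
The hypothesis $A' \subset C_n(e_1,M)$ gives $x_1 \geq M|x|$, so the numerator is controlled from below. For the denominator, since $\xi(0)=0$ and $\xi$ is $L$-Lipschitz,
\[
|\xi(x)| = |\xi(x) - \xi(0)| \leq L|x|,
\]
hence $|x|^2 + \xi(x)^2 \leq (1+L^2)|x|^2$.

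Combining these yields
\[
\frac{x_1}{\sqrt{|x|^2 + \xi(x)^2}} \;\geq\; \frac{M|x|}{\sqrt{1+L^2}\,|x|} \;=\; \frac{M}{\sqrt{1+L^2}}.
\]
Finally, the elementary inequality $\sqrt{1+L^2} \leq 1+L$ (from $(1+L)^2 = 1 + 2L + L^2 \geq 1 + L^2$ for $L \geq 0$) gives $M/\sqrt{1+L^2} \geq M/(1+L)$, which closes the argument. The only mild subtlety is handling the degenerate case $x = 0$: then $\xi(0) = 0$ forces the graph point to be the origin, which lies trivially in every cone centered at $0$, so it may be excluded from the ratio computation without loss. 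There is no real obstacle here; the statement is essentially a one-line trigonometric estimate, and the proof reduces to bookkeeping of these two inequalities.
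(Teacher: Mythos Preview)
Your proof is correct and follows essentially the same route as the paper's: both bound $|\xi(x)|\leq L|x|$ from $\xi(0)=0$ and combine this with $x\cdot e_1\geq M|x|$. The only cosmetic difference is that the paper bounds the denominator via the triangle inequality $\sqrt{|x|^2+\xi(x)^2}\leq |x|+|\xi(x)|\leq (1+L)|x|$, whereas you first obtain the sharper $\sqrt{1+L^2}\,|x|$ and then weaken using $\sqrt{1+L^2}\leq 1+L$.
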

\begin{proof}
We have to show that for $x\in A'$
$$ \frac{(x,\xi(x))\cdot e_1}{\sqrt{\sum x_i^2 + \xi(x)^2}} \geq M/(1+L)\ . $$
Since $\xi(0)=0$ we have 
$$|\xi(x)|=|\xi(x)-\xi(0)| \leq L|x|\ . $$ 
Therefore
$$ \sqrt{\sum x_i^2 + \xi(x)^2} \leq \sqrt{\sum x_i^2} + |\xi(x)| \leq (1+L)|x|\ .  $$
Since $x\in C_n(e_1,M)$ it follows that 
$$\frac{(x,\xi(x))\cdot e_1}{\sqrt{\sum x_i^2 + \xi(x)^2}} \geq \frac{x\cdot e_1}{(1+L)|x|} \geq  \frac{M}{1+L}\ .$$
\end{proof}

\begin{df}
Assume that $\lambda\in S^{n}$, $\lambda\cdot e_{n+1}>0$. Let $e_{j,\lambda}$ be the 
unique vector in $N_\lambda$ such that $\pi_{n+1}e_{j,\lambda} = e_j $.
\end{df}


\begin{lem} \label{cone_incl}
Let $v:=e_{1,\lambda}\in N_\lambda\subset\R^{n+1}$ and $\eps>0$. 
If  $\lambda\cdot e_{n+1}\geq 1 -\eps$ then 
$$C_{n+1}(e_1,M)\subset C_{n+1}(v/|v|,M')$$
with $M':=\frac{1}{|v|}\left( M - \frac{\sqrt{2\eps}}{1-\eps}\right).$
\end{lem}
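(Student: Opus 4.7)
The plan is to first obtain an explicit formula for $v = e_{1,\lambda}$, then bound its distance to $e_1$ using the hypothesis $\lambda\cdot e_{n+1}\geq 1-\eps$, and finally compare $q\cdot v/|v|$ with $q\cdot e_1$ via Cauchy--Schwarz.

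First I would write $\lambda = (\lambda_1,\dots,\lambda_n,\lambda_{n+1})$. The two conditions $v\in N_\lambda$ and $\pi_{n+1}v = e_1$ determine $v$ uniquely: $v = e_1 - (\lambda_1/\lambda_{n+1})e_{n+1}$. Hence $v - e_1$ lies along $e_{n+1}$ with
\[
|v - e_1| \;=\; \frac{|\lambda_1|}{\lambda_{n+1}}.
\]
Since $|\lambda|=1$ and $\lambda_{n+1}\ge 1-\eps$, we have $\lambda_1^2 \leq 1-\lambda_{n+1}^2 \leq 1-(1-\eps)^2 \leq 2\eps$, so $|\lambda_1|\leq\sqrt{2\eps}$ and consequently
\[
|v - e_1| \;\leq\; \frac{\sqrt{2\eps}}{1-\eps}.
\]

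Next, take any $q\in C_{n+1}(e_1,M)$, i.e. $q\cdot e_1 \geq M|q|$. By Cauchy--Schwarz applied to $v - e_1$,
\[
q\cdot v \;=\; q\cdot e_1 \;+\; q\cdot(v-e_1) \;\geq\; M|q| - |q|\,|v-e_1| \;\geq\; |q|\Bigl( M - \tfrac{\sqrt{2\eps}}{1-\eps}\Bigr).
\]
Dividing by $|v|$ yields
\[
\frac{q\cdot(v/|v|)}{|q|} \;\geq\; \frac{1}{|v|}\Bigl( M - \tfrac{\sqrt{2\eps}}{1-\eps}\Bigr) \;=\; M',
\]
which is exactly the defining inequality for membership in $C_{n+1}(v/|v|,M')$.

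There is no real obstacle here beyond the coordinate computation of $v$; the only thing worth paying attention to is that the bound $|v-e_1|\leq \sqrt{2\eps}/(1-\eps)$ uses both $\lambda_{n+1}\geq 1-\eps$ (for the denominator) and $|\lambda|=1$ (to force $\lambda_1^2\leq 2\eps-\eps^2$). The stated value of $M'$ absorbs precisely these two bounds, so no further estimate is needed.
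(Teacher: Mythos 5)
Your proof is correct and follows essentially the same route as the paper's: determine $v$ explicitly from the two defining conditions, bound $|v-e_1|$ by $\sqrt{2\eps}/(1-\eps)$ using $|\lambda|=1$ and $\lambda_{n+1}\geq 1-\eps$, then estimate $q\cdot v$ term by term. The paper writes $\lambda=\sqrt{1-\delta^2}\,e_{n+1}+\delta w$ rather than using the coordinate $\lambda_1$ directly, but this is cosmetic; the estimates and the structure of the argument are identical.
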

\begin{proof}
Since $v$ projects to $e_1$ there exists a constant $A\in\R$ such that
$v=e_1+Ae_{n+1}$.
Since $N_\lambda = \lambda^{\perp}$ it follows that
$0=\lambda\cdot v = e_1\cdot\lambda + Ae_{n+1}\cdot\lambda\  .$ 
Hence
$ A = \frac{-e_1\cdot\lambda}{e_{n+1}\cdot\lambda}\ .$
Of course
$\lambda=\sqrt{1-\delta^2}e_{n+1}+\delta w$ 
with $w\perp e_{n+1}$, $|w|=1$ and $\delta\in\R_+$. Consequently, $ 1-\eps\leq \lambda\cdot e_{n+1} = \sqrt{1-\delta^2},$
\begin{eqnarray*}
1+\eps^2-2\eps &\leq& 1-\delta^2\\
\delta^2 &\leq& 2\eps-\eps^2. 
\end{eqnarray*}
Therefore, $\delta\leq \sqrt{2\eps}$ and the estimate for $|A|$ follows:
$$|A| = \left|\frac{-e_1\cdot\lambda}{e_{n+1}\cdot\lambda}\right| 
\leq \left|\frac{|\delta w\cdot e_1|}{1-\eps} \right| 
\leq \left|\frac{\sqrt{2\eps}}{1-\eps} \right|\ .$$

Finally, let $q\in C_{n+1}(e_1,M)$. Then $q\in C_{n+1}(v,M')$ due to
\begin{eqnarray*}
\frac{q\cdot v}{|q||v|} &=& \frac{q\cdot e_1}{|q||v|} + A\frac{q\cdot e_{n+1}}{|q||v|}\\
&\geq&\frac{1}{|v|}\left( M - \frac{\sqrt{2\eps}}{1-\eps}\right)\ ,
\end{eqnarray*}
as required.
\end{proof}

\begin{lem}\label{proj_to_hyper}
Let $A\subset C_{n+1}(e_{1,\lambda},M)$, $\lambda\cdot e_{n+1}>1-\eps$ and let $H_1\subset \R^{n+1}$ be a graph of a Lipschitz 
semialgebraic function $\xi$, $\xi(0)=0$ for $\lambda_1$, $\lambda_1\cdot e_{n+1}>1-\eps$. 
Then, $\pi_{H_1}(A)\subset C_{n+1}(e_{1,\lambda_1},M')$.
\end{lem}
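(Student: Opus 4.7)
The plan is to compute the inner product $\pi_{H_1}(q)\cdot e_{1,\lambda_1}$ explicitly, observe that the ``twist'' by $\xi$ costs nothing because $e_{1,\lambda_1}\perp\lambda_1$, and then estimate the resulting expression using the geometry already built up in Lemma \ref{cone_incl}.

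First, for any $q\in\R^{n+1}$, I would note the identity
\[
\pi_{H_1}(q)\cdot e_{1,\lambda_1} \;=\; \bigl(\pi_{\lambda_1}(q)+\xi(\pi_{\lambda_1}(q))\lambda_1\bigr)\cdot e_{1,\lambda_1} \;=\; \pi_{\lambda_1}(q)\cdot e_{1,\lambda_1} \;=\; q\cdot e_{1,\lambda_1},
\]
where the first reduction uses $\lambda_1\cdot e_{1,\lambda_1}=0$ (since $e_{1,\lambda_1}\in N_{\lambda_1}$) and the second uses $\pi_{\lambda_1}(q)=q-(q\cdot\lambda_1)\lambda_1$ together with the same orthogonality. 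So the numerator in the cone condition for $\pi_{H_1}(q)$ is unchanged from that for $q$, only with the axis swapped from $e_{1,\lambda}$ to $e_{1,\lambda_1}$.

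Next I would compare the two axes. Writing $e_{1,\lambda}=e_1+a\,e_{n+1}$ and $e_{1,\lambda_1}=e_1+a_1\,e_{n+1}$ with $a=-(e_1\cdot\lambda)/(e_{n+1}\cdot\lambda)$ and $a_1=-(e_1\cdot\lambda_1)/(e_{n+1}\cdot\lambda_1)$, the calculation already performed in the proof of Lemma \ref{cone_incl} gives $|a|,|a_1|\leq \sqrt{2\eps}/(1-\eps)$, hence
\[
|e_{1,\lambda_1}-e_{1,\lambda}| \;=\; |a_1-a| \;\leq\; \tfrac{2\sqrt{2\eps}}{1-\eps}.
\]
Combined with the hypothesis $q\cdot e_{1,\lambda}\geq M|q|$ for $q\in A$, this yields
\[
q\cdot e_{1,\lambda_1} \;\geq\; q\cdot e_{1,\lambda} - |q|\cdot\tfrac{2\sqrt{2\eps}}{1-\eps} \;\geq\; \Bigl(M-\tfrac{2\sqrt{2\eps}}{1-\eps}\Bigr)|q|.
\]

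Finally I would bound $|\pi_{H_1}(q)|$ from above. Since $\xi$ is Lipschitz with constant $L$ and $\xi(0)=0$, the estimate $|\xi(\pi_{\lambda_1}(q))|\leq L|\pi_{\lambda_1}(q)|\leq L|q|$ together with $|\pi_{\lambda_1}(q)|\leq|q|$ gives $|\pi_{H_1}(q)|\leq (1+L)|q|$. Putting everything together,
\[
\frac{\pi_{H_1}(q)\cdot e_{1,\lambda_1}}{|\pi_{H_1}(q)|} \;\geq\; \frac{M-\tfrac{2\sqrt{2\eps}}{1-\eps}}{1+L} \;=:\; M',
\]
which is the desired cone membership. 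The only delicate point, exactly as in Lemma \ref{cone_incl}, is that the conclusion is nontrivial (i.e.\ $M'>0$) only when $\eps$ is small enough relative to $M$; this is the implicit smallness assumption already in force throughout the section, and no separate argument beyond the bookkeeping above is required.
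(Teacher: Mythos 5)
Your proof is correct and follows essentially the same route as the paper: the paper cites Lemma \ref{cone_incl} for the axis-rotation step, observes that $\pi_{\lambda_1}$ preserves the cone, and then applies the Lipschitz-graph estimate of Lemma \ref{lip_cone} (rotated to the $\lambda_1$-frame), while you unroll these three invocations into one explicit computation built around the clean identity $\pi_{H_1}(q)\cdot e_{1,\lambda_1}=q\cdot e_{1,\lambda_1}$. The content matches; your version is just more self-contained.
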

\begin{proof}
It follows from Lemma \ref{cone_incl} that 
$C_{n+1}(e_{1,\lambda},M)\subset C_{n+1}(e_{1,\lambda_1},M'')$ for some $M''$. Hence $\pi_{\lambda_1}(A)\subset C_{n+1}(e_{1,\lambda_1},M'')$ and since $\xi$ is Lipschitz with $\xi(0)=0$ it follows that $\pi_{H_1}(A)\subset C_{n+1}(e_{1,\lambda_1},M')$, where $M':=M''/(1+L))$ and $L$ is the Lipschitz constant of $\xi$.
\end{proof}

{\it Proof of Proposition \ref{cone_to_cone}}.
Assume first that $A\subset H_k$ for some $k$
and $H_k$ is a graph of $\xi_k$ relative to $\lambda_k$.
Observe that 
$$\pi_{n+1}\circ (h_H |_{H_k})(q) = \pi_{\lambda_1}\circ\pi_{H_1}\circ\dots\circ\pi_{H_{k-1}}(q).$$

It follows by iterating Lemma \ref{proj_to_hyper}, that $\pi_{n+1}\circ (h_H |_{H_k})(A)\subset C_{n+1}(e_1,M'')$ for some $M''$.
Since $h_H(H_K)=F_k$ is a graph of a Lipschitz semialgebraic function $\eta_k$ with $\eta_k(0)=0$ we conclude using Lemma \ref{lip_cone} that $h_H(A)\subset C_{n+1}(e_1,M''/(1+L_k))$, where
$L_k$ is the Lipschitz constant of $\eta_k$.
In the general case of $A\subset\cup H_k$ we apply the argument as above to $A\cap H_k$ and then conclude 
that $h_H(A)\subset \cup C_{n+1}(e_1,M''/(1+L_k))\subset C_{n+1}(e_1,M')$, as required.
$\hfill \square$\\

Nect we prove Theorem \ref{bi_Lip_cone_2}.\\

{\it Proof of Theorem \ref{bi_Lip_cone_2}.}
Let $H=\hk$ be a regular system of hyperplanes compatible
with the topological boundaries of $\Sigma$. 
Let $h_H:\R^{n+1}\to\R^{n+1}$ be the bi-Lipschitz map given by Proposition \ref{prop313}
and let $\eta_k:\R^n\to\R$ be Lipschitz semialgebraic functions such that $F_k:=h_H(H_k)$ is a graph of $\eta_k$.

By Proposition \ref{cone_to_cone} we have $h_H(C)\subset C_1:=C_{n+1}(e_1,M')$ and
hence conclusion (2) is proven.

Next, we construct a stratification $\A$ of $\R^{n+1}$ such that 
$h|_A$ is a diffeomorphism for every $A\in \A$ and moreover,
$h(A)$ is either included in a graph of one of the $\eta_i$'s or is a band
bounded by the graphs of two consecutive $\eta_i$'s.

By induction on $k$ we define a family of stratifications $\F_k:=\{\A_{1,k},\dots,\A_{k,k} \}$ 
such that for each $i$,  $\A_{i,k}$  is a stratification of $H_i$ that refines $\A_{i,k-1}$. 

For $k=1$ stratification $\A_{1,1}$ is a stratification of $H_1$.
Assuming that $\F_k$ is constructed we construct $\F_{k+1}$ as follows.

Define $\A_{k+1-j,k+1}$ by induction on $j$.
For $j=0$ set $\A_{k+1,k+1}$ to be a stratification of $H_{k+1}$.
Assuming that $\A_{k+1-j,k+1}$ is constructed we construct $\A_{k-j,k+1}$.
Since the hypersurface $H_{k+1-j}$ is a graph of a Lipschitz semialgebraic function relative to $\lambda_{k-j}$,
we set $\A_{k-j,k+1}$ to be a refinement of $\A_{k-j,k}$ compatible with all $\pi_{\lambda_{k-j}}^{-1}(A)\cap H_{k-j}$ for $A\in\A_{k-j+1,k+1}$.

Consequently, the final family $\F_b$ consists of a stratification of the hypersurfaces $\hk$
which induces a stratification $\A$ of $\R^{n+1}$ with the strata of $\A$ being:
\begin{itemize} 
\item The strata of each $\A_{j,b}$, $j=1,\dots,b$
\item The bands bounded by the graphs of $\zeta_k$ and $\zeta'_k$ relative to $\lambda_k$ intersected with 
$\pi_{\lambda_k}^{-1}$(A), where $A\in \A_{k,b}$ and $k=1,\dots,b$.
\item $\{q:q_{\lambda_{b}}>\zeta_b(q)\}$ and $\{q:q_{\lambda_{1}}<\zeta_1(q)\}$
\end{itemize}
Our construction of $h$ clearly guarantees that $h|_A$ is smooth for all $A\in\A$ and 
that $\Sigma_1:=\{h(A)\}_{A\in\A}$ forms a stratification of $\R^{n+1}$.
%
%
%
%
%
%
%
%
%
$\square$

%
%
%
%
%
%
%
%
%
%

\section{Proof of Lipschitz deformation retraction theorem}\label{sec_Lip_ret}
In this section we prove the main technical result of our work, Theorem \ref{lip_retraction_main}.
The proof follows the same structure as the sketch of the proof of Theorem \ref{lip_retraction_intro}.

Recall that inequality 
$\xi_j(r'_t(x))\lesssim \xi_j(x) $
with $\xi_j$ being a 
difference of two Lipschitz functions that bound a cell, is a criterion for the 
standard lift $r_t$ of $r'_t$ to be Lipschitz. In order to show that
$Dr_t$ admits estimates in terms of $t$, we will have to enlarge
inequality (\ref{xi_esti_intro}) from Theorem \ref{lip_retraction_intro} 
to a group of several inequalities:
\begin{enumerate}
  \item\begin{enumerate}[(a)]
          \item $\xi_j(r_t(q))\lesssim \xi_j(q)$
          \item if $\xi_j(0)=0$ then
                 $\xi_j(r_t(q))\lesssim t^{\lambda_j}\xi_j(q)$ for $q\in X$, $\lambda_j>0$.
      \end{enumerate}
\item $\xi_j(r_t(q)) \gtrsim t^{\mu_j}\xi_j(q) $, $\mu_j\geq 0$
\end{enumerate}
Then, we derive the estimates of $|\det Dr_t|$ and $\|Dr_t\|$ in terms of $t$ by making use of the
latter inequalities in a way similar to our proof of $r$ being Lipschitz. 


In the sketch of proof of Theorem \ref{lip_retraction_intro} we remarked that 
the functions $\xi_j$ of the theorem may be assumed (upon a bi-Lipschitz transformation) to be of the form $\xi_j(q)=|y-\eta_j(x)|^{w_j}a_j(x)$, where $\eta_j$ are Lipschitz and $a_j$ are continuous semialgebraic functions. We will make use of the following theorem and lemma (both from \cite{V1}) to justify this assumption.

\begin{thm}\label{lem function eq aux distances}{(Proposition 4.3 [V1])}
Given a non negative semialgebraic function $f$ on $\R^n$, there exist a finite number of semialgebraic
subsets $W_1, \dots,W_s$, and a partition of $\R^n$ such that $f$
is equivalent to a product of powers of distances to the $W_j$'s
on each element of the partition.
\end{thm}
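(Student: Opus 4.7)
The plan is to proceed by induction on $n$, combining a Lipschitz cell decomposition adapted to $f$ with a preparation-theorem style description of $f$ on each cell in terms of Lipschitz boundary functions, and then interpreting each boundary factor as equivalent to a distance to a semialgebraic set.

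The base case $n=1$ is handled directly: after subdividing $\R$ into finitely many points and open intervals compatible with $f$ (and with the zero and unbounded locus of $f$), on each such interval $f$ is semialgebraic and one-variable, so by standard Puiseux/Łojasiewicz arguments $f(x)\sim \prod_i |x-a_i|^{w_i}$ with $a_i$ among the finitely many breakpoints and $w_i\in\Q$. Each $W_i:=\{a_i\}$ is semialgebraic and $|x-a_i|=d(x,W_i)$, so the conclusion holds.

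For the induction step, one first produces a Lipschitz cell decomposition of $\R^n$ compatible with the relevant zero/pole/discontinuity loci of $f$, so that each top-dimensional cell $C$ is either a graph or a band, over a cell $C'\subset\R^{n-1}$, of globally defined Lipschitz semialgebraic functions. A Lipschitz semialgebraic preparation theorem (of the Lion--Rolin / Parusi\'nski--Valette type that underlies the machinery quoted earlier from \cite{V1}) then provides, after a further refinement of $\A$, a representation on each such cell of the form
\begin{equation*}
f(x,y)\ \sim\ \bigl|y-\xi(x)\bigr|^{w}\cdot a(x),
\end{equation*}
where $y$ is the last coordinate, $\xi:C'\to\R$ is one of the Lipschitz semialgebraic functions whose graph bounds $C$ (the ``closest'' one to $y$), $w\in\Q$, and $a$ is a nonnegative semialgebraic function on $C'$. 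The induction hypothesis applied to $a$ gives a semialgebraic partition of $\R^{n-1}$ and finitely many semialgebraic sets $W'_1,\dots,W'_r\subset\R^{n-1}$ with $a(x)\sim\prod_k d(x,W'_k)^{v_k}$ on each piece. Pulling this partition back through the projection $(x,y)\mapsto x$ refines the cell decomposition further, and $W_k:=W'_k\times\R$ is a semialgebraic set with $d((x,y),W_k)\sim d(x,W'_k)$ on the corresponding bands.

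It remains to replace the factor $|y-\xi(x)|$ by a genuine distance. Take $W_\xi$ to be the graph of $\xi$, a semialgebraic subset of $\R^n$. Because $\xi$ is Lipschitz with some constant $L$, for any $(x,y)$ in $C$ one has on the one hand the trivial bound $d((x,y),W_\xi)\leq |y-\xi(x)|$, and on the other hand, for any $(x',\xi(x'))\in W_\xi$,
\begin{equation*}
|y-\xi(x)|\ \leq\ |y-\xi(x')|+|\xi(x')-\xi(x)|\ \leq\ |y-\xi(x')|+L|x-x'|\ \lesssim\ \sqrt{|y-\xi(x')|^2+|x-x'|^2},
\end{equation*}
and taking the infimum gives $|y-\xi(x)|\lesssim d((x,y),W_\xi)$. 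Thus $|y-\xi(x)|\sim d((x,y),W_\xi)$ on $C$, and combining with the induction hypothesis expresses $f$ on each piece of the refined partition as a product of rational powers of distances to the semialgebraic sets $W_\xi$ and the pulled-back $W_k$'s. Collecting over the finitely many cells produces the finite list $W_1,\dots,W_s$ and the semialgebraic partition of $\R^n$.

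The main obstacle is the preparation step: obtaining the local representation $f\sim|y-\xi(x)|^w a(x)$ in a manner whose parameters and boundary functions $\xi$ are globally Lipschitz semialgebraic. This requires the full strength of the Lipschitz preparation machinery (together with Theorem~\ref{prop_3_10}--style regularization of the relevant hypersurfaces so that the defining graphs are in ``good position''). Once that local preparation is in hand, the reduction to distances is essentially the elementary estimate above, and the induction closes without further difficulty.
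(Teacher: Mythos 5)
The paper does not prove this statement at all; it quotes it verbatim as Proposition~4.3 of \cite{V1} and relies on it as an external black box. So there is no in-paper proof to compare against, and you are being asked to reconstitute a result from Valette's Lipschitz triangulation paper rather than a step of the present article.

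Your sketch follows what is essentially the route taken in \cite{V1}: build a Lipschitz cell decomposition of $\R^n$ adapted to $f$, use a Lipschitz-flavored preparation theorem on each top-dimensional cell to write $f(x,y)\sim|y-\xi(x)|^{w}a(x)$ with $\xi$ Lipschitz semialgebraic, push the residual factor $a$ down a dimension by induction, and finish by identifying each $|y-\xi(x)|$ with a distance to the graph $\Gamma_\xi$ and each $d(x,W'_k)$ with $d((x,y),W'_k\times\R)$. The elementary Lipschitz estimate you give for $|y-\xi(x)|\sim d((x,y),\Gamma_\xi)$ is correct (take $x'=x$ for one side, and the triangle inequality plus $a+Lb\lesssim\sqrt{a^2+b^2}$ for the other). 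However, your write-up is a proposal rather than a proof in precisely the place you acknowledge: the passage from ``semialgebraic preparation theorem'' (Lion--Rolin / Parusi\'nski form) to ``preparation with boundary data $\xi$ globally Lipschitz semialgebraic'' is nontrivial and is exactly the content that the regular-family/bi-Lipschitz-straightening machinery (Theorem~\ref{prop_3_10}, Proposition~\ref{prop313}) is designed to supply. Without carrying out that straightening --- and then verifying that the conclusion of the theorem is stable under composition with a semialgebraic bi-Lipschitz map, so that one may conjugate back --- the inductive step is not closed. A minor additional point: in the base case $n=1$ you should say explicitly that on an unbounded interval the asymptotic at infinity $f(x)\sim|x|^{w}$ is realized by $d(x,\{0\})^{w}$, and that on a bounded interval whose endpoints both carry nontrivial Puiseux orders you need a product of two distance factors; the single-factor phrasing ``$f(x)\sim\prod_i|x-a_i|^{w_i}$'' is correct only once stated with that care. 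With the Lipschitz preparation step filled in, the argument would be complete and would track \cite{V1} closely; as written it is a correct outline with one substantive cited gap.
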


\begin{lem}\label{prep_lemma}
Let $\eta_1,\dots,\eta_b:\R^n\to \R$ be Lipschitz semialgebraic functions
and $V\subset \R^{n+1}$ be a cell bounded by two consecutive $\eta_i$'s over $V':=\pi_{n+1}(V)$.
Suppose that  $W_1,\dots,W_m\subset \cup \Gamma_{\eta_j}$ and $\xi:V\to \R_+$ is a bounded semialgebraic function  equivalent to 
$$\prod{d(\cdot,W_j)^{\alpha_j}},\ \alpha_j\in \Q$$
Then, there exist Lipschitz semialgebraic functions $\theta_1 \leq \dots\leq \theta_{b'}:\R^n\to \R$, 
continuous semialgebraic functions $a_i:V\to \R$ 
and a subdivision $V=\cup V_i$ such that 
each $V_i$ is a cell bounded by two consecutive $\theta_i$'s over $V'$ and, moreover,
\begin{equation}\label{xi_m_prepared1}
\xi(q)|_{V_i}\sim |y-\eta_{\nu_i}(x)|^{w_i}a_i(x)\ .
\end{equation} 
\end{lem}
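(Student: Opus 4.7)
The plan is to prepare each factor $d(\cdot,W_j)^{\alpha_j}$ separately on a common subdivision of $V$ and then amalgamate the resulting expressions into a single term of the required form.

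First, I would treat one $W_j$ at a time. Since $W_j\subset\bigcup_i\Gamma_{\eta_i}$ with each $\eta_i$ Lipschitz semialgebraic, the function $d(\cdot,W_j):V\to\R_+$ is semialgebraic in $(x,y)$. Combining Theorem \ref{lem function eq aux distances} with a cylindrical cell decomposition of $V$ compatible with the $\eta_i$'s and adapted to the semialgebraic structure of $d(\cdot,W_j)$, I can refine $V$ into finitely many cells, bounded by graphs of Lipschitz semialgebraic functions, on each of which
\[
d(q,W_j)\sim |y-\eta_{\nu(j)}(x)|^{w_{j}}\,b_{j}(x)
\]
for some index $\nu(j)$ (one of the bounding functions of the refined cell) and some continuous semialgebraic $b_{j}$. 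Taking a common refinement over all $j$, I obtain Lipschitz semialgebraic functions $\theta_1\le\dots\le\theta_{b'}$ and a cell subdivision $V=\bigcup V_i$ (each $V_i$ bounded by two consecutive $\theta_k$'s over $V'$) on which every factor $d(q,W_j)^{\alpha_j}$ already takes a prepared form.

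The next step is to collapse the product $\prod_j |y-\eta_{\nu(j)}(x)|^{w_j\alpha_j}b_j(x)^{\alpha_j}$ on each $V_i$ into a single expression $|y-\eta_{\nu_i}(x)|^{w_i}a_i(x)$. The key observation is that, thanks to the refinement, each $\eta_{\nu(j)}$ is either identically equal on $\pi_{n+1}(V_i)$ to one of the bounding functions $\theta_k,\theta_{k+1}$ of $V_i$, or else it lies strictly above $\theta_{k+1}$ or strictly below $\theta_k$. In the latter case, for $q=(x,y)\in V_i$ we have $|y-\eta_{\nu(j)}(x)|\sim|\theta_k(x)-\eta_{\nu(j)}(x)|$ or $|\theta_{k+1}(x)-\eta_{\nu(j)}(x)|$, which depends on $x$ alone and can be absorbed into the $x$-coefficient. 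In the former case, a further subdivision along the locus $\{y=(\theta_k+\theta_{k+1})/2\}$ (if both $\theta_k$ and $\theta_{k+1}$ appear among the surviving $\eta_{\nu(j)}$'s) reduces to at most one genuinely $y$-dependent factor $|y-\eta_{\nu_i}(x)|^{w_i}$, with the other becoming $\sim|\theta_{k+1}(x)-\theta_k(x)|$, again a function of $x$ alone. Collecting all $x$-only factors into a single continuous semialgebraic $a_i(x)$ gives \eqref{xi_m_prepared1}.

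The main obstacle is the first step: ensuring that $d(\cdot,W_j)$ can be prepared cylindrically over $V$ with \emph{Lipschitz} semialgebraic bounding functions, rather than merely continuous ones. For this I would invoke the refined preparation and Lipschitz cell-decomposition machinery of \cite{V1}, which produces precisely such decompositions and guarantees that the exponents $w_j$ are rational. Once this ingredient is in place, the remaining combinatorial amalgamation described above is a straightforward bookkeeping argument.
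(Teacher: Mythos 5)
Your overall strategy (prepare each distance factor and then amalgamate) is the right shape, but there are two genuine gaps that the paper's argument fills with specific tools you don't invoke.

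First, the opening step as you've stated it is essentially circular: applying Theorem \ref{lem function eq aux distances} to the semialgebraic function $d(\cdot,W_j)$ only yields that it is equivalent to a product of powers of distances to some \emph{new} semialgebraic sets, with no control over their geometry, so you have not actually produced the asserted form $|y-\eta_{\nu(j)}(x)|^{w_j}b_j(x)$. The paper instead exploits the hypothesis $W_j\subset\bigcup_\nu\Gamma_{\eta_\nu}$ directly, via
$$ d(q,W_j)=\min_\nu d(q,W_j\cap\Gamma_{\eta_\nu}) \quad\text{and}\quad d(q,W_j\cap\Gamma_{\eta_\nu})\sim |y-\eta_\nu(x)|+d\bigl(x,\pi_n(W_j\cap\Gamma_{\eta_\nu})\bigr) ,$$
which separates the $y$-dependence structurally without re-invoking preparation.

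Second, your amalgamation step contains a false claim. When $\eta_{\nu(j)}$ lies strictly below $\theta_k$ it does \emph{not} follow that $|y-\eta_{\nu(j)}(x)|$ is equivalent, on the band $\theta_k<y<\theta_{k+1}$, to a function of $x$ alone: if $\theta_k-\eta_{\nu(j)}$ is tiny compared to $\theta_{k+1}-\theta_k$, then $|y-\eta_{\nu(j)}|$ sweeps over a range incommensurate with $|\theta_k-\eta_{\nu(j)}|$. The paper prevents exactly this by first refining (Remark \ref{r11}: include graphs of $f_i$, $f_i+f_j$, $\tfrac{f_i+f_j}{2}$) so that, over each resulting cell, the functions $|y-\eta_\nu|$, $|\eta_\nu-\eta_{\nu'}|$, and $d\bigl(x,\pi_n(W_j\cap\Gamma_{\eta_\nu})\bigr)$ are pairwise \emph{comparable}; only then do the sums collapse to a single max or min depending on the sign of the exponent, and the lingering $y$-dependent factors can be merged into one. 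Without setting up that comparability, the collapse you perform is not justified.
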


We prove this lemma following
\begin{rk}\label{r11}
${\ }$
\begin{enumerate}
 \item
 If $A$ is a union of graphs of  semialgebraic functions $\theta_1 , \dots , \theta_k$ over $\R^n$  
then there is an ordered family of semialgebraic functions  $\xi_1 \leq \dots \leq\xi_k$ such that $A$ is a union of graphs of these functions.
\item
Given a family of  Lipschitz semialgebraic functions $f_1, \dots, f_k$
defined over $\R^n$ there is a cell decomposition $\C'$
of $\R^n$ and some Lipschitz semialgebraic functions $\xi_1\leq \dots \leq  \xi_m$
on $\R^n$ such that over each cell
$C=\{ q=(x\ ;\ q_{n+1})\in\R^{n+1} :\ x\in C' ,\ \xi_i(x)\leq q_{n+1} \leq \xi_{i+1}(x)\}$, 
where $C'\in\C'$, the semialgebraic functions $|q_{n+1}-f_i(x)|$  are comparable with each other
and are comparable with the functions $f_i \circ \pi_n$.
Indeed, it suffices to consider the  graphs of
functions  $f_i$, $f_i+f_j$  and  $\frac{f_i+f_j}{2}$
and then the family $\xi_1, \dots, \xi_m$  is provided by the first remark.
\end{enumerate}
\end{rk}

\begin{proof}[Proof of Lemma \ref{prep_lemma}]
Note that 
\begin{equation}\label{split_sum}
d(q, W_{j}\cap\Gamma_{\eta_\nu})\sim |y-\eta_\nu(x)|+
 d(x, \pi_n(W_{j}\cap\Gamma_{\eta_\nu}))\ ,\ \  q\in\R^{n+1}\ , 
\end{equation}
where $j\in J$ and $1\leq\nu\leq b$. 
%
%
According to Remark \ref{r11}, there is a collection of Lipschitz semialgebraic functions  
$\{\theta_1,\dots,\theta_{b'}\} \supset \{\eta_1,\dots,\eta_b\}$ on $\R^n$ such that 
there exists a cell decomposition $\C_0$ of $\R^{n+1}$ with the following properties:
\begin{enumerate}
\item The cells of $\C_0$ consist of graphs and bands of $\theta_i$'s over the cells in $\R^n$.
\item The semialgebraic functions $d(x, \pi_n(W_j\cap\Gamma_{\eta_\nu}))$, 
$\eta_\nu$, $|\eta_\nu-\eta_{\nu'}|$ , and $|y-\eta_\nu|$, where $1\leq\nu,\nu'\leq b$ and $j\in J$, are pairwise comparable over each cell in $\C_0$.
\end{enumerate}
Let $\C$ be a stratification of $\R^{n+1}$ obtained from $\C_0$ by refining the cells in $\R^n$ in such a way that taking graphs and bands of the restrictions of $\theta_j$'s to those cells
results in a Whitney A stratification of $\R^{n+1}$. 



Let $C$ be an open cell in $\R^{n+1}$ bounded by the graphs of $\theta_{j_0}$ and
$\theta_{j_0+1}$ over a cell $C'$ in $\R^n$. Due to the fact that the cell decomposition $\C$ is compatible with the graphs of the $\eta_i$'s, 
we have either $\eta_i|_{C'}\geq\theta_{j_0+1}$ or $\eta_i|_{C'}\leq\theta_{j_0}$ for any $i\in\{1,\dots,b\}$.
Note that for any $j\in J$ and $q\in C$  we have
$$ d(q,W_j)=\min_\nu d(q,W_j\cap\Gamma_{\eta_\nu})\ . $$
Therefore,
\begin{eqnarray*}
\xi(q) &\sim& \prod_{j\in J} (\min_\nu d(q,W_j\cap\Gamma_{\eta_\nu}))^{w_{j}}\\ 
&\sim&
\prod_{j\in J} \left(|y-\eta_\nu(x)|+ d(x, \pi_n(W_j\cap\Gamma_{\eta_\nu}))\right)^{w_{j}}\ .  
\end{eqnarray*}
Each expression of the form
$$ \left(|y-\eta_\nu(x)|+
d(x, \pi_n(W_{j}\cap\Gamma_{\eta_\nu}))\right)^{w_{j}} $$
is equivalent to
\begin{equation}\label{a_le_0}
 \min\left( |y-\eta_\nu(x)|^{w_{j}},
d(x, \pi_n(W_{j}\cap\Gamma_{\eta_\nu}))^{w_{j}} \right)
\quad\text{   if ${w_{j}}<0$ } 
\end{equation}
and is equivalent to  
\begin{equation}\label{a_ge_0}
 \max\left( |y-\eta_\nu(x)|^{w_{j}},
d(x, \pi_n(W_{j}\cap\Gamma_{\eta_\nu}))^{w_{j}} \right)
\quad\text{   if ${w_{j}}>0$\ . } 
\end{equation}

Since over the cell $C$, functions from collection 
$\{|y-\eta_\nu(x)|$, $|\eta_\nu-\eta_{\nu'}|, d(x, \pi_n(W_{j}\cap\Gamma_{\eta_\nu}))\}$
with $1\leq\nu,\nu'\leq b$, $i\in I$ and $j\in J$, are pairwise comparable it follows that the expressions in (\ref{a_le_0}) and (\ref{a_ge_0}) 
are equal to either $|y-\eta_\nu(x)|^{w_{j}}$ or 
$d(x, \pi_n(W_{j}\cap\Gamma_{\eta_\nu}))^{w_{j}}$.
Also, one of the following 3 properties holds
\begin{itemize}
\item $|y-\eta_\nu(x)|\sim |y-\eta_{\nu'}(x)|$ 
\item $|y-\eta_\nu(x)|\sim |\eta_\nu(x)-\eta_{\nu'}(x)|$
\item $|y-\eta_{\nu'}(x)|\sim |\eta_\nu(x)-\eta_{\nu'}(x)|$ . 
\end{itemize}
Consequently, there are constants $\nu_k,{w}$, ${w_{\nu,\nu'}}$, and ${w'_{j}}$
such that over the cell $C$ (\ref{xi_m_prepared1}) holds with
$$a(x)=\prod_{\nu,{\nu'}}|\eta_\nu-\eta_{\nu'}|^{w_{\nu,\nu'}}
\prod_{j\in J} d(x, \pi_n(W_{j}\cap\Gamma_{\eta_\nu}))^{w'_{j}}\ .$$

\end{proof}

Next we prove Theorem \ref{lip_retraction_main} in a formulation convenient for a proof by induction on the dimension of the ambient $\R^n$.
\begin{thm}\label{retract}
Let $\Sigma_0$ be a stratification of $\R^n$ compatible with $X:=\cup_{j\in J} X_j$, $X_j\in\Sigma_0$ (with $J$ a finite index set) such that $0\in \overline {X_j}\cap X$ for all $j\in J$ and $X\subset C_n (e_1,M)$.
Let $\xi_1,\dots,\xi_l:\R^n\to \R_+$ be bounded semialgebraic functions.
Then, 
there are an open stratified neighborhood $(U,\Sigma_U)$ of $0$ in $\R^n$, $\Sigma_U\prec \Sigma_0$ and a Lipschitz semialgebraic deformation 
retraction $r:U\times [0,1]\to U$ such that 
\begin{enumerate}
\item $r|_{S\times (0,1]}$ is smooth for all $S\in \Sigma_U$ ;
\item $r:S\times (0,1] \to S$, $S\in \Sigma_U$, $r_0(q)=0$,  $r_1(q)=q$ ;
\item\label{xi_ineq} \begin{enumerate}[(a)] 
          \item\label{xi_ineq_1} $\xi_j(r_t(q))\lesssim \xi_j(q)$ ;
          \item\label{xi_ineq_2} if $\xi_j$ is continuous near $0$ and $\xi_j(0)=0$ then
                 for some $\lambda_j>0$ and all $q\in X$ inequality $\xi_j(r_t(q))\lesssim t^{\lambda_j}\xi_j(q)$ holds;
      \end{enumerate}
\item\label{xi_ineq_3} $\xi_j(r_t(q)) \gtrsim t^{\mu_j}\xi_j(q) $ for some $\mu_j\geq 0$ ;
\item\label{r_ineq_1} $|\det D r_t|\gtrsim t^\mu$ for some $\mu\geq 0$ ;
\item\label{r_ineq_2} $\| Dr_t|_{X_{reg}} \| \lesssim t^{\lambda}$ for some $\lambda>0$ .
\end{enumerate}

\end{thm}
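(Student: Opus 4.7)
The plan is to prove Theorem \ref{retract} by induction on the ambient dimension $n$, following the structure sketched for Theorem \ref{lip_retraction_intro} but now tracking the derivative estimates $(\ref{r_ineq_1})$ and $(\ref{r_ineq_2})$ throughout. The base case $n=1$ reduces to the linear retraction $r(x,t):=tx$ on each one-sided interval: after Theorem \ref{lem function eq aux distances} all $\xi_j$ become monomials $|x|^{w_j}a_j(x)$ with $a_j$ bounded above and below, so $(\ref{xi_ineq})$--$(\ref{r_ineq_2})$ reduce to elementary monomial inequalities in $t$. For the inductive step, I would first bring the data into standard position by applying Theorem \ref{bi_Lip_cone_2} to $\Sigma_0$ together with the cone $C_n(e_1,M)$. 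This yields a bi-Lipschitz change of coordinates of $\R^n$ after which each stratum lies in a union of graphs and bands of a finite family of Lipschitz semialgebraic functions $\eta_1<\cdots<\eta_b:\R^{n-1}\to\R$ over a cell subdivision of $\R^{n-1}$. Such a change of coordinates preserves every conclusion up to constants, so I may assume $X$ is already in this form.

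Next, I would apply Theorem \ref{lem function eq aux distances} and Lemma \ref{prep_lemma} to prepare each $\xi_j$ so that, on each cell of the subdivision,
\[
\xi_j(q)\sim |y-\eta_{\nu(i,j)}(x)|^{w_{i,j}}a_{i,j}(x),
\]
with $a_{i,j}$ built from powers of differences $|\eta_\nu-\eta_{\nu'}|$ and distances in $\R^{n-1}$. I would then invoke the inductive hypothesis on $\R^{n-1}$, fed with the cell subdivision of $\R^{n-1}$ and the augmented collection of auxiliary functions
\[
\eta_\nu,\quad |\eta_\nu-\eta_{\nu'}|,\quad a_{i,j},\quad\min\{|\eta_\nu-\eta_{\nu'}|^{w_{i,j}}a_{i,j},1\},
\]
together with the distance-type factors appearing inside the $a_{i,j}$. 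This produces a Lipschitz semialgebraic deformation retraction $r':V\times I\to V$ satisfying conditions $(1)$--$(6)$ relative to this augmented family. I then define $r:U\times I\to U$ as the standard lift of $r'$ in the sense of Definition \ref{std_lift}, cell by cell.

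The verification of the conclusions for $r$ then splits. Property $(2)$ is built into Definition \ref{std_lift} and $(1)$ follows from smoothness of $r'$ and of the $\eta_i$ off a thin stratified set. The Lipschitz property and the inequalities $(\ref{xi_ineq_1})$--$(\ref{xi_ineq_2})$ follow from Proposition \ref{lip_lift} and a direct extension of Lemma \ref{check_xi_ineq} applied to $\xi_j$ in its prepared form, using the inductive control on $a_{i,j}$, $|\eta_\nu-\eta_{\nu'}|$ and the truncations by $1$ to unify the cases $w_{i,j}>0$ and $w_{i,j}<0$. The new content is $(\ref{r_ineq_1})$ and $(\ref{r_ineq_2})$: writing $r_t(x,y)=(r'_t(x),r_{n,t}(x,y))$ on a band bounded by $\theta_1<\theta_2$, the Jacobian factors as
\[
|\det Dr_t|=|\det Dr'_t|\cdot\Bigl|\tfrac{\partial r_{n,t}}{\partial y}\Bigr|,\qquad \tfrac{\partial r_{n,t}}{\partial y}=\frac{\theta(r'_t(x))}{\theta(x)},\quad \theta:=\theta_2-\theta_1.
\]
Both factors come under inductive control: the inductive $(\ref{r_ineq_1})$ gives $|\det Dr'_t|\gtrsim t^{\mu'}$ and the inductive $(\ref{xi_ineq_3})$ applied to the auxiliary function $\theta$ gives $\theta(r'_t(x))/\theta(x)\gtrsim t^{\mu''}$, yielding $(\ref{r_ineq_1})$. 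For $(\ref{r_ineq_2})$ one bounds every partial of $r_{n,t}$ by a product involving $\|Dr'_t\|$, Lipschitz constants of the $\eta_i$, the ratio $\theta(r'_t(x))/\theta(x)$, and the $\tau$-derivatives computed in the proof of Proposition \ref{lip_lift}; applying the inductive $(\ref{xi_ineq_2})$ to $\theta$, which satisfies $\theta(0)=0$, supplies the needed factor $t^\lambda$.

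The main obstacle is the bookkeeping for $(\ref{xi_ineq_2})$: a standard lift does not automatically promote a plain upper bound $\xi_j(r_t(q))\lesssim \xi_j(q)$ to the sharper $\xi_j(r_t(q))\lesssim t^{\lambda_j}\xi_j(q)$, so one must ensure that every auxiliary function fed to the inductive hypothesis which is required to satisfy the $t^{\lambda_j}$ bound actually vanishes at $0$, and that the prepared multiplicative pieces $|y-\eta_\nu|^{w_{i,j}}$ and $a_{i,j}$ recombine compatibly with the $\min$-with-$1$ truncations used in Lemma \ref{check_xi_ineq}. Once this accounting is carried out uniformly, inequalities $(\ref{xi_ineq})$ and $(\ref{xi_ineq_3})$ propagate through the standard lift with the correct powers of $t$, and $(\ref{r_ineq_1})$--$(\ref{r_ineq_2})$ follow from the Jacobian computation above.
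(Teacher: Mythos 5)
Your overall architecture matches the paper's: induction on the ambient dimension, straightening the boundaries of the strata with Theorem \ref{bi_Lip_cone_2}, preparing the $\xi_j$ via Theorem \ref{lem function eq aux distances} and Lemma \ref{prep_lemma} into the form $|y-\eta_\nu(x)|^{w}a(x)$, applying the inductive hypothesis on $\R^n$ to an augmented family of auxiliary functions, taking the standard lift, and factoring the Jacobian as $|\det Dr_t| = |\det Dr'_t|\,|\partial r_{t,n+1}/\partial y|$ to propagate (\ref{r_ineq_1}) and (\ref{r_ineq_2}). That part is fine.

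The gap is exactly where you flag uncertainty: conclusion (\ref{xi_ineq_2}). You observe that the standard lift does not automatically upgrade $\xi_j(r_t(q))\lesssim \xi_j(q)$ to $\xi_j(r_t(q))\lesssim t^{\lambda_j}\xi_j(q)$ and say ``one must ensure that every auxiliary function \ldots actually vanishes at $0$'' and that ``once this accounting is carried out uniformly'' the estimate follows --- but you never supply the mechanism. Naively trying to propagate a $t^\lambda$-gain through the prepared form $|y-\eta_\nu|^{w}a(x)$ fails: if $w<0$ then shrinking $|y-\eta_\nu|$ makes that factor \emph{grow}, and $a(x)$ itself need not vanish at $0$, so there is no way to extract a $t^{\lambda_j}$ from the min/max manipulations of Lemma \ref{check_xi_ineq} alone. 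The paper bypasses this cleanly with a Łojasiewicz reduction that your proposal does not contain: since $\xi_j$ is continuous with $\xi_j(0)=0$, the Łojasiewicz inequality produces a continuous semialgebraic $\tilde\xi_j$ and $L>0$ with $\xi_j(q)=\tilde\xi_j(q)\,d(q,0)^L$; adjoining $\tilde\xi_j$ to the original collection gives $\tilde\xi_j(r_t(q))\lesssim\tilde\xi_j(q)$ by (\ref{xi_ineq_1}), reducing (\ref{xi_ineq_2}) to showing $d(r_t(q),0)\lesssim t^{\lambda_d}d(q,0)$. Since $X\subset C_{n+1}(e_1,M)$, one has $d(q,0)\sim d(x,0)$ on each cell, and $d(x,0)$ is placed (deliberately) at the head of the auxiliary list $\mathcal{G}$ fed to the inductive hypothesis, so the inductive (\ref{xi_ineq_2}) applied to $d(\cdot,0)$ on $\R^n$ finishes it. Your list of auxiliary functions omits $d(x,0)$ and omits the cross terms $\min\bigl(a_k|\theta_j-\eta_{\nu_k}|^{w_k},1\bigr)$ needed for the $\eta(z_1)^{w_k}$ piece of the min/max analysis, and no Łojasiewicz step appears; as written, (\ref{xi_ineq_2}) is not established.

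Your verification of (\ref{r_ineq_2}), by contrast, \emph{does} work once the auxiliary list includes $|\theta_j-\theta_{j+1}|$, because on the cone all the $\theta_j$ vanish at $0$, so (\ref{xi_ineq_2}) applies to $\theta:=\theta_{j+1}-\theta_j$ and yields $|\theta(r'_t(x))/\theta(x)|\lesssim t^{\lambda_\theta}$ --- this is the same computation as the paper's. The deficiency is isolated to (\ref{xi_ineq_2}) for a general $\xi_j$.
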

%
%
%
%
\begin{proof}
Assume $n=1$. Let $r(x,t):=tx$. Each function 
$\xi_j$ is a bounded semialgebraic function near $0$ and therefore is continuous at $0$. 
If $\xi_j(0)=a_j > 0$ then for $x$ small enough we have $\xi_j\sim a_j$ and therefore
estimates (\ref{xi_ineq_1}), (\ref{xi_ineq_3}), (\ref{r_ineq_1}) and (\ref{r_ineq_2})  hold.
If $\xi(0)=0$ then 
expanding each  $\xi_j$ into a Puiseux series 
$$ \xi_j(x) =  b_j x^{w_j} + R(x),\ \   w_j\in\Q_{+},\ \  R(x)\in o(x^{w_j}), $$ 
it follows
$$ \xi_j(x) \sim  b_j x^{w_j}. $$ 
Consequently all estimates from (\ref{xi_ineq}) to (\ref{r_ineq_2}) follow.
Next we prove
 $\mathbf{(H_{n+1})}$ assuming $\mathbf{(H_{n})}$. 
Throughout the proof, we represent points of $\R^{n+1}$ by $q=(x,y)\in\R^{n}\times\R$.
According to Theorem \ref{lem function eq aux distances} there exists a finite partition $\{V_i\}_{i\in I}$
of $\R^{n+1}$ and a finite family of closed semialgebraic subsets $\{W_{j}\}_{j\in J}$ with empty interiors (otherwise, we would replace such $W_j$'s by their topological boundaries),  
such that for every $V_i$ with $0$ in its closure holds:
\begin{equation}\label{prod_dist}
\xi_k(q)\sim \prod_{j\in J}{d(q,W_j)^{w_{ijk}}}\ ,\ \ q\in V_i\ ,  
\end{equation}
where $1\leq k\leq l$ and $w_{ijk}\in\Q$.
We may assume that $0\in W_{j}$ for all $j\in J$ since the $W_{j}$ that do not contain $0$ are superfluous for the validity of (\ref{prod_dist}).
Consider a stratification $\tilde\Sigma$ that simultaneously refines stratification $\Sigma_0$
and both collections $\{V_i\}_{i\in I}$ and $\{W_j\}_{j\in J}$.
Applying Theorem \ref{bi_Lip_cone_2} with input $\tilde{\Sigma}$ results in a refining stratification $\A$ and a bi-Lipschitz map $h:\R^{n+1}\to \R^{n+1}$ 
such that all of the images of the topological boundaries of semialgebraic sets in $\Sigma_0\cup\{V_i\}_{i\in I}\cup \{W_j\}_{j\in J}$ are included in 
the graphs of Lipschitz semialgebraic functions $\eta_1,\dots,\eta_b:\R^n\to \R$ and 
$h(X)\subset C_{n+1}(e_1,M')$ for some $M'\in[0,1)$.
Moreover, $h|_A$ is a diffeomorphism for all $A\in\A$ and each stratum from $\A$ is mapped
to a stratum of the form of a graph over lower dimensional stratum of one of the $\eta_j$'s or
a band bounded by two consecutive $\eta_j$'s.

Since the final conclusion of Theorem \ref{retract} is stable upon application of a semialgebraic bi-Lipschitz homeomorphism we may identity  map $h$ with the
identify the map and rename $M'$ by $M$.


Lemma \ref{prep_lemma} applied to the sets $W_j$ and functions $\xi_k$ 
results in a collection of Lipschitz semialgebraic functions $\theta_1\leq\dots\leq \theta_{b'}$
(including functions $\eta_j$) defined over $\R^n$  and a cell decomposition $\Sigma\prec \A$ of $\R^{n+1}$ 
such that the cells in $\R^{n+1}$ are given by graphs and bands of $\theta_i$'s over the cells
in $\R^n$ and over each cell $C\in \Sigma$
\begin{equation}\label{xi_m_prepared}
\xi_k|^{}_{C}(q)\sim |y-\eta_{k,C}(x)|^{w_{k,C}}a_{k,C}(x)\ .
\end{equation}

We assume without loss of generality that $X_j$'s are cells of $\Sigma$ since otherwise,
we may achieve this by refining the stratification $\Sigma$.
Note that $X'\subset C_n(e_1,M')$.
Apply the inductive hypothesis to the cells $\Sigma'$ (cells of $\Sigma$ in $\R^n$) with $X':=\cup \pi_n(X_j)$ and to a collection of semialgebraic functions $\mathcal{G}$ which we list following. The functions in $\mathcal{G}$ are the following:
\begin{itemize}
\item $d(x,0)$
\item $|\theta_j(x)|$ for $1\leq j\leq b'$,
\item $|\eta_j(x)-\eta_{j+1}(x)|$ for ${1\leq j\leq b-1}$,
\item $|\theta_j(x)-\theta_{j+1}(x)|$ for ${1\leq j\leq b'-1}$,  
\item $\min\left(a_k(x)|\theta_j(x)-\theta_{j+1}(x)|^{w^{}_{k}},1\right)$ 
for ${1\leq j\leq b'-1}$, ${1\leq k\leq l}$ , 
\item $\min\left(a_k(x)|\theta_j(x)-\eta^{}_{\nu^{}_k}(x)|^{w^{}_{k}},1\right)$ 
for ${1\leq j\leq b'}$,  ${1\leq k\leq l}$,
\item $\min\left(a_k(x)|\theta_{j+1}(x)-\eta^{}_{\nu^{}_k}(x)|^{w^{}_{k}},1\right)$ 
for ${1\leq j\leq b'-1}$, ${1\leq k\leq l}$. 
\end{itemize}
As a consequence we obtain a neighborhood $U'$ of $0\in\R^n$ and Lipschitz semialgebraic deformation retraction 
$r':U'\times I\to U'$ with all the properties listed in the theorem.
Let $r$ be the standard lift of $r'$ as defined in Definition \ref{std_lift}.
Note that $r$ is continuous and smooth on $C\times(0,1]$ for every cell $C\in\Sigma$.
It is clear from the definition of $r$ that conclusions $(1)$ and $(2)$ hold for $r$.
We have to prove that $r$ is Lipschitz and that the estimates from (\ref{xi_ineq}) to (\ref{r_ineq_2}) hold.
Note that it is enough to prove all these estimates on a cell $C\in\Sigma$ of $X$,
so let $C$ be a cell in $X$.

{\it Proof that $r$ is Lipschitz. } 
If $C$ is a graph of a Lipschitz semialgebraic function then by Proposition \ref{lip_lift} 
the standard lift $r$ of $r'$ is Lipschitz.
If $C$ is a band bounded by $\theta_j\leq\theta_{j+1}$ then
let $\theta:=|\theta_j-\theta_{j+1}|$ and note that $\theta\in\mathcal{G}$. Therefore
we have $\theta(r'(x,t))\lesssim \theta(x)$. 
According to the criterion of \ref{lip_lift} the standard lift $r$ is also Lipschitz.

\textit{Proof of the estimates (\ref{xi_ineq_1}) and (\ref{xi_ineq_3}).}
%
%
When $C$ is a graph over a cell of $\Sigma'$ condition (\ref{xi_ineq}a) is straight forward consequence of the induction hypothesis. Otherwise, the cell $C$ is bounded by $\theta_j$  and $\theta_{j+1}$ and each $\xi_k$, $1\leq k\leq s$, is of the form (\ref{xi_m_prepared}). Moreover, due to the construction preceding \ref{xi_m_prepared} also either $\eta_{\nu_k}\leq\theta_j$ or $\eta_{\nu_k}\geq\theta_{j+1}$. We may assume,
without loss of generality that the former case holds. 
Consequently  
$$|y-\eta_{\nu_k}(x)|=|y-\theta_j(x)|+|\eta_{\nu_k}(x)-\theta_j(x)|\ .$$
Denote 
$$ \theta(x):=|\theta_{j+1}(x)-\theta_j(x)| \text{ and } \eta(x):=|\eta_{\nu_k}(x)-\theta_j(x)|\ .$$
Then 
\begin{equation}\label{xi_mq}
\xi_k(q)\sim a_k(x)\left\{ \begin{array}{ll}
                \min(|y-\theta_j(x)|^{w^{}_{k}},
                \eta(x)^{w_{k}} )  &  \text{ if \ } w^{}_{k} <0\\
                \max (|y-\theta_j(x)|^{w^{}_{k}},
                \eta(x)^{w_{k}} )  &  \text{ if \ } w^{}_{k} >0\ .
  
               \end{array} \right.
\end{equation}
Denote by $z:=z(t)=(z_1(t),z_2(t))$ the components of the deformation retraction $r=r(q,t)$ with $(z_1(t),z_2(t))\in \R^n\times\R$ and $q=(x,y)$. In abuse of notation we will write $(z_1,z_2)$ instead of $(z_1(t),z_2(t))$.
In terms of Definition \ref{std_lift} and due to (\ref{xi_mq}) it follows that: 
\begin{equation}\label{xi_m1}
\xi_k(z)= a_k(z_1)\left\{ \begin{array}{ll}
                 \min\{|\tau(z)\theta(z_1)|^{w_{k}},
                 \eta(z_1)^{w_{k}}\}  & \text{ if \ } w_{k} <0\\                
                 \max\{|\tau(z)\theta(z_1)|^{w_k},
                 \eta(z_1)^{w_{k}} \} & \text{ if \ } w_{k} >0\ .\\
           \end{array} \right.
\end{equation}

Recall (Def.\ref{std_lift}) that $\tau(r(q,t))=\tau(q)$ for all $t\in[0,1]$.
\newline
If ${w_k}<0$ then boundedness of $\xi_k$ implies 
\begin{equation}\label{xi_m_le0}
\xi_k(z) \sim \min\{ \min(a_k(z_1)|\tau(z)\theta(z_1))|^{w_k},1),
\min(a_k(z_1)\eta(z_1)^{w_k},1) \}\ .
\end{equation}

Note that if conditions (\ref{xi_ineq_1}) and (\ref{xi_ineq_3}) of the inductive hypothesis hold for $f_1$ and $f_2$ then they also
hold for $\min\{f_1,f_2\}$ and that if $f$ is a non-negative and bounded function then $f\sim \min(f,1)$.
Therefore, it suffices to prove that 
$$\min(a_k(z_1)|\tau(z)\theta(z_1))|^{w_k},1)\lesssim 
\min(a_k(x)|\tau(q)\theta(x)|^{w_k},1),$$
$$ \min(a_k(z_1)|\tau(z)\theta(z_1))|^{w_k},1)\gtrsim 
t^{\mu_k}\min(a_k(x)|\tau(q)\theta(x)|^{w_k},1)
$$
and that
$$\min(a_k(z_1)\eta(z_1)^{w_k},1) \lesssim
\min(a_k(x)\eta(x)^{w_k},1)\ , $$
$$\min(a_k(z_1)\eta(z_1)^{w_k},1) \gtrsim
t^{\mu_k}\min(a_k(x)\eta(x)^{w_k},1)\ , $$

The latter two are immediate consequences of the inductive hypothesis. 
For the proof of the former two inequalities we note that the inductive hypothesis implies
$$
\min(a_k(z_1)\theta(z_1)^{w_k},1)\lesssim
\min(a_k(x)\theta(x)^{w_k},1)\ ,           
$$
and also that 
$$
\min(a_k(z_1)\theta(z_1)^{w_k},1)\gtrsim
t^{\mu_k}\min(a_k(x)\theta(x)^{w_k},1)\ .           
$$
Therefore,
\begin{eqnarray}\label{xi_m2}
\min(a_k(z_1)|\tau(z)\theta(z_1))|^{w_k},1) &=&
\min\{\tau(z)^{w_k} a_k(z_1)\theta(z_1)^{w_k},\tau(z)^{w_k},1\}\nonumber\\&=&
\min\{\tau(z)^{w_k}\min( a_k(z_1)\theta(z_1)^{w_k},1),1\}\nonumber\\&\lesssim&    
\min\{\tau(q)^{w_k} a_k(x)\theta(x)^{w_k},1\}\nonumber\ .         
\end{eqnarray}
And similarly,
\begin{eqnarray}\label{xi_m2}
\min(a_k(z_1)|\tau(z)\theta(z_1))|^{w_k},1) &=&
\min\{\tau(z)^{w_k} a_k(z_1)\theta(z_1)^{w_k},\tau(z)^{w_k},1\}\nonumber\\&=&
\min\{\tau(z)^{w_k}\min( a_k(z_1)\theta(z_1)^{w_k},1),1\}\nonumber\\&\gtrsim&    
\min\{\tau(q)^{w_k}t^{\mu_k}\min( a_k(x)\theta(x)^{w_k},1),1\}\nonumber\\&\gtrsim&    
t^{\mu_k}\min\{\tau(q)^{w_k}\min( a_k(x)\theta(x)^{w_k},1),1\}\nonumber\\&=&    
t^{\mu_k}\min\{\tau(q)^{w_k} a_k(x)\theta(x)^{w_k},1\}\nonumber\ .         
\end{eqnarray}

Assume now that ${w_k}>0$. Boundedness of $\xi_k$, formula (\ref{xi_m1}) and the 
induction hypothesis imply that
\begin{equation}\label{xi_mge0}
a_k(z_1)|\tau(z)\theta(z_1)|^{w_k} \lesssim
a_k(x)|\tau(q)\theta(x)|^{w_k}\ .
\end{equation}
$$
a_k(z_1)|\tau(z)\theta(z_1)|^{w_k} \gtrsim
t^{\mu_k}a_k(x)|\tau(q)\theta(x)|^{w_k}\ .
$$
Therefore,
\begin{eqnarray*}
\xi_k(z) &\sim&
\max\{ (a_k(z_1)|\tau(z)\theta(z_1)|^{w_k},
                 a_k(z_1)\eta(z_1)^{w_k} \} \\
                 &\lesssim&
\max\{ a_k(x)|\tau\theta(x)|^{w_k},
                 a_k(x)\eta(x)^{w_k} \}\\ 
                 &\sim&     \xi_k(q)\ .
\end{eqnarray*}
Similarly
\begin{eqnarray*}
\xi_k(z) &\sim&
\max\{ (a_k(z_1)|\tau(z)\theta(z_1)|^{w_k},
                 a_k(z_1)\eta(z_1)^{w_k} \} \\
                 &\gtrsim&
t^{\mu_k}\max\{ (a_k(x)|\tau(q)\theta(x)|^{w_k},
                 a_k(x)\eta(x)^{w_k} \} \\
                &\sim&    
                t^{\mu_k} \xi_k(q)\ .
\end{eqnarray*}

\textit{Proof of the estimate (\ref{xi_ineq_2}).}
Assume that $\xi_j$ is continuous near $0$ and $\xi_j(0)=0$.
Note first that we may assume that $\xi_j(q):=d(q,0)$.
Indeed, according to the \L ojasiewicz inequality (Theorem 2.6.6 \cite{BCR}) there is a continuous semialgebraic function $\tilde\xi_j$
and $L>0$ such that 
$$ \xi_j(q)=\tilde\xi_j(q)d(q,0)^L. $$
We may assume 
that $\tilde\xi_j$ is one of the functions $\xi_i$ (by including it to begin with in the original collection)  and then (\ref{xi_ineq_1}) would imply  
$\tilde\xi_j(r_t(q))\lesssim\tilde\xi_j(q)$. Therefore $d(r_t(q),0)\lesssim t^{\lambda_d}d(q,0)$ for some $\lambda_d>0$ would imply
$$\xi_j(r_t(q))\lesssim t^{\lambda_d L}\xi_j(q). $$

Assume $C$ is a cell in $X$.\\
{\it Case 1:} $C$ is a graph of $\theta_j$ over $C'$. Then since $X\subset C_{n+1}(e_1,M)$
it follows that $\theta_j(0)=0$.
Note that $d(q,0)\sim d(x,0)$. Indeed, assume $L_j>0$ is the Lipschitz constant of $\theta_j$
$$ d(q,0) \sim d(x,0) + |\theta_j(x)-\theta_j(0)|\leq (1+L_j)d(x,0).$$
On the other hand,
$ d(q,0) \geq d(x,0)$. Therefore,  $d(q,0)\sim d(x,0)$, as we claimed.
Since the distance to zero function $x\mapsto d(x,0)$,  is in $\mathcal{G}$  the induction hypothesis implies that 
$d(r'_t(x),0)\lesssim t^{\lambda_{d'}} d(x,0) .$
Therefore,
$$ d(r_t(q),0) \sim d(r'_t(x),0)\lesssim t^{\lambda_{d'}} d(x,0) \sim t^{\lambda_{d'}} d(q,0) .$$
{\it Case 2:} Assume that $C$ is a band bounded by the graphs of $\theta_{j}<\theta_{j+1}$ over $C'$.
%
%
%
%
%
%
%
%
%
Note that 
$ d(q,0)\leq d(x,0)+ |y|$. But $|y|\leq C d(x,0)$ since $q\in C_{n+1}(e_1,M)$ and hence
$d(q,0)\lesssim d(x,0)$.
On the other hand  $d(q,0)\geq d(x,0)$ and therefore, $d(q,0)\sim d(x,0)$.
Our proof of the remainder of the estimate is as in case 1 .

\textit{Proof of the estimate (\ref{r_ineq_1}).}
Note that $\det Dr_t$ is well defined over each cell $C$. Indeed, 
every $C$ is constructed iteratively as a graph or a band over 
a cell in a lower dimension. Assume $x_C:=(x_{j_1},\dots,x_{j_k})$ are the coordinates 
on the band $C$. Then the remaining coordinates of $\R^{n+1}$ are Lipschitz semialgebraic functions of the coordinates $x_C$. 
Let $\pi : C \to D$ be the projection onto the $x_C$ coordinate subspace on $\R^{n+1}$
and $\phi:D\to C$ be its inverse. Then $\det Dr_t:=\det D\left(\pi\circ r_t\circ\phi\right)$.

Assume $C$ is a graph of $\theta_j$ over a cell $C'$.
Then $\det Dr_t = \det Dr'_t$ and therefore (\ref{r_ineq_1}) holds by
the inductive assumption.

Now assume $C$ is a band bounded by the graphs of $\theta_j$ and $\theta_{j+1}$ over
a cell $C'$. 
Then 
$$ r_{t,{n+1}} (q) = \theta_j(r'_t(x)) + \frac{(y-\theta_j(x))(\theta_{j+1}(r'_t(x))-\theta_{j}(r'_t(x)))}
{\theta_{j+1}(x)-\theta_{j}(x)}\ . $$
Note, that due to the iterative definition of $r$ as the standard lifts from the lower dimensions
the matrix $Dr_t$ is lower triangular. In particular,
$$ |\det Dr_t| = \left|\det Dr'_t\right| \left|\frac{\pa r_{t,n+1}}{\pa y}\right| .$$
Finally, the inductive hypothesis implies 
$$ \left|\frac{\pa r_{t,n+1}(q)}{\pa y}\right| = \left|\frac{\theta_{j+1}(r'_t(x))-\theta_{j}(r'_t(x))}{\theta_{j+1}(x)-\theta_{j}(x)}\right|\gtrsim t^{\mu_{\theta_j}} $$
for some $\mu_{\theta_j}\geq 0$ and therefore, 
$$ |\det Dr_t| \gtrsim t^{\mu'+\mu_{\theta_j}}\ ,$$
as required. 

\textit{Proof of the estimate (\ref{r_ineq_2}).}
It suffices to show that 
$$\left|\frac{\pa r_{t,n+1}}{\pa x_j}\right|\lesssim t^\lambda $$
and, also, that
$$\left|\frac{\pa r_{t,n+1}}{\pa y}\right|\lesssim t^\lambda $$
for some $\lambda>0$ . When $C$ is a graph of $\theta_j$ over $C'$ it follows due to the construction of the standard lift
$$ r_{t,n+1}(q) = \theta_j(r'_t(x))\ .$$
(and, in particular, does not depend on $y$). Therefore,
$$\left|\frac{\pa r_{t,n+1}}{\pa x_l}\right| = 
\left|\sum_{i} \frac{\pa \theta_j(r'_t(x))}{\pa x_i}\frac{\pa r'_{t,i(x)}}{\pa x_l}  \right|.$$
By the induction hypothesis, $\left|\frac{r'_{t,i(x)}}{\pa x_l}  \right|\lesssim t^{\lambda'} $
for some $\lambda'>0$. 
Since $\theta_j$ is a Lipschitz semialgebraic function it follows $\left|\frac{\pa \theta_j(r'_t(x))}{\pa x_i} \right|\lesssim 1$ 
which completes the proof of (\ref{r_ineq_2}) in the case of $C$ being a graph over $C'$.

Now assume $C$ is a band bounded by graphs of $\theta_j$ and $\theta_{j+1}$ over $C'$.
Define $\theta(x):=\theta_{j+1}(x)-\theta_j(x)$.
Then 
$$ r_{t,n+1}(q) = \theta_{j}(r'_t(x))+\left(y-\theta_j(x)\right)\frac{\theta(r'_t(x))}{\theta(x)}\ .$$

The latter and the inductive assumption imply that  
$$ \left|\frac{\pa r_{t,n+1}}{\pa y}\right| = \left|\frac{\theta(r'_t(x))}{\theta(x)}\right|\lesssim t^{\lambda_{\theta}} $$
and that
\begin{eqnarray*}
\left|\frac{\pa r_{t,n+1}}{\pa x_l}\right| &\leq&
\left|\sum_{i} \frac{\pa \theta_j(r'_t(x))}{\pa x_i}\frac{\pa r'_{t,i(x)}}{\pa x_l}  \right|
+ \left|\frac{\pa \theta_j(x)}{\pa x_l}\frac{\theta(r'_t(x))}{\theta(x)} \right| + \\
&+& 
\left(y-\theta_j(x)\right)\left|\frac{\theta(x) \sum_{i} \frac{\pa \theta(r'_t(x))}{\pa x_i}\frac{\pa r'_{t,i(x)}}{\pa x_l}-\theta(r'_t(x))\frac{\pa\theta(x)}{\pa x_l}}{\theta^2(x)} \right|.
\end{eqnarray*}
Of course since $C\subset X$ is in a cone it follows $\theta_j(0)=\theta_{j+1}(0)=0$
 and $0\leq y-\theta_j(x)\leq \theta(x)$ for $(x,y)\in C$.
Finally, the induction hypothesis implies
\begin{eqnarray*}
\left(y-\theta_j(x)\right)\left|\frac{\theta(x) \sum_{i} \frac{\pa \theta(r'_t(x))}{\pa x_i}\frac{\pa r'_{t,i(x)}}{\pa x_l}-\theta(r'_t(x))\frac{\pa\theta(x)}{\pa x_l}}{\theta^2(x)} \right| &\leq&
\left|{ \sum_{i} \frac{\pa \theta(r'_t(x))}{\pa x_i}\frac{\pa r'_{t,i(x)}}{\pa x_l}}\right|\\ &+& \left|\frac{\theta(r'_t(x))\frac{\pa\theta(x)}{\pa x_l}}{\theta(x)} \right|\\
&\lesssim& t^{\lambda'} + t^{\lambda_{\theta}}
\end{eqnarray*}
for some $\lambda_\theta>0$ and (\ref{r_ineq_2}) follows
in the case of $C$ being a band over $C'$ as well,
 which completes the proof of Theorem \ref{retract}.
\end{proof}

\end{document}